\newtheorem{thm}{Theorem}[section]
\newtheorem{lem}[thm]{Lemma}
\newtheorem{de}[thm]{Definition}
\newtheorem{re}[thm]{Remark}
\newtheorem{cla}[thm]{Claim}
\soulregister{\em}{0}
\begin{document}
	\title{Sesqui-regular graphs with smallest eigenvalue at least $-3$}
	\author[a]{Qianqian Yang}
	\author[a]{Brhane Gebremichel}
	\author[b]{Masood Ur Rehman}
	\author[c]{Jae Young Yang}
    \author[a,d]{Jack H. Koolen\thanks{Corresponding author}}
	\affil[a]{\footnotesize{School of Mathematical Sciences, University of Science and Technology of China, 96 Jinzhai Road, Hefei, 230026, Anhui, PR China}}
	\affil[b]{\footnotesize{Department of Basic Sciences, Balochistan University of Engineering and Technology Khuzdar, Khuzdar 89100, Pakistan}}
	\affil[c]{\footnotesize{Samsung SDS, Olympic-ro 35-gil 125, Songpa-gu, Seoul, 05510, Republic of Korea}}
	\affil[d]{\footnotesize{CAS Wu Wen-Tsun Key Laboratory of Mathematics, University of Science and Technology of China, 96 Jinzhai Road, Hefei, Anhui, 230026, PR China}}
	
	\maketitle
	\pagestyle{plain}
	
	\newcommand\blfootnote[1]{%
		\begingroup
		\renewcommand\thefootnote{}\footnote{#1}%
		\addtocounter{footnote}{-1}%
		\endgroup}
	\blfootnote{2010 Mathematics Subject Classification. 05C50, 05C62, 05C75, 11H99} 
	\blfootnote{E-mail addresses:  {\tt qqyang91@ustc.edu.cn} (Q. Yang),  {\tt brhaneg220@mail.ustc.edu.cn} (B. Gebremichel), {\tt masoodqau27@gmail.com} (M.U. Rehman), {\tt piez@naver.com} (J.Y. Yang), {\tt koolen@ustc.edu.cn} (J.H. Koolen).}

\begin{abstract}
	Koolen et al. showed that if a graph with smallest eigenvalue at least $-3$ has large minimal valency, then it is $2$-integrable. In this paper, we will focus on the sesqui-regular graphs with smallest eigenvalue at least $-3$ and study their integrability. 
\end{abstract}

\section{Introduction}
In this paper, all graphs considered are finite, undirected and simple, and the eigenvalues of graphs we mean are the eigenvalues of their adjacency matrix. 

Let $s$ be a positive integer and $\Gamma$ a graph with smallest eigenvalue $\lambda_{\min}(\Gamma)$. We say that the graph $\Gamma$ is \emph{$s$-integrable}, if for each vertex $x$ of $\Gamma$, there exists an integral vertex $\mathbf{x}$ such that the following equations hold:
\begin{equation}\label{def: s-integrable}
(\mathbf{x},\mathbf{y})=\left\{
\begin{array}{ll}
	s\lceil-\lambda_{\min}(\Gamma)\rceil & \text{ if }x=y, \\
	s & \text{ if $x$ is adjacent to $y$}, \\
	0 & \text{ otherwise},
\end{array}
\right.	
\end{equation}
where $(,)$ denotes the standard inner product. Note that if a graph is $s$- and $t$-integrable, then it is also $(s+t)$-integrable.

In 1976,  Cameron et al. \cite{cameron} showed that: 
\begin{thm}\label{thm:cameron}
Any connected graph $\Gamma$ with smallest eigenvalue $\lambda_{\min}(\Gamma)\geq-2$ is $s$-integrable for any integer $s\geq2$. Moreover, if $\Gamma$ has more than $36$ vertices, then $\Gamma$ is $1$-integrable.  
\end{thm}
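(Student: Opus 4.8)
The plan is to run the classical theory of root-lattice representations of graphs with smallest eigenvalue at least $-2$. Write $A=A(\Gamma)$. If $\lambda_{\min}(\Gamma)\ge-1$, then $A+I$ is positive semidefinite and $\Gamma$ contains no induced path on three vertices, so $\Gamma$ is a complete graph (or a single vertex), and then taking all the $\mathbf{x}$ equal to one fixed integer vector of norm $s\lceil-\lambda_{\min}(\Gamma)\rceil$ settles both assertions. So assume $-2\le\lambda_{\min}(\Gamma)<-1$, i.e.\ $\lceil-\lambda_{\min}(\Gamma)\rceil=2$, so that \eqref{def: s-integrable} asks for integer vectors of norm $2s$ with inner product $s$ along edges and $0$ off edges. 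Now $A+2I$ is positive semidefinite with integer entries, diagonal $2$ and off-diagonal entries in $\{0,1\}$, so it is the Gram matrix of vectors $\{v_x\}_{x\in V(\Gamma)}$ with $(v_x,v_x)=2$, $(v_x,v_y)=1$ if $x\sim y$ and $(v_x,v_y)=0$ otherwise. They generate a lattice $L=L(\Gamma)$, well defined up to isometry and positive definite on its span; since every cross-term comes in a symmetric pair, every vector of $L$ has even norm, so $L$ is generated by vectors of norm $2$. By the classification of root lattices, $L$ is an orthogonal direct sum of copies of $A_n\ (n\ge1)$, $D_n\ (n\ge4)$, $E_6$, $E_7$, $E_8$. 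I will use the standard facts that $A_n$ and $D_n$ embed isometrically into $\mathbb Z^{n+1}$ and $\mathbb Z^{n}$ respectively, via the roots $e_i-e_j$, resp.\ $\pm e_i\pm e_j$, while $E_8$ (being even and unimodular) does not embed isometrically into any $\mathbb Z^N$, and the same holds for $E_6$ and $E_7$.

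For the second assertion, suppose $L(\Gamma)$ has no exceptional summand; equivalently, $\Gamma$ is a generalized line graph, i.e.\ it admits a representation $\{w_x\}$ by vectors in some $D_n\subseteq\mathbb Z^n$ with the same Gram matrix $A+2I$. Then $\{w_x\}$ is an integral family witnessing that $\Gamma$ is $1$-integrable. A connected graph with $\lambda_{\min}\ge-2$ that is not a generalized line graph is called \emph{exceptional}, and the key input here is the classification of these graphs: each is an induced subgraph of one of finitely many maximal exceptional graphs, and each of the latter has at most $36$ vertices. Hence if $|V(\Gamma)|>36$ then $\Gamma$ is a generalized line graph, so it is $1$-integrable.

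For the first assertion, using that $s$- and $t$-integrability imply $(s+t)$-integrability, it suffices to prove $2$-integrability and $3$-integrability for every connected graph with $\lambda_{\min}\ge-2$. Write $\sqrt{s}\,L$ for the lattice $L$ with its form scaled by $s$. If $\sqrt{s}\,L(\Gamma)$ embeds isometrically into some $\mathbb Z^N$ by a map $\iota$, then $\{\iota(\sqrt{s}\,v_x)\}$ is an integral family with Gram matrix $s(A+2I)$, so $\Gamma$ is $s$-integrable. For summands of type $A_n$ or $D_n$ this is elementary: such a summand sits in some $\mathbb Z^m$, and $\sqrt{s}\,\mathbb Z^m$ embeds into $\mathbb Z^{4m}$ by sending each $\sqrt{s}\,e_i$ into a fresh block of coordinates carrying an integer vector of norm $s$, which exists by Lagrange's four-square theorem (and for $s\in\{2,3\}$ only two, resp.\ three, coordinates are needed). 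Since $E_6$ and $E_7$ are sublattices of $E_8$, the remaining task is to embed $\sqrt{2}\,E_8$ and $\sqrt{3}\,E_8$ isometrically into some $\mathbb Z^N$; this I would do by writing down integer vectors $\beta_1,\dots,\beta_8$ with $\beta_i\cdot\beta_j=s\,(\alpha_i,\alpha_j)$ for the simple roots $\alpha_i$ of $E_8$, constructed coordinate-block by coordinate-block, the rank-two prototype being $\sqrt{2}\,A_2\hookrightarrow\mathbb Z^4$ via $(2,0,0,0)$ and $(-1,1,1,1)$. Combining the summands gives $\sqrt{s}\,L(\Gamma)\hookrightarrow\mathbb Z^N$ for $s\in\{2,3\}$, hence $s$-integrability for all $s\ge2$.

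The two substantive inputs are the classification of the maximal exceptional graphs, which is what produces the numerical bound $36$, and the explicit integral realizations of $\sqrt{2}\,E_8$ and $\sqrt{3}\,E_8$. The latter is where the real constraint sits: it is exactly the non-embeddability of $E_8$ into any $\mathbb Z^N$ that blocks $1$-integrability in general and forces the hypothesis $s\ge2$, and the vectors $\beta_i$ must be chosen so that every inner product prescribed by the $E_8$ Cartan matrix is met exactly. This is the only step of the argument that is genuinely computational rather than structural.
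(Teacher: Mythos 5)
This statement is Theorem~\ref{thm:cameron}, which the paper imports from \cite{cameron} without proof, so there is no in-paper argument to compare against; your proposal is, in outline, exactly the classical Cameron--Goethals--Seidel--Shult root-lattice argument, and the structural skeleton is correct: the reduction of $\lambda_{\min}\geq-1$ to complete graphs, the representation of $A+2I$ by norm-$2$ vectors generating an even root lattice, the dichotomy between $D_n$-representable (generalized line) graphs and exceptional graphs, the additivity reducing all $s\geq2$ to $s\in\{2,3\}$, and the block-substitution embedding of $\sqrt{s}\,\mathbb{Z}^m$ are all sound. Two remarks on the inputs you invoke. First, the bound $36$ does not need the classification of maximal exceptional graphs (a much later and heavier result): it is already in the 1976 paper that a set of norm-$2$ vectors in $E_8$ with pairwise inner products in $\{0,1\}$ has at most $36$ elements, which bounds any exceptional graph directly. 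Second, since $\Gamma$ is connected, the lattice $L(\Gamma)$ is a single irreducible root lattice rather than a genuine orthogonal sum, though this does not affect your argument.

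The one genuine gap is the step you yourself flag: the isometric embeddings $\sqrt{2}\,E_8\hookrightarrow\mathbb{Z}^N$ and $\sqrt{3}\,E_8\hookrightarrow\mathbb{Z}^N$ are asserted as something you ``would do,'' but never exhibited, and for the exceptional case this is the entire non-formal content of the first assertion --- everything else in your proof is bookkeeping around it. These embeddings do exist (for $s=2$ one clean realization is Construction~A applied to the extended $[8,4,4]$ Hamming code, which produces a sublattice of $\mathbb{Z}^8$ isometric to $\sqrt{2}\,E_8$; for $s=3$ an explicit Gram realization of $3C_{E_8}$ by integer vectors must likewise be written down or cited), but as the proposal stands this step is a promissory note rather than a proof, and it is precisely the step that fails for $s=1$, so it cannot be waved through on general grounds. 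Supplying those eight vectors for each of $s=2,3$, or an explicit citation to where they are tabulated, would close the argument.
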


Later in 2018, Koolen et al. \cite{kyy3} studied the graphs with smallest eigenvalue at least $-3$ and proved that:
\begin{thm}
	There exists a positive constant $\kappa$ such that any graph $\Gamma$ with smallest eigenvalue $\lambda_{\min}(\Gamma)\geq-3$ and minimal valency $k_{\min}(\Gamma)\geq\kappa$ is $s$-integrable, for any integer $s\geq2$. 
\end{thm}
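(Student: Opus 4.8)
\quad
The plan is to recast $s$-integrability as a lattice-embedding statement and then to control the relevant lattice by the structure that a large minimal valency forces on $\Gamma$; this is handled uniformly in $s\ge 2$, and alternatively, by the remark that $s$- and $t$-integrability imply $(s+t)$-integrability, it suffices to treat $s=2$ and $s=3$.

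\emph{Reformulation.} We may assume $-3\le\lambda_{\min}(\Gamma)<-2$, since Theorem~\ref{thm:cameron} already settles $\lambda_{\min}(\Gamma)\ge -2$. Then $M:=A(\Gamma)+3I$ is positive semidefinite, so $\Gamma$ has a representation $x\mapsto v_x\in\mathbb R^{n}$ with $(v_x,v_x)=3$, $(v_x,v_y)=1$ for $x\sim y$ and $(v_x,v_y)=0$ otherwise; and $\Gamma$ is $s$-integrable precisely when the integral matrix $sM$ is the Gram matrix of a family of vectors of a standard lattice $\mathbb Z^{N}$, equivalently when the integral lattice with Gram matrix $sM$ on suitable generators embeds isometrically into some $\mathbb Z^{N}$. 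The aim is to show that, once $k_{\min}(\Gamma)\ge\kappa$, this lattice splits as $\mathbb Z^{m}\perp L_{0}$ with $L_{0}$ lying in a fixed finite list of lattices, each of which, after its form is scaled by any integer $s\ge 2$, embeds into some $\mathbb Z^{N_{0}}$; this is the $-3$ counterpart of the split, underlying Theorem~\ref{thm:cameron}, between the ``free'' root lattices $A_{n},D_{n}$ (which embed into $\mathbb Z^{N}$ already) and the bounded exceptional $E$-type part.

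\emph{Structural reduction.} First I would use the large minimal valency to show that $\Gamma$ is geometric: by a Hoffman-bound and interlacing argument of Neumaier--Metsch type every edge of $\Gamma$ lies in a large clique, these cliques can be organised into a family covering each edge exactly once, and every vertex meets only a bounded number of them, the bound depending only on $\lambda_{\min}$. In the language of Hoffman graphs the target is: when $k_{\min}(\Gamma)$ is large enough, $\Gamma$ is the slim graph of an $\mathcal H$-line graph for a fixed finite family $\mathcal H$ of maximal Hoffman graphs with smallest eigenvalue at least $-3$. In this picture the generic part of the amalgamation yields the free summand $\mathbb Z^{m}$, while each member of $\mathcal H$ contributes only a bounded-rank piece to the exceptional summand $L_{0}$.

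\emph{Assembling the representation, and the main difficulty.} It then remains to (i) check by a finite computation that every member of $\mathcal H$ is $s$-integrable for all $s\ge 2$ --- the input being the lattice fact, already behind Theorem~\ref{thm:cameron}, that the relevant even lattices embed isometrically into a standard lattice once their form is scaled by $2$ (and by $3$) --- and to (ii) show that $s$-integrability is preserved by the amalgamation building $\Gamma$ from copies of members of $\mathcal H$: two glued pieces overlap in a clique-like configuration whose integral representation is rigid enough that, after a suitable signed permutation of $\mathbb Z^{N}$, the two pieces can be made to agree on the overlap and merged, and iterating over all pieces produces integral vectors for all of $\Gamma$ simultaneously. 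The hard part is the structural reduction: showing that a large minimal valency genuinely confines a graph with $\lambda_{\min}(\Gamma)\ge -3$ to the finite family $\mathcal H$ demands tight control of how many fat neighbours (near-cliques) a vertex can have and of why the exceptional local configurations are finitely many, and it is here that the precise bound $-3$ is essential. A secondary obstacle is the compatibility in (ii): at scaling $1$ the exceptional summands need not embed into $\mathbb Z^{N}$ at all and the local representations need not align, which is exactly why the conclusion is $s$-integrability for $s\ge 2$ rather than $1$-integrability, the factor $\ge 2$ being precisely what removes both obstructions.
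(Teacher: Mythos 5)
A preliminary point: the present paper does not prove this theorem --- it is imported from \cite{kyy3} --- so your proposal has to be measured against the argument there and against the Hoffman-graph machinery the authors redeploy in Section \ref{proof the 1-integrability}. Measured that way, your strategy is the correct one and matches the known proof in outline: reduce to $\lambda_{\min}(\Gamma)\in[-3,-2)$ via Theorem \ref{thm:cameron}; recast $s$-integrability as the statement that $s(A(\Gamma)+3I)$ is a Gram matrix over $\mathbb{Z}^N$; note that $s=2$ and $s=3$ suffice by additivity of integrability; invoke a structure theorem saying that large minimal valency forces $\Gamma$ to be the slim graph of a fat Hoffman graph with smallest eigenvalue at least $-3$ built from a bounded family of pieces (this is exactly the role of Theorem \ref{Hoffman graph} here, and of its source in \cite{kyy3}); and finally observe that the ``generic'' part of the associated lattice is already standard while the exceptional part is of bounded rank and becomes a sublattice of $\mathbb{Z}^N$ after scaling by $2$ or $3$, in analogy with the $A_n/D_n$ versus $E_8$ dichotomy behind Theorem \ref{thm:cameron}.

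That said, what you have written is a roadmap rather than a proof: every step that carries the actual difficulty is stated as a goal and then deferred. Concretely, three things are missing. (i) The structural reduction itself --- that there is a $\kappa$ beyond which every edge lies in a controlled quasi-clique, that these quasi-cliques can be organised into a fat Hoffman graph with $\lambda_{\min}\geq-3$ whose indecomposable factors come from a \emph{finite} list of bounded size --- is the main theorem of the associated-Hoffman-graph theory and occupies most of \cite{kyy3}; asserting ``a Hoffman-bound and interlacing argument of Neumaier--Metsch type'' does not supply it, and in particular the finiteness of the exceptional family at level $-3$ (the analogue of ``only $E_6,E_7,E_8$'') is precisely what must be proved. (ii) The lattice input --- that each exceptional indecomposable summand, after its form is scaled by $2$ (and by $3$), embeds isometrically into a standard lattice --- is asserted as ``a finite computation'' but never tied to a finite list, and without (i) there is no finite list to compute over. (iii) The compatibility of local integral representations under the sum decomposition is genuinely delicate (in the present paper it is handled by Definition \ref{directsummatrix}, Lemma \ref{combi}, Lemma \ref{rela} and Theorem \ref{HJAT 1-integrable}, which let one glue \emph{reduced} representations factor by factor); your appeal to ``a suitable signed permutation of $\mathbb{Z}^N$'' aligning overlaps is exactly the step that can fail and that the reduced-representation formalism is designed to circumvent. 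So the approach is sound and is the one the literature takes, but as it stands the proposal contains no proof of any of the three load-bearing steps.
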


In this paper, we will continue the work in \cite{kyy3} and study the integrability of a class of special graphs with smallest eigenvalue at least $-3$. 

Suppose that $\Gamma$ is a graph with vertex set $V(\Gamma)$. For any two vertices $x$ and $y$, if $x$ and $y$ are adjacent, we write
$x\sim y$, and $x\not\sim y$ otherwise. Let $x$ be a vertex of $\Gamma$. We denote by $\Gamma_i(x)$ the set of vertices at distance $i$ from $x$. If a vertex $y$ is in $\Gamma_i(x)$, then we denote the distance between $x$ and $y$ by $d(x,y)$ and write $d(x,y)=i$.
 If $i=1$, we say the value $|\Gamma_1(x)|$ the \emph{valency} of $x$. Whenever the valency of every vertex of $\Gamma$ is the same, $k$ say, $\Gamma$ is also said to be \emph{regular} of valency $k$ or \emph{$k$-regular}.  

A regular graph $\Gamma$ with $n$ vertices and valency $k$ is \emph{sesqui-regular} with parameters $(n,k,c)$, if any two vertices at distance $2$ have exactly $c$ common neighbors.

A regular graph $\Gamma$ with $n$ vertices and valency $k$ is \emph{strongly regular} with parameters $(n,k,a,c)$, if any two adjacent vertices have exactly $a$ common neighbors and any two distinct non-adjacent vertices have exactly $c$ common neighbors. Here is a family of important strongly regular graphs constructed from Steiner triple systems.

A \emph{Steiner triple system} $STS(v)$ is a pair $(\mathcal{P},\mathcal{B})$, where $\mathcal{P}$ is a
$v$-element set called a \emph{point set} and $\mathcal{B}$ is a family of $3$-element subsets of $\mathcal{P}$ called a \emph{block set} such that each $2$-element subset of $\mathcal{P}$ is contained in exactly one block. Take a Steiner triple system $STS(v)$. (A necessary and sufficient condition for the existence of an $STS(v)$ is that $v\equiv 1,3\pmod 6$ \cite{Kirkman}.) Construct a graph as follows: its vertex set is the block set of this Steiner triple system, and two blocks are adjacent whenever they intersect in one point. This gives a strongly regular graph with parameters $(\frac{v(v-1)}{6},\frac{3(v-3)}{2},\frac{v+3}{2}, 9)$ (see \cite[p.~5]{Cioaba.2014}).
We call this graph the \emph{block graph} of the Steiner triple system. It is not hard to see that this graph is $1$-integrable with smallest eigenvalue $-3$.

Recently, Koolen et al. showed the following result for sesqui-regular graphs with large minimal valency and fixed smallest eigenvalue.

\begin{thm}[{cf.~\cite[Theorem 1.5]{Koolen.2019}}]\label{kgyy}
Let $\lambda \geq 2$ be an integer. There exists a real number $K_1(\lambda)$ such that, for any connected sesqui-regular graph $\Gamma$ with parameters $(v, k, c)$ and smallest eigenvalue $\lambda_{\min}(\Gamma)\geq-\lambda$, if $k \geq K_1(\lambda)$, then either $c\leq \lambda^2(\lambda-1)$ or $v-k-1\leq \frac{(\lambda -1)^2}{4}+1$ holds.
\end{thm}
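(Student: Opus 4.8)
The plan is to exploit the positive-semidefiniteness of $A(\Gamma)+\lambda I$ together with eigenvalue interlacing, after passing to the complement. Since $\lambda_{\min}(\Gamma)\geq-\lambda$ is equivalent to $A(\Gamma)+\lambda I\succeq 0$, and since $A(\overline{\Gamma})=J-I-A(\Gamma)$ so that on the space orthogonal to the all-ones vector the eigenvalues of $\overline{\Gamma}$ are the numbers $-1-\theta$ with $\theta$ an eigenvalue of $\Gamma$ there, the hypothesis says exactly that the second largest eigenvalue of $\overline{\Gamma}$ is at most $\lambda-1$; note also that $v-k-1$ is the valency of $\overline{\Gamma}$. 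Fix a vertex $x$; if $\Gamma$ is complete then $v-k-1=0$ and there is nothing to prove, so assume $\Gamma_2(x)\neq\emptyset$. I will show that if $c\geq\lambda^{2}(\lambda-1)+1$ then $v-k-1\leq\frac{(\lambda-1)^{2}}{4}+1$, provided $k\geq K_1(\lambda)$ for a suitable $K_1(\lambda)$, splitting into the cases $\mathrm{diam}(\Gamma)=2$ and $\mathrm{diam}(\Gamma)\geq 3$.

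As a warm-up estimate I would apply interlacing to the induced subgraph $\Gamma'=\Gamma[\{x\}\cup\Gamma_1(x)\cup\Gamma_2(x)]$, whose smallest eigenvalue is still $\geq-\lambda$, with the partition $\{x\},\,\Gamma_1(x),\,\Gamma_2(x)$: in $\Gamma'$ every vertex of $\Gamma_2(x)$ has exactly $c$ neighbours in $\Gamma_1(x)$ and at most $k-c$ in $\Gamma_2(x)$, so the symmetrized quotient matrix has $(2,3)$-entry $c\sqrt{|\Gamma_2(x)|/k}$, and non-negativity of the $\{2,3\}$ principal $2\times 2$ minor of that matrix plus $\lambda I$ gives $|\Gamma_2(x)|\leq k(k-1+\lambda)(k-c+\lambda)/c^{2}$. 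This is far from the stated bound, and the core of the argument is to improve it. Suppose first $\mathrm{diam}(\Gamma)=2$. If $\overline{\Gamma}$ is disconnected then, being regular, it has its valency as an eigenvalue of multiplicity at least two, forcing $v-k-1\leq\lambda-1$ and we are done; so assume $\overline{\Gamma}$ is connected. Inclusion–exclusion on neighbourhoods then shows that sesqui-regularity of $\Gamma$ forces $\overline{\Gamma}$ to be an edge-regular graph with parameters $(v,\ v-k-1,\ c-k-1+(v-k-1))$, so the question becomes how large the valency of a connected edge-regular graph with second largest eigenvalue at most $\lambda-1$ can be. I would control this by a clique-extension dichotomy: for any two vertices $u,w$ of $\Gamma$ at distance $2$, an independent subset of size $m$ of their common-neighbour set induces a $K_{2,m}$, of smallest eigenvalue $-\sqrt{2m}$, so the common-neighbour graph of every distance-$2$ pair has independence number at most $\lambda^{2}/2$, whereas a clique of $\Gamma$ has at most $1+k/\lambda$ vertices by the Hoffman bound. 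Feeding this into the edge-regularity of $\overline{\Gamma}$ and the warm-up estimate — that is, analysing the extremal situation in which such a common-neighbour graph is a disjoint union of a bounded number of cliques and computing the smallest eigenvalue of the resulting clique-extension exactly — should force $c\leq\lambda^{2}(\lambda-1)$ unless $\Gamma$ collapses to the near-complete regime $v-k-1\leq\frac{(\lambda-1)^{2}}{4}+1$, with the precise constants coming out of that eigenvalue computation.

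For $\mathrm{diam}(\Gamma)\geq 3$ there is extra rigidity. If $d(x,z)=3$ and $y\in\Gamma_1(x)$ lies on a shortest path from $x$ to $z$, then $d(y,z)=2$ and every common neighbour of $y$ and $z$ must lie in $\Gamma_2(x)$, so $v-k-1\geq|\Gamma_2(x)|\geq c$. Thus when $c\geq\lambda^{2}(\lambda-1)+1$ the second alternative already fails, and what remains is to prove that a sesqui-regular graph of diameter at least $3$ with $c\geq\lambda^{2}(\lambda-1)+1$ and $\lambda_{\min}\geq-\lambda$ has valency bounded by a function of $\lambda$ only; I would again combine the independence-number bound on common-neighbour graphs with the Hoffman clique bound, now also using the forced clique structure around the distance-$3$ vertex (and, where convenient, available structural results on graphs with smallest eigenvalue at least $-\lambda$ and large minimal valency), with the aim of showing that in this case in fact $c\leq\lambda^{2}(\lambda-1)$.

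The main obstacle I foresee is the sharp dichotomy in the diameter-$2$ case: getting \emph{some} bound of the shape ``$c$ bounded or $v-k-1$ bounded'' out of the independence-number estimate is routine, but reaching exactly $\lambda^{2}(\lambda-1)$ and $\frac{(\lambda-1)^{2}}{4}+1$ requires identifying the precise extremal configurations (clique-extensions of small graphs), determining exactly when their smallest eigenvalue equals $-\lambda$, and then ruling out everything strictly between the two alternatives; carrying the diameter-$\geq 3$ analysis through uniformly inside the same framework is the other delicate point.
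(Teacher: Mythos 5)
First, a remark on the comparison you asked for implicitly: Theorem \ref{kgyy} is not proved in this paper at all --- it is imported verbatim from \cite[Theorem 1.5]{Koolen.2019}, so there is no in-paper argument to measure your attempt against. The proof in that reference goes through the Hoffman-graph machinery (associating a fat Hoffman graph to $\Gamma$ for large $k$ and analysing its quasi-cliques), i.e.\ the same toolkit this paper deploys in Section \ref{proof the 1-integrability}, rather than the elementary interlacing-plus-complement scheme you propose.

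On the merits, your write-up is a strategy rather than a proof, and the gap sits exactly at the step that constitutes the theorem. The ingredients you assemble are individually correct: the translation to the second largest eigenvalue of $\overline{\Gamma}$, the quotient-matrix estimate $|\Gamma_2(x)|\leq k(k-1+\lambda)(k-c+\lambda)/c^2$, the edge-regularity of $\overline{\Gamma}$ when $\mathrm{diam}(\Gamma)=2$, the bound $\alpha\leq\lambda^2/2$ on independent sets in $\Gamma_1(u)\cap\Gamma_1(w)$ via the induced $K_{2,m}$, and the observation that $\mathrm{diam}(\Gamma)\geq3$ forces $v-k-1\geq c$. But none of these, alone or combined as described, produces either constant $\lambda^2(\lambda-1)$ or $\frac{(\lambda-1)^2}{4}+1$; you yourself flag the decisive dichotomy with ``should force'' and ``what remains is to prove.'' Concretely, the clique-cover step does not bound $c$: knowing that the common neighbourhood of a distance-$2$ pair is covered by at most $\lambda^2/2$ cliques is useless without a bound on the sizes of those cliques, and the only clique bound you invoke, the ratio bound $1+k/\lambda$, grows linearly in $k$ --- so $c$ could a priori be as large as $\tfrac{\lambda^2}{2}(1+\tfrac{k}{\lambda})$, which is unbounded. (Indeed $c$ close to $k$ genuinely occurs, as in Theorem \ref{c equals k or k-1}, so any bound on $c$ must come from a structural argument that excludes large cliques through the common neighbourhood, which is precisely what the Hoffman-graph analysis in \cite{Koolen.2019} supplies and what is missing here.) The diameter-$\geq3$ case has the same status: the reduction to ``valency bounded by a function of $\lambda$ only'' is plausible but entirely unproved. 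So the proposal identifies a reasonable framework but does not close the argument.
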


In the present paper, we will improve Theorem \ref{kgyy} as follows for sesqui-regular graphs with smallest eigenvalue at least $-3$.

\begin{thm}\label{new}
	There exists a positive constant $K_2~(\geq11)$ such that, for any connected sesqui-regular graph $\Gamma$ with parameters $(n,k,c)$, where $c\geq9$, and smallest eigenvalue $\lambda_{\min}(\Gamma)\in[-3,-2)$, if $k\geq K_2$, then one of the following holds:
	\begin{enumerate}
		\item $c=9$ and $\Gamma$ is the block graph of a Steiner triple system;
		\item $c=k-1$ and $\Gamma$ is the complement of the disjoint union of cycles with length at least $4$;
		\item $c=k$ and $\Gamma$ is the complete multipartite graph $K_{\frac{n}{3}\times 3}$, that is, the complement of the disjoint union of cycles with length $3$.
	\end{enumerate}
In particular, in all these three cases, $\Gamma$ is $1$-integrable.
\end{thm}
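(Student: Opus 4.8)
The plan is to reduce, via Theorem~\ref{kgyy}, to two tractable regimes and treat each separately: the ``thin complement'' regime $n-k-1\le 2$ by elementary arguments, and the ``bounded $c$'' regime by the structure theory for graphs with smallest eigenvalue at least $-3$ from \cite{kyy3,Koolen.2019}.

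\textbf{Step 1: the dichotomy.} I would take $K_2\ge\max\{11,\,K_1(3)\}$. Since $\Gamma$ is connected, $k$-regular and not complete ($K_{k+1}$ has smallest eigenvalue $-1\notin[-3,-2)$), we have $n\ge k+2$ and $\Gamma_2(x)\ne\emptyset$ for every vertex $x$. Applying Theorem~\ref{kgyy} with $\lambda=3$ gives: either $c\le 3^{2}(3-1)=18$, or $n-k-1\le\tfrac{(3-1)^2}{4}+1=2$, i.e.\ $n\in\{k+2,k+3\}$. These alternatives need not be exclusive, but their union is exhaustive, so it suffices to handle both.

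\textbf{Step 2: the case $n-k-1\le 2$.} If $n=k+2$ then $\overline\Gamma$ is a perfect matching, so $\Gamma=K_{\frac n2\times 2}$ has smallest eigenvalue $-2$, which is excluded. If $n=k+3$ then $\overline\Gamma$ is $2$-regular, hence a disjoint union of cycles of length $\ge 3$, and for $u\sim v$ in $\overline\Gamma$ (i.e.\ $d_\Gamma(u,v)=2$) the number of common $\Gamma$-neighbours of $u,v$ equals $n-3$ if the edge $uv$ lies in a triangle of $\overline\Gamma$ and $n-4$ otherwise. Sesqui-regularity then forces either every component of $\overline\Gamma$ to be a triangle---whence $c=k$ and $\Gamma=K_{\frac n3\times 3}$, case~(iii)---or every component to have length $\ge 4$---whence $c=k-1$ and $\Gamma$ is the complement of such a union, case~(ii). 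In case~(iii) one has $\lambda_{\min}(\Gamma)=-3$; in case~(ii), writing $\mu^\ast$ for the largest eigenvalue of $\overline\Gamma$ affording an eigenvector orthogonal to the all-ones vector, one gets $\lambda_{\min}(\Gamma)=-1-\mu^\ast$, and since $n=k+3\ge 14$ is large one checks $1<\mu^\ast\le 2$, so $\lambda_{\min}(\Gamma)\in[-3,-2)$; thus both alternatives are consistent with the hypotheses.

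\textbf{Step 3: the case $9\le c\le 18$.} This is the core. Using the structure theory for graphs with smallest eigenvalue at least $-3$ and large valency---in particular the geometric description underlying the proof of Theorem~\ref{kgyy} in the regime $c\le\lambda^2(\lambda-1)$, together with the Hoffman-graph machinery of \cite{kyy3}---I would show that $\Gamma$ admits a clique geometry: a family $\mathcal L$ of cliques (``lines''), each of size $\Theta(k)$, such that each edge of $\Gamma$ lies in a unique line, the neighbourhood of each vertex is covered by the lines through it, and---this being the force of $\lambda_{\min}(\Gamma)\ge -3$---each vertex lies on at most $3$ lines. Counting common neighbours of a pair $x,y$ with $d(x,y)=2$ over the (at most $3\times 3$) pairs ``a line on $x$, a line on $y$'', each contributing at most one common neighbour, yields $c\le 9$; combined with $c\ge 9$ this gives $c=9$. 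The resulting tightness forces each vertex to lie on exactly $3$ lines and any two lines to meet in exactly one vertex. Dualising---points $:=$ lines of $\mathcal L$, blocks $:=$ vertices of $\Gamma$, a vertex $x$ corresponding to the $3$-set of lines through it---one finds that any two points lie on exactly one block of size $3$, so $(\mathcal L,V(\Gamma))$ is a Steiner triple system $STS(|\mathcal L|)$, and two vertices of $\Gamma$ are adjacent iff the corresponding blocks meet. Hence $\Gamma$ is the block graph of this $STS$, which is case~(i) (and then $\lambda_{\min}(\Gamma)=-3$).

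\textbf{Step 4: integrability, and the main obstacle.} It remains to verify $1$-integrability, which I would do by exhibiting, for each of the three graphs, explicit integral vectors satisfying \eqref{def: s-integrable} with $s=1$ and $\lceil-\lambda_{\min}(\Gamma)\rceil=3$: for (i), the map $B=\{p,q,r\}\mapsto e_p+e_q+e_r\in\mathbb Z^{v}$ noted in the introduction; for (iii), the vertex indexed $(i,j)$ in part $i$ mapped to $e_0+e_{i,j}-e_{i,j+1}$ (second-coordinate subscripts modulo $3$, the coordinates $e_{i,\cdot}$ forming a separate copy of $\mathbb Z^{3}$ per part); for (ii), a vertex $v$ mapped to $e_0+e_v-e_{v^{+}}$, where $v^{+}$ is the successor of $v$ on its cycle in $\overline\Gamma$. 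In each case a short computation gives $(\mathbf x,\mathbf x)=3$ and $(\mathbf x,\mathbf y)$ equal to $1$ or $0$ according as $x\sim y$ or not. The step I expect to be the real obstacle is Step~3: producing a genuine partial-linear-space / clique-geometry structure out of only ``sesqui-regular, $\lambda_{\min}(\Gamma)\ge -3$, $c$ bounded, $k$ large'', for which one must deploy the Hoffman-graph and integrability techniques of \cite{kyy3,Koolen.2019} in full, and then rigidify that geometry---using the equality $c=9$---into an honest Steiner triple system and recover $\Gamma$ as its block graph.
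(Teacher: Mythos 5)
Your Steps 1, 2 and 4 are essentially sound, and Step 2 is a clean alternative route to the relevant part of Theorem \ref{c equals k or k-1}: the paper instead classifies $c=k$ and $c=k-1$ directly for all $k$ (Lemmas \ref{c is k} and \ref{c is k-1}, which also pick up the $3$-cube for small $k$), whereas you classify the regime $n-k-1\le 2$; for $k\ge K_2$ the two coverages agree. The genuine gap is Step 3, which is where all the difficulty of the theorem lives and where your text states the conclusions to be proved rather than proving them. You posit a clique geometry in which every edge lies in a unique line, every vertex lies on at most $3$ lines, and --- crucially --- each pair (line on $x$, line on $y$) contributes \emph{at most one} common neighbour of $x$ and $y$. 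None of these follow from the cited structure theory as stated. What Theorem \ref{Hoffman graph} actually supplies is a fat Hoffman graph whose quasi-cliques are cliques; two such cliques can share \emph{two} slim vertices (the paper's Fact 1 only bounds the intersection by $2$, and Lemma \ref{lem: neighbor 1 common}~(ii) shows equality occurs exactly in the ``mate'' configuration), so your $3\times3$ count only gives $c\le 18$. Ruling out intersection $2$ is precisely the content of the paper's Claim \ref{cla: no same support} ($S=\emptyset$), whose proof occupies most of Section 3 and uses $c\ge 9$ in an essential way. Likewise, ``every edge lies in a line'' fails for edges of the special graph having no common fat neighbour; disposing of these is the content of Theorem \ref{Hoffman graph integrable} and Claim \ref{cla: Dn like}, which force every indecomposable factor to admit an integral reduced representation of norm $3$ by applying Theorem \ref{thm:cameron} to its special graph.

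Note also that the paper's order of operations is the reverse of yours: it first proves $1$-integrability (Theorem \ref{1 integrability}, via Hoffman graphs and Cameron et al.), and only then extracts the geometry --- the ``lines'' are the coordinates of the norm-$3$ integral representation, i.e.\ the supports of the vectors $\mathbf{x}$, and Lemmas \ref{lem: general common support}--\ref{lem: conclusion} show (from $c\ge 9$ and $S=\emptyset$) that adjacent vertices share exactly one coordinate and non-adjacent ones share none, whence $c=9$ and the Steiner triple system. So your Step 4 check of integrability for case (i) is an input to, not a consequence of, the identification of $\Gamma$ as a block graph. Until you can actually establish the three properties of the clique geometry above (or, equivalently, the $1$-integrability plus $S=\emptyset$), Step 3 is a gap rather than a proof.
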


\begin{re}
Let $\lambda\geq2$ be an integer. Let $\Gamma$ be a strongly regular graph with  parameters $(n,k,a,c)$ and smallest eigenvalue $-\lambda$. Neumaier \cite[Theorem 5.1]{Neumaier.1979} showed that, if $k$ is sufficiently large, then $\Gamma$ is a Steiner graph, a Latin square graph, or a complete multipartite graph. This implies that for any strongly reguar graph with parameters $(n,k,a,c)$ and fixed smallest eigenvalue $-\lambda$, if its valency is very large, then either $c=k$ or $c\leq \lambda^2$ holds. Therefore, Theorem \ref{new} can be regarded as a generalization of this result of Neumaier in the case where $\lambda=3$. Note that in our proof, we do not use association schemes as our tool.
\end{re}

In order to show Theorem \ref{new}, it is sufficient to show the following results.

\begin{thm}\label{c equals k or k-1}
Let $\Gamma$ be a connected sesqui-regular graph with parameters $(n,k,c)$ and smallest eigenvalue $\lambda_{\min}(\Gamma)\in[-3,-2)$. 
\begin{enumerate}
	\item If $c=k$, then $n\geq 6$ and $\Gamma$ is the complete multipartite graph $K_{\frac{n}{3}\times 3}$, that is, the complement of the disjoint union of cycles with length $3$.
	\item If $c=k-1$, then $n\geq 7$ and $\Gamma$ is the complement of the disjoint union of cycles with length at least $4$, or is the $3$-cube.
	\end{enumerate}
In particular, $\Gamma$ is $1$-integrable when $c=k$ or $k-1$.
\end{thm}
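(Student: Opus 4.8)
The plan is to treat the two cases separately: the case $c=k$ is essentially immediate, and the case $c=k-1$ needs a local analysis organised by the diameter of $\Gamma$. For $c=k$: if $d(x,y)=2$, then $|\Gamma_1(x)\cap\Gamma_1(y)|=c=k=|\Gamma_1(x)|$, so $\Gamma_1(x)=\Gamma_1(y)$; hence $\Gamma$ has diameter at most $2$, since along a geodesic $x_0\sim x_1\sim x_2\sim x_3$ we would have $x_3\in\Gamma_1(x_2)=\Gamma_1(x_0)$, contradicting $d(x_0,x_3)=3$. Thus ``$u=v$ or $\Gamma_1(u)=\Gamma_1(v)$'' is an equivalence relation whose classes are the colour classes of a complete multipartite graph, and by regularity $\Gamma=K_{m\times t}$ with $n=mt$. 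As $\Gamma$ is connected with an edge, $m\geq2$, so $\lambda_{\min}(\Gamma)=-t$; then $\lambda_{\min}(\Gamma)\in[-3,-2)$ forces $t=3$ and $n=3m\geq6$.

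\textbf{The case $c=k-1$: reductions and the diameter-$2$ subcase.} Fix a vertex $x$ (note $k\geq2$, since $\lambda_{\min}(\Gamma)\leq-2$). Counting edges between $\Gamma_1(x)$ and $\Gamma_2(x)$ gives $|\Gamma_2(x)|=k-\tfrac{2e(\Gamma_1(x))}{k-1}\leq k$, and $e(\Gamma_1(x))=\binom{k}{2}$ would make $\Gamma=K_{k+1}$, excluded by the eigenvalue hypothesis; hence $1\leq|\Gamma_2(x)|\leq k$. Also $d(x,y)=2$ yields $|\Gamma_1(y)\setminus\Gamma_1(x)|=1$, so every $y\in\Gamma_2(x)$ has a unique neighbour outside $\{x\}\cup\Gamma_1(x)$. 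If $\Gamma$ had a geodesic $x_0\sim\cdots\sim x_4$, then $x_4$ would have no neighbour in $\{x_0\}\cup\Gamma_1(x_0)$ while $\Gamma_1(x_2)=\{x_3\}\cup(\Gamma_1(x_0)\setminus\{p\})$ for a single vertex $p$, so $|\Gamma_1(x_2)\cap\Gamma_1(x_4)|\leq1$; since this equals $c=k-1$, we get $k=2$, whence $\Gamma$ is a cycle with $\lambda_{\min}(\Gamma)\geq-2$, a contradiction. So $\Gamma$ has diameter $2$ or $3$. If the diameter is $2$, then in $\overline{\Gamma}$ every edge lies in exactly $\overline k-2$ triangles ($\overline k:=n-1-k$), so the neighbourhood of every vertex of $\overline{\Gamma}$ induces an $(\overline k-2)$-regular graph on $\overline k$ vertices, i.e.\ the cocktail-party graph $K_{\overline k/2\times2}$; in particular $\overline k$ is even. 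If $\overline{\Gamma}$ is disconnected and $\overline k\geq4$, then $-1-\overline k\leq-5$ is an eigenvalue of $\Gamma$, impossible; and if $\overline{\Gamma}$ is connected and $\overline k\geq4$, then the classification of connected locally cocktail-party graphs forces $\overline{\Gamma}$ to be a cocktail-party graph and hence $\Gamma$ disconnected. Therefore $\overline k=2$, so $\overline{\Gamma}=\bigsqcup_iC_{\ell_i}$ with all $\ell_i\geq4$ (no triangles), and $\Gamma$ is the complement of a disjoint union of cycles of length $\geq4$; a short spectral computation ($\lambda_{\min}(\Gamma)=-3$ when $\overline{\Gamma}$ has at least two cycles, and $\lambda_{\min}(\overline{C_\ell})=-1-2\cos(2\pi/\ell)$ for a single cycle) shows that the eigenvalue hypothesis holds exactly when $\overline{C_4}$ (already disconnected), $\overline{C_5}$ and $\overline{C_6}$ are excluded, which gives $n\geq7$.

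\textbf{The case $c=k-1$: the diameter-$3$ subcase.} Fix $x$ with $\Gamma_3(x)\neq\emptyset$ and $z\in\Gamma_3(x)$. Each neighbour $y$ of $z$ in $\Gamma_2(x)$ has $z$ as its unique neighbour outside $\{x\}\cup\Gamma_1(x)$, hence $\Gamma_1(y)=\{z\}\cup(\Gamma_1(x)\setminus\{p_y\})$ with $p_y\in\Gamma_1(x)$, and distinct such neighbours of $z$ have distinct ``missing'' vertices $p_y$. Applying eigenvalue interlacing, against $\lambda_{\min}(\Gamma)\geq-3$, to well-chosen induced subgraphs built from $z$, these vertices $y$, and vertices of $\Gamma_1(x)$---together with the bipartite subgraph induced between $\Gamma_1(x)$ and $\Gamma_2(x)$, which is close to $K_{k,k}$ minus a perfect matching---bounds the relevant local parameters and forces $\Gamma$ to be triangle-free with $k\leq3$. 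Since $k=2$ would make $\Gamma$ a cycle, $k=3$; then $|\Gamma_1(x)|=|\Gamma_2(x)|=3$, the induced bipartite graph between $\Gamma_1(x)$ and $\Gamma_2(x)$ is a $6$-cycle, $\Gamma_3(x)$ consists of a single vertex and $\Gamma_4(x)=\emptyset$, which identifies $\Gamma$ with the $3$-cube $Q_3$ (which is sesqui-regular with parameters $(8,3,2)$ and has $\lambda_{\min}(Q_3)=-3$).

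\textbf{Integrability, and the main obstacle.} In each case $A(\Gamma)+3I$ is an integral Gram matrix: for $\Gamma=K_{m\times3}$ and for $\Gamma=\overline{\bigsqcup_iC_{\ell_i}}$ we have $A(\Gamma)+3I=J+\bigl(2I-A(\overline{\Gamma})\bigr)=J+\widetilde M\widetilde M^{\top}$, where $\widetilde M$ is an oriented incidence matrix of the $2$-regular graph $\overline{\Gamma}$; for $\Gamma=Q_3$ we have $A(\Gamma)+3I=MM^{\top}$ with $M$ the incidence matrix of $Q_3$. Hence $\Gamma$ is $1$-integrable. The main obstacle is the diameter-$3$ subcase: turning the rigid combinatorics imposed by $c=k-1$ into a genuine upper bound on $k$, and then matching the resulting small-valency structure exactly to $Q_3$ without leaving near-misses, requires the interlacing subgraphs to be chosen with care. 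By contrast, the diameter-$2$ subcase is comparatively routine once one notices that $\overline{\Gamma}$ is locally cocktail-party, and the case $c=k$ is immediate.
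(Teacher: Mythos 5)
Your proposal has a genuine gap in the diameter-$3$ subcase of $c=k-1$, and you flag it yourself as ``the main obstacle'': the claim that eigenvalue interlacing ``applied to well-chosen induced subgraphs \dots forces $\Gamma$ to be triangle-free with $k\leq 3$'' is an assertion, not an argument --- no subgraphs are exhibited, and nothing in what precedes it bounds $k$. The missing idea is a concrete one. Take a geodesic $x_0\sim x_1\sim x_2\sim x_3$. First prove the combinatorial claim that \emph{every} vertex of $\Gamma_1(x_0)$ is at distance $2$ from $x_3$: since $|\Gamma_1(x_1)\cap\Gamma_1(x_3)|=k-1\geq 2$ and this set lies in $\Gamma_2(x_0)\cap\Gamma_1(x_3)$, pick two distinct $w_3,w_4$ there; by your own observation $\Gamma_1(w_i)=\{x_3\}\cup(\Gamma_1(x_0)\setminus\{p_{w_i}\})$ with $p_{w_3}\neq p_{w_4}$, so $w_3\not\sim w_4$, and $|\Gamma_1(w_3)\cap\Gamma_1(w_4)|=k-1$ together with inclusion--exclusion gives $\Gamma_1(x_0)\subseteq\Gamma_1(w_3)\cup\Gamma_1(w_4)$. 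Hence every $w\in\Gamma_1(x_0)$ is at distance $2$ from $x_3$ and has exactly $k-1$ neighbours in $\Gamma_2(x_0)\cap\Gamma_1(x_3)$; double counting yields $|\Gamma_2(x_0)\cap\Gamma_1(x_3)|=k$, so $V(\Gamma)=\{x_0\}\cup\Gamma_1(x_0)\cup\Gamma_1(x_3)\cup\{x_3\}$ by regularity and connectivity, and this four-cell partition is equitable with quotient matrix whose smallest eigenvalue is exactly $-k$ (eigenvector $(1,-1,1,-1)^{\top}$). Interlacing then forces $k=3$ and $\Gamma=Q_3$. Without this (or an equivalent) construction your argument does not exclude large $k$ when the diameter is $3$, so the proposal as written is incomplete.

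The remainder is essentially correct. The case $c=k$ matches the paper. The diameter-$2$ subcase takes a genuinely different route: the paper shows directly by interlacing that $|\Gamma_2(x)|=2$, whereas you pass to $\overline{\Gamma}$, observe it is locally a cocktail-party graph, and invoke the classification of connected locally $K_{m\times2}$ graphs; that classification is true and provable by a short local argument, but you should supply or cite it rather than assume it. Your $1$-integrability argument via the Gram factorisations $A+3I=J+\widetilde{M}\widetilde{M}^{\top}$ and $A(Q_3)+3I=MM^{\top}$ is correct and is a cleaner packaging of the paper's explicit vectors $\mathbf{e}+\mathbf{e}_{i,p}-\mathbf{e}_{i,p+1}$.
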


\begin{thm}\label{1 integrability}
There exists a positive constant $K_3$ such that for any connected sesqui-regular graph $\Gamma$ with parameters $(n,k,c)$, where $c\geq9$, and smallest eigenvalue $\lambda_{\min}(\Gamma)\in[-3,-2)$, if $k\geq K_3$, then $\Gamma$ is $1$-integrable.
\end{thm}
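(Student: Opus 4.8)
The plan is to combine the $2$-integrability theorem of Koolen et al.~\cite{kyy3} with Theorem~\ref{kgyy} and a structural description of the large cliques of $\Gamma$. First choose $K_3$ large enough that $k\ge K_3$ forces both the hypothesis $k\ge\kappa$ of \cite{kyy3} (recall $k_{\min}(\Gamma)=k$ since $\Gamma$ is regular) and the hypothesis $k\ge K_1(3)$ of Theorem~\ref{kgyy}. Then $\Gamma$ is $2$-integrable, and Theorem~\ref{kgyy} with $\lambda=3$ gives $c\le 18$ or $n-k-1\le 2$. In the second case the complement of $\Gamma$ is regular of valency $n-1-k\le 2$, hence a disjoint union of cycles (the valencies $0$ and $1$ are excluded, since they would force $\lambda_{\min}(\Gamma)\in\{-1,-2\}$, contrary to hypothesis); sesqui-regularity then forces $c=k-1$ (all those cycles of length $\ge 4$) or $c=k$ (all of length $3$), and Theorem~\ref{c equals k or k-1} applies, so $\Gamma$ is $1$-integrable. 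Hence from now on we may assume $9\le c\le 18$.

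The core of the argument is to prove that, for $k$ large, $\Gamma$ is the block graph of a partial linear space all of whose lines have size $3$; equivalently, that the edge set of $\Gamma$ can be partitioned into (necessarily large) maximal cliques such that every vertex lies in exactly three of them and any two of them share at most one vertex. Granting this, $1$-integrability is immediate: list these cliques as $C_1,\dots,C_m$ and set $\mathbf{x}=\sum_{i\,:\,x\in C_i}\mathbf{e}_i\in\mathbb{Z}^m$ for each vertex $x$. Then $(\mathbf{x},\mathbf{x})=3$, while two vertices are adjacent if and only if they lie in a (unique) common clique, so $(\mathbf{x},\mathbf{y})=1$ for adjacent $x,y$ and $(\mathbf{x},\mathbf{y})=0$ otherwise; since $\lceil-\lambda_{\min}(\Gamma)\rceil=3$, this is exactly \eqref{def: s-integrable} with $s=1$.

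To extract this clique structure I would proceed in the spirit of \cite{Koolen.2019}: using $\lambda_{\min}(\Gamma)\ge-3$, sesqui-regularity, and $k$ large, one first shows that every edge of $\Gamma$ lies in a clique whose size grows with $k$ and that maximal such cliques pairwise intersect in a bounded number of vertices (here sesqui-regularity controls how a vertex adjacent into, but not inside, a large clique can ``return'' to it, of which there are at most $c$). A geometric-graph argument then bounds by $3$ the number of large maximal cliques through a fixed vertex, and $c\ge 9$ forces this number to be exactly $3$: if it were at most $2$ the graph would essentially be a generalized line graph, and then $c$ would be bounded by a small absolute constant, contradicting $c\ge 9$. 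Finally one checks, again using that $k$ is large, that there is no ``leftover'' local structure outside these cliques and that the pairwise intersections in fact have size at most $1$ (two large cliques meeting in two vertices would, together with the local cliques at those two vertices, contradict either sesqui-regularity or $\lambda_{\min}(\Gamma)\ge-3$).

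The main obstacle is precisely this structural step --- turning the $2$-integrable representation and the eigenvalue bound into the rigid ``block graph of a partial linear space'' picture --- and in particular controlling the overlaps of the large cliques throughout the range $10\le c\le 18$, where the clean Steiner-system configuration ($c=9$) is no longer automatic. I expect this to require a careful analysis of the local graphs $\Gamma(x)$ and of the common neighbourhoods of vertices at distance $2$, with all the small sporadic configurations that could a priori obstruct $1$-integrability being eliminated once $k\ge K_3$.
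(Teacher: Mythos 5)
Your opening reduction is sound and matches the paper: regularity lets you invoke the $2$-integrability result of \cite{kyy3} and Theorem \ref{kgyy} with $\lambda=3$, the case $n-k-1\le 2$ collapses to $c\in\{k-1,k\}$ and is absorbed by Theorem \ref{c equals k or k-1}, and one is left with $9\le c\le 18$. Your final step is also correct: if the edge set of $\Gamma$ partitions into maximal cliques with every vertex on exactly three of them and any two meeting in at most one vertex, then $\mathbf{x}=\sum_{i:x\in C_i}\mathbf{e}_i$ witnesses $1$-integrability.

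The genuine gap is the entire middle: the structural claim that such a clique partition exists \emph{is} the theorem, and your sketch of it is a list of assertions, not a proof. ``Every edge lies in a clique whose size grows with $k$'' is the content of the deep result the paper imports as Theorem \ref{Hoffman graph} (from \cite{kyy3} and \cite{Koolen.2021a}); it is not something one ``first shows'' in a line. The bound of $3$ on the number of large cliques through a vertex, the bound of $1$ (rather than $2$) on pairwise intersections, and the absence of leftover local structure are exactly where all the work lies --- in the paper these correspond, respectively, to $\lambda_{\min}(\mathfrak{h})\ge -3$ forcing at most three fat neighbours per slim vertex, to the forbidden Hoffman subgraph of Figure \ref{fig:} (two fat vertices with three common slim neighbours force eigenvalue $-4$), and to the long argument of Theorem \ref{Hoffman graph integrable} and Claim \ref{cla: Dn like}, where the problematic case (every slim vertex of an indecomposable factor having exactly one fat neighbour) is killed by combining Theorem \ref{thm:cameron} (bounding the factor by $36$ vertices), a counting argument using $k\ge 106$, and the forbidden subgraphs of Figures \ref{fig:1} and \ref{fig:2}. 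Your one quantitative remark --- that at most two large cliques per vertex would force $c\le 8$ --- is the right kind of counting, but it presupposes the intersection bound you have not established. You yourself flag this structural step as ``the main obstacle,'' and indeed nothing in the proposal resolves it; as written the argument does not go through. (A secondary point: the $2$-integrable representation you invoke at the start is never actually used afterwards, so either it should be dropped or you need to explain how it feeds the structural analysis.)
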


\begin{thm}\label{c equals 9}
Let $\Gamma$ be a connected sesqui-regular graph with parameters $(n,k,c)$, where $k-2\geq c\geq9$, and smallest eigenvalue $\lambda_{\min}(\Gamma)\in[-3,-2)$. If $\Gamma$ is $1$-integrable, then $c=9$ and $\Gamma$ is the block graph of a Steiner triple system.	
\end{thm}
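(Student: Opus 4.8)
The plan is to use $1$-integrability: for each vertex $x$ take an integral $\mathbf{x}$ with $(\mathbf{x},\mathbf{x})=3$ and $(\mathbf{x},\mathbf{y})\in\{0,1\}$ for $x\neq y$ according to adjacency. Since an integral vector of squared norm $3$ is $\pm e_i\pm e_j\pm e_k$ on three distinct coordinates, each $\mathbf{x}$ has support of size $3$ with entries $\pm1$; and since flipping a single coordinate sign is an isometry of $\mathbb{Z}^m$, we may alter coordinate signs at will. As $\lambda_{\min}(\Gamma)<-1$, $\Gamma$ is not complete, so (being connected and regular) every vertex has a non-neighbour and there is a pair at distance exactly $2$. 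For any non-adjacent $x,y$ we have $(\mathbf{x},\mathbf{y})=0$, so $\mathrm{supp}(\mathbf{x})$ and $\mathrm{supp}(\mathbf{y})$ are either disjoint (a \emph{good} pair) or meet in two coordinates with cancelling signs (a \emph{bad} pair).

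First I would analyse the common neighbours of a good distance-$2$ pair. Flipping signs, take $\mathbf{x}=e_1+e_2+e_3$, $\mathbf{y}=e_4+e_5+e_6$. If $z$ is a common neighbour, the equations $z_1+z_2+z_3=z_4+z_5+z_6=1$ together with $|\mathrm{supp}(\mathbf{z})|=3$ force $\mathbf{z}=e_a+e_b+\varepsilon e_\ell$ with $a\in\{1,2,3\}$, $b\in\{4,5,6\}$, $\ell\notin\{1,\dots,6\}$ and $\varepsilon=\pm1$. For each of the nine cells $(a,b)$ there are at most two common neighbours, and a second one occurs only as $e_a+e_b+e_\ell$ and $e_a+e_b-e_\ell$, i.e.\ only when the coordinate $\ell$ is used with both signs. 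Consequently $9\le c\le 18$, and if the representation is such that no coordinate is used with two signs --- equivalently, after flips $\mathbf{x}\in\{0,1\}^m$ for all $x$ --- then $c=9$ and the common neighbours are in bijection with the $3\times3$ grid of cells $(a,b)$; call this the \emph{grid structure}. By sesqui-regularity a single good pair already pins down $c$.

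The heart of the proof, and the step I expect to be the main obstacle, is to reduce to a $\{0,1\}$-valued representation and to rule out bad pairs. For the first, one argues that a coordinate carrying both signs is severely constrained --- every vertex through it must also meet some fixed $3$-set --- and can be eliminated, so a suitably reduced representation has all entries in $\{0,1\}$ and, in particular, gives $c=9$; here one may lean on the theory of integral representations of graphs with smallest eigenvalue $-3$ (cf.\ \cite{kyy3}). One must also check that good pairs actually occur: if every distance-$2$ pair were bad, a sign count (for fixed $\mathbf{x}=e_1+e_2+e_3$ the bad non-neighbours fall into at most six ``shapes'', each occupied by at most two vertices) bounds every second neighbourhood by an absolute constant, which, given $c\ge 9$ and $c\le k-2$, a short analysis shows is impossible for a connected sesqui-regular graph. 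Finally, the $3\times3$ grid structures around different good pairs must be glued into one coherent incidence system, and it is this global coherence --- sign consistency across the whole graph, and connectivity of the point set --- that is genuinely delicate.

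Once these are in place the conclusion is short: the occurring coordinates become a point set $P$, each vertex is the triple equal to its support, and two vertices are adjacent iff their triples meet, necessarily in exactly one point (two $\{0,1\}$-vectors of norm $3$ with inner product $2$ being forbidden). The grid structure says that for any two disjoint triples all nine cross-pairs of points lie in (necessarily unique) triples; propagating this along $P$ via sesqui-regularity and $c\le k-2$ shows every pair of points of $P$ lies in exactly one triple. Hence $(P,\{\mathrm{supp}(\mathbf{x}):x\in V(\Gamma)\})$ is a Steiner triple system and $\Gamma$ is its block graph, so $c=9$, as required.
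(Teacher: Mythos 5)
Your strategy coincides with the paper's: pass to the norm-$3$ integral representation, note that each vector is a $(0,\pm1)$-vector with $3$-element support, analyse the common neighbours of a distance-$2$ pair with disjoint supports to force $c=9$ together with a $3\times 3$ grid of common neighbours, and assemble the supports into a Steiner triple system having $\Gamma$ as its block graph. Your local computation for a ``good'' pair is correct, and your closing paragraph is essentially the paper's derivation of Theorem~\ref{c equals 9} from Lemma~\ref{lem: conclusion}.

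The gap is exactly where you flag it, and it is not a small one: you never prove that the representation can be taken $\{0,1\}$-valued, i.e.\ that (a) no two vertices have the same support and (b) no two non-adjacent vertices have supports meeting in two coordinates. These are the content of Claim~\ref{cla: no same support} and of Lemmas~\ref{lem: non neighbor 2 common} and~\ref{lem: non neighbor 0 common}, and together they occupy most of Section~3. For (a), the paper shows that the set $S$ of vertices with a repeated support would induce a clique whose supports all pass through one common coordinate, that $V(\Gamma)-S$ would then carry the structure of the block graph of an $STS(s)$, and a final regularity count forces $s=3$ against $s\geq4$; nothing in your sketch substitutes for this. For (b), your argument does not close as stated: bounding the ``bad'' second neighbourhood by six shapes with at most two vertices each gives only $|\Gamma_2(x)|\leq 12$, while $c\leq k-2$ yields only $|\Gamma_2(x)|\geq 1+(k-c)\geq 3$, so there is no numerical contradiction. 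The paper instead proves $|\Gamma_2(x)|\leq 3$ (using $S=\emptyset$), deduces that $\Gamma_2(x)$ induces a triangle, and then derives a sign contradiction among the three corresponding vectors; it also needs Lemma~\ref{lem: non neighbor 2 common} (if one vertex meets $\operatorname{supp}(\mathbf{x})$ in two coordinates then every vertex of $\Gamma_2(x)$ does) to handle the mixed case, which your sketch does not address. Since (a) and (b) are precisely what make $c=9$ hold and make the local grids glue into a Steiner system, the proposal is a correct plan rather than a proof.
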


%
%

We will proceed as follows: In Section \ref{proof of the case where c equals k or k-1}, we give the proof of Theorem \ref{c equals k or k-1}. As we need Hoffman graphs to show Theorem \ref{1 integrability}, we first prove Theorem \ref{c equals 9} in Section \ref{proof of the case where c is at least k-2}. In the last section, we will introduce the terminology related to Hoffman graphs and show Theorem \ref{1 integrability}.

\section{Proof of Theorem \ref{c equals k or k-1}}\label{proof of the case where c equals k or k-1}

Let $\Gamma$ be a connected sesqui-regular graph with parameters $(n,k,c)$ and smallest eigenvalue $\lambda_{\min}(\Gamma)$. Assume
\[-3\leq\lambda_{\min}(\Gamma)<-2.\]
We immediately find that $k\geq3$ and $c\geq1$. In this section, we deal with the special case where $k-1\leq c\leq k$ and prove Theorem \ref{c equals k or k-1}. First, let us look at the case where $c=k$.

\begin{lem}\label{c is k}
	Suppose that $\Gamma$ is a connected sesqui-regular graph with parameters $(n,k,k)$ and smallest eigenvalue $\lambda_{\min}(\Gamma)\in [-3,2)$. Then $n\geq6$ and $\Gamma$ is the complete multipartite graph $K_{\frac{n}{3}\times 3}$.
\end{lem}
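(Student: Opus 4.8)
The plan is to exploit the fact that when $c=k$, two vertices at distance $2$ are forced to have exactly the same neighbourhood. Indeed, if $d(x,y)=2$ then $x$ and $y$ have $c=k$ common neighbours, and since each of $x$ and $y$ has valency $k$, this means $\Gamma_1(x)=\Gamma_1(y)$ (both equal the set of those $k$ common neighbours). Before using this I would pin down the diameter: since $\lambda_{\min}(\Gamma)<-2$, the graph $\Gamma$ is not complete (a complete graph has smallest eigenvalue $-1$), so, being connected, it has diameter at least $2$ and hence possesses pairs of vertices at distance $2$. If some pair $x,z$ had $d(x,z)=3$, I would pick a shortest path $x\sim a\sim b\sim z$; then $d(x,b)=2$ gives $\Gamma_1(x)=\Gamma_1(b)\ni z$, contradicting $d(x,z)=3$. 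Therefore $\Gamma$ has diameter exactly $2$.

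Next I would extract the global structure. Define a relation $\approx$ on $V(\Gamma)$ by setting $x\approx y$ whenever $x=y$ or $d(x,y)=2$. Symmetry is immediate, and for transitivity: if $x\approx y\approx z$ with $x,y,z$ pairwise distinct, then $\Gamma_1(x)=\Gamma_1(y)=\Gamma_1(z)$, so $x\not\sim z$ and $x\neq z$; since the diameter is $2$ this forces $d(x,z)=2$, i.e.\ $x\approx z$. Moreover, if $x$ and $y$ are distinct with $x\not\approx y$, then $d(x,y)\neq 2$, so by the diameter bound $x\sim y$. Consequently two vertices of $\Gamma$ are adjacent if and only if they lie in distinct $\approx$-classes, which says precisely that $\Gamma$ is the complete multipartite graph whose parts are the equivalence classes of $\approx$.

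Finally I would use regularity and the eigenvalue hypothesis to fix the parameters. Since $\Gamma$ is $k$-regular, all parts have a common size $t$; writing $m$ for the number of parts we have $n=mt$, and $m\geq 2$ because the diameter is $2$. The adjacency matrix is then $(J_m-I_m)\otimes J_t$, whose eigenvalues are $(m-1)t=k$, $-t$ (with multiplicity $m-1$), and $0$; hence $\lambda_{\min}(\Gamma)=-t$, and the requirement $-t\in[-3,-2)$ forces $t=3$. Thus every part has size $3$, so $\Gamma=K_{\frac{n}{3}\times 3}$ and $n=3m\geq 6$, as claimed; one checks conversely that this graph is indeed sesqui-regular with $c=k$, since two vertices in a common part have all remaining $k$ vertices as common neighbours.

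The argument is essentially elementary, so there is no deep obstacle; the one step requiring care is the structural passage — verifying that $\approx$ is transitive and that distinct $\approx$-classes are completely joined — since this is exactly where connectivity, the distance-$2$ neighbourhood identity, and the diameter bound must be combined correctly. Everything after that is a short spectral computation.
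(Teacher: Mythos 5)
Your proof is correct and follows essentially the same route as the paper: use $c=k$ to show that vertices at distance $2$ have identical neighbourhoods, deduce diameter $2$, turn "non-adjacent or equal" into an equivalence relation whose classes are the parts of a complete multipartite graph, use regularity to equalise part sizes, and use $\lambda_{\min}(K_{m\times t})=-t\in[-3,-2)$ to force $t=3$. Your write-up is in fact slightly more explicit than the paper's (e.g.\ the diameter argument and the spectral computation), but there is no substantive difference.
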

\begin{proof}
	As the parameters of $\Gamma$ is $(n,k,k)$, any two vertices at distance $2$ in $\Gamma$ have the same neighbors. This implies that the diameter of $\Gamma$ is $2$, and for any two vertices $x$ and $y$ of $\Gamma$, $\Gamma_1(x)=\Gamma_1(y)$ holds if $x\not\sim y$. Now we define a relation $\mathcal{R}$ on the vertex set of $\Gamma$ such that for any two vertices $x$ and $y$, $x\mathcal{R}y$ if and only if $\Gamma_1(x)=\Gamma_1(y)$. It is not hard to find that $\mathcal{R}$ is an equivalence relation, and for any two distinct vertices $x$ and $y$, $x\mathcal{R}y$ if and only if $x\not\sim y$. This means that the vertices in the same equivalence class are pairwise non-adjacent, and the vertices in different equivalence classes are always adjacent. Considering that $\Gamma$ is regular, we conclude that all of these equivalence classes have the same cardinality, and hence $\Gamma$ is the complete multipartite graph $K_{\frac{n}{p}\times p}$ for some integer $p$. We have $\frac{n}{p}\geq2$ immediately, as $\Gamma$ is connected. Note that the smallest eigenvalue of the graph $K_{\frac{n}{p}\times p}$ equals $-p$. We have $p=3$, as $\lambda_{\min}(\Gamma)\in [-3,2)$. Therefore, the lemma holds.
\end{proof}

Before discussing the case where $c=k-1$, the concept of eigenvalue interlacing as follows is needed.

\begin{lem}\label{lem: interlacing}
	\begin{enumerate}
		\item {\rm (cf.~\cite[Theorem 9.1.1]{godsil})} Suppose that $\Gamma$ is a graph and $\Gamma'$ is an induced subgraph of $\Gamma$, then the smallest eigenvalue of $\Gamma'$ is at least the smallest eigenvalue of $\Gamma$.
		\item {\rm (cf.~\cite[Lemma  9.6.1]{godsil})} Suppose that $\Gamma$ is a graph and $\pi$ is a partition of its vertex set $V(\Gamma)$ with cells $V_1,\ldots,V_r$. Define the \emph{quotient matrix} of $\Gamma$ relative to $\pi$ to be the $r\times r$ matrix $B$ such that the $ij$-entry of $B$ is the average number of neighbors in $V_j$ of vertices in $V_i$. Then the smallest eigenvalue of $B$ is at least the eigenvalue of $\Gamma$.
	\end{enumerate}
\end{lem}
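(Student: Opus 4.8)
The plan is to derive both parts from the Rayleigh quotient (Courant--Fischer) description of the smallest eigenvalue of a real symmetric matrix $M$, namely $\lambda_{\min}(M)=\min_{v\neq 0} v^{\top}Mv/(v^{\top}v)$, applied to the adjacency matrix $A$ of $\Gamma$.

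For part (i), I would first note that since $\Gamma'$ is an \emph{induced} subgraph, its adjacency matrix $A'$ is precisely the principal submatrix of $A$ indexed by $V(\Gamma')$. Given any nonzero vector $u$ supported on $V(\Gamma')$, padding it with zeros on $V(\Gamma)\setminus V(\Gamma')$ preserves both the quadratic form $u^{\top}A'u$ and the norm $u^{\top}u$, so the set of Rayleigh quotients realized by $A'$ is a subset of those realized by $A$. Minimizing over the smaller set can only increase the minimum, which gives $\lambda_{\min}(\Gamma')\geq\lambda_{\min}(\Gamma)$; equivalently, this is Cauchy interlacing for principal submatrices.

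For part (ii), I would introduce the characteristic matrix $S$ of the partition $\pi$, whose $i$-th column is the indicator vector of $V_i$, and set $\Delta=S^{\top}S=\mathrm{diag}(|V_1|,\dots,|V_r|)$. A short computation shows $S^{\top}AS=\Delta B$, so $B=\Delta^{-1}S^{\top}AS$ is similar to the symmetric matrix $\Delta^{-1/2}S^{\top}AS\,\Delta^{-1/2}=C^{\top}AC$, where $C=S\Delta^{-1/2}$ has orthonormal columns (indeed $C^{\top}C=\Delta^{-1/2}S^{\top}S\Delta^{-1/2}=I_r$). Hence $B$ has the same spectrum as the compression $C^{\top}AC$, and for any nonzero $w\in\mathbb{R}^r$ one has $w^{\top}C^{\top}ACw/(w^{\top}w)=(Cw)^{\top}A(Cw)/\|Cw\|^2\geq\lambda_{\min}(\Gamma)$ since $Cw\neq 0$ and $\|Cw\|^2=w^{\top}w$; taking the minimum over $w$ yields $\lambda_{\min}(B)\geq\lambda_{\min}(\Gamma)$.

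The main (and essentially the only) point that requires care is the reduction of the possibly non-symmetric quotient matrix $B$ to the symmetric form $C^{\top}AC$ with $C$ having orthonormal columns; once that is in place both inequalities are immediate from the variational characterization of $\lambda_{\min}$. Since the statement is quoted directly from \cite{godsil}, no further machinery is needed, and one could alternatively just cite Cauchy interlacing and its equitable-partition analogue verbatim.
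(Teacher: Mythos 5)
Your proof is correct: part (i) is the standard zero-padding Rayleigh-quotient argument, and in part (ii) the identity $S^{\top}AS=\Delta B$ and the similarity of $B$ to the compression $C^{\top}AC$ with $C=S\Delta^{-1/2}$ having orthonormal columns are exactly right. The paper gives no proof of this lemma, simply citing Godsil--Royle, and your argument is precisely the standard one found there, so nothing further is needed.
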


Now we are going to prove the following lemma.

\begin{lem}\label{c is k-1}
	Suppose that $\Gamma$ is a connected sesqui-regular graph with parameters $(n,k,k-1)$, smallest eigenvalue $\lambda_{\min}(\Gamma)\in [-3,-2)$ and diameter $D(\Gamma)$.
	\begin{enumerate}
		\item If $D(\Gamma)=2$, then $n\geq 7$ and $\Gamma$ is the complement of the disjoint union of cycles with length at least $4$.
		\item If $D(\Gamma)\geq3$, then $D(\Gamma)=3$ and $\Gamma$ is the $3$-cube.
		\end{enumerate}
\end{lem}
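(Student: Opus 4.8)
The plan is to analyze a sesqui-regular graph $\Gamma$ with parameters $(n,k,k-1)$ and $\lambda_{\min}(\Gamma)\in[-3,-2)$ via its complement $\overline{\Gamma}$. The parameter condition $c=k-1$ says that any two vertices $x,y$ at distance $2$ in $\Gamma$ have all but one common neighbor: exactly one neighbor of $x$ is non-adjacent to $y$ and vice versa. I would first show that this local condition forces $\overline{\Gamma}$ to be very sparse. Concretely, fix a vertex $x$ and a non-neighbor $y$ at distance $2$; the set of non-neighbors of $x$ inside $\Gamma_1(x)\cup\{x\}$ relative to $y$ should be controlled, and I expect to deduce that in $\overline{\Gamma}$ every vertex has degree at most $2$ when $D(\Gamma)=2$ (and that $\overline{\Gamma}$ is regular, since $\Gamma$ is). A graph of maximum degree $\le 2$ is a disjoint union of paths and cycles; connectivity-type and regularity constraints on $\overline{\Gamma}$, together with the fact that $\Gamma$ itself is connected with diameter $2$, should rule out paths and isolated structure and leave exactly the disjoint union of cycles. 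The cycle lengths must be $\ge 4$: a triangle in $\overline{\Gamma}$ would create in $\Gamma$ three pairwise non-adjacent vertices with identical neighborhoods, which (as in the proof of Lemma~\ref{c is k}) pushes the graph toward complete multipartite with $\lambda_{\min}=-p$ and contradicts $c=k-1<k$; and I must also exclude $4$-cycles being forbidden — here a $C_4$ in $\overline{\Gamma}$ is allowed, so "length at least $4$" is exactly right. Finally one checks $\lambda_{\min}(\Gamma)\in[-3,-2)$ indeed holds for these complements (the eigenvalues of $\overline{\Gamma}$, a disjoint union of cycles, are $2\cos(2\pi j/\ell)$, and complementation via $A(\overline{\Gamma}) = J - I - A(\Gamma)$ translates this into the required range, forcing $n\ge 7$ to avoid the degenerate small cases).

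For part~(ii), when $D(\Gamma)\ge 3$, I would pick vertices $x$ and $z$ with $d(x,z)=3$ and argue that the sesqui-regularity with $c=k-1$ is extremely restrictive at distance $3$: take a shortest path $x\sim u\sim w\sim z$. Since $d(x,w)=2$, all but one neighbor of $x$ is adjacent to $w$; since $d(u,z)=2$, all but one neighbor of $z$ is adjacent to $u$. Chasing these "all-but-one" conditions along the path, I expect to bound $k$ from above (likely $k\le 3$) and then bound $n$, reducing to a finite check. The endpoint of that check is that $\Gamma$ must be the $3$-cube $Q_3$, which is sesqui-regular with parameters $(8,3,2)=(n,k,k-1)$, has diameter $3$, and has smallest eigenvalue $-3\in[-3,-2)$. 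To organize the argument cleanly I would use the interlacing tool of Lemma~\ref{lem: interlacing}: exhibiting a suitable induced subgraph or equitable-type partition of $\Gamma$ whose quotient matrix has smallest eigenvalue below $-3$ would eliminate all candidates except $Q_3$, and would also rule out $D(\Gamma)\ge 4$ outright (a geodesic of length $4$ contains an induced $P_5$, whose smallest eigenvalue is $-\sqrt{3}\approx-1.73$, so that alone is not enough — instead the forced local structure around a long geodesic should produce an induced subgraph, e.g. containing $K_{1,3}$ with extra edges or a specific tree, whose smallest eigenvalue is $<-3$).

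The main obstacle I anticipate is the bookkeeping in part~(ii): translating the $c=k-1$ condition into a genuine upper bound on $k$ requires carefully tracking, for a fixed geodesic, which neighbors of each vertex are the unique "missing" ones at each distance-$2$ pair, and showing these constraints cannot be simultaneously satisfied for $k$ large. A clean way to package this is to count, via double counting or interlacing on the distance partition $\{x\},\Gamma_1(x),\Gamma_2(x),\Gamma_3(x),\dots$ around $x$: the quotient matrix entries are pinned down by $k$, $c=k-1$, and regularity, so its smallest eigenvalue becomes an explicit function of $k$ and $D(\Gamma)$, and requiring it to be $\ge -3$ should force $D(\Gamma)=3$ and $k$ small. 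Part~(i) should be comparatively routine once the maximum-degree-$2$ claim for $\overline{\Gamma}$ is established; the only subtlety there is correctly handling the boundary of the eigenvalue interval (the strict inequality $\lambda_{\min}>-2$ and the non-strict $\lambda_{\min}\ge -3$) to get the exact statement, including the lower bound $n\ge 7$.
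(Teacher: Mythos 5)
Your plan has the right overall shape (analyze $\overline{\Gamma}$ in the diameter-$2$ case; chase the $c=k-1$ condition along a geodesic and interlace in the diameter-$\geq 3$ case), but in both parts the step you defer with ``I expect to deduce'' is exactly where all the work lies, and you give no argument for it. In part (i), the parameter condition only tells you that each $y\in\Gamma_2(x)$ has exactly one neighbor inside $\Gamma_2(x)$, i.e.\ the induced subgraph on $\Gamma_2(x)$ is a perfect matching; this does \emph{not} bound $|\Gamma_2(x)|$, and the degree of $\overline{\Gamma}$ is $|\Gamma_2(x)|=n-1-k$, which a priori can be large. Establishing $|\Gamma_2(x)|=2$ is the heart of part (i): the paper assumes two disjoint edges $\{y_1,y_1'\},\{y_2,y_2'\}$ in $\Gamma_2(x)$, interlaces with the quotient matrix of the partition $\{\{x\},\Gamma_1(x),\{y_1,y_1',y_2,y_2'\}\}$ to force $k=3$, and then kills that case by a separate count (a common neighbor $z$ of $y_1$ and $x$ would have to be adjacent to all of $x,y_1,y_2,y_2'$, exceeding valency $3$). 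Nothing in your sketch supplies this. Moreover, your stated reason for excluding triangles in $\overline{\Gamma}$ is incorrect: with $c=k-1$, pairwise non-adjacent vertices have $k-1$ common neighbors, not identical neighborhoods, so the comparison with Lemma~\ref{c is k} does not apply; triangles are excluded only \emph{after} one knows $\overline{\Gamma}$ is $2$-regular and that the two non-neighbors of any $x$ are adjacent to each other in $\Gamma$.

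In part (ii) the gap is similar. The distance partition of a sesqui-regular graph is not equitable in general, so you cannot simply write down a quotient matrix whose entries are ``pinned down by $k$ and $c$'' and interlace. What is needed (and what the paper proves) is the claim that for a geodesic $x_0\sim x_1\sim x_2\sim x_3$, \emph{every} vertex of $\Gamma_1(x_0)$ is at distance $2$ from $x_3$; this comes from an inclusion--exclusion count over two vertices $w_3,w_4\in\Gamma_2(x_0)\cap\Gamma_1(x_3)$ showing $\Gamma_1(x_0)\subseteq\Gamma_1(w_3)\cup\Gamma_1(w_4)$. Only then does double counting give $\Gamma=\{x_0\}\cup\Gamma_1(x_0)\cup\Gamma_1(x_3)\cup\{x_3\}$ with an equitable four-cell partition whose quotient matrix has smallest eigenvalue $-k$, forcing $k=3$ and $\Gamma=Q_3$ (and simultaneously ruling out $D(\Gamma)\geq4$). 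Your sketch gestures at ``chasing the all-but-one conditions'' and ``reducing to a finite check'' but produces neither the claim nor the counting, so as written the proposal does not constitute a proof.
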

\begin{proof}
	{\rm (i)} Assume $D(\Gamma)=2$. Since $\Gamma$ is sesqui-regular with parameters $(n,k,k-1)$, we have, for any two distinct vertices $w_1$ and $w_2$,
	\begin{equation}\label{eq: c=k-1 d=2 0}
		|\Gamma_1(w_1)\cap\Gamma_1(w_2)|=k-1 \text{ if }w_1\not\sim w_2.
	\end{equation}
	\noindent Pick a vertex $x$ of $\Gamma$. It is not hard to see $\Gamma_2(x)\neq\emptyset$, as otherwise $k=n-1$ and $G$ is the complete graph $K_n$ which has smallest eigenvalue $-1$. This leads to a contradiction, as $\lambda_{\min}(\Gamma)\in[-3,-2)$. By \eqref{eq: c=k-1 d=2 0}, we have, for any $y\in\Gamma_2(x)$,
	\begin{equation}\label{eq: c=k-1 d=2}
		|\Gamma_1(y)\cap\Gamma_1(x)|=k-1 \text{ and }
		|\Gamma_1(y)\cap\Gamma_2(x)|=1.
	\end{equation}
	This implies that the induced subgraph of $\Gamma$ on $\Gamma_2(x)$ is the disjoint union of $K_2$'s. We claim $|\Gamma_2(x)|=2$. Otherwise, we can find $\{y_1,y_1',y_2,y_2'\}\subseteq\Gamma_2(x)$ with $y_1\sim y_1'$ and $y_2\sim y_2'$. Let $\Gamma'$ be the induced subgraph of $\Gamma$ on the set $\{x\}\cup \Gamma_1(x)\cup\{y_1,y_1',y_2,y_2'\}$. Then, it is straightforward to check that the quotient matrix of $\Gamma'$ relative to the partition $\{\{x\},\Gamma_1(x),\{y_1,y_1',y_2,y_2'\}\}$, by \eqref{eq: c=k-1 d=2}, is as follows
	\[
	\left(
	\begin{array}{ccc}
		0 & k & 0 \\
		1 & k-1-\frac{4(k-1)}{k} & \frac{4(k-1)}{k} \\
		0 & k-1 & 1 \\
	\end{array}
	\right),
	\]
	
	\noindent which has smallest eigenvalue $-\frac{2(k-1)+\sqrt{5k^2-8k+4}}{k}$. By Lemma \ref{lem: interlacing},  $-\frac{2(k-1)+\sqrt{5k^2-8k+4}}{k}\geq\lambda_{\min}(\Gamma')\geq\lambda_{\min}(\Gamma)\geq-3$ holds. This shows $k=3$. But as $y_1\not\sim y_2$, we have $|\Gamma_1(y_1)\cap\Gamma_1(y_2)|=k-1=2$ by \eqref{eq: c=k-1 d=2 0}. Note that $\Gamma_1(y_1)=\{y_1'\}\cup(\Gamma_1(y_1)\cap\Gamma_1(x))$ and $|\Gamma_1(y_1)\cap\Gamma_1(x)|=2$ by \eqref{eq: c=k-1 d=2}. It follows $\Gamma_1(y_1)\cap\Gamma_1(y_2)= \Gamma_1(y_1)\cap\Gamma_1(x)$, as $y_1'\not\sim y_2$. Similarly, we have $\Gamma_1(y_1)\cap\Gamma_1(y_2')= \Gamma_1(y_1)\cap\Gamma_1(x)$. Let $z\in\Gamma_1(y_1)\cap\Gamma_1(x)$. Then $\{x,y_1,y_2,y_2'\}\subseteq\Gamma_1(z)$, which contradicts $|\Gamma_1(z)|=k=3$. Hence $|\Gamma_2(x)|=2$ and the induced subgraph of $\Gamma$ on $\Gamma_2(x)$ is a $K_2$. This reveals that the complement $\bar{\Gamma}$ of $\Gamma$ is $2$-regular and the two neighbors of each vertex in $\bar{\Gamma}$ are non-adjacent. In other words, $\bar{\Gamma}$ is a disjoint union of cycles with length at least $4$. Note that $\lambda_{\min}(\Gamma)\in[-3,-2)$ implies that the second largest eigenvalue of $\bar{\Gamma}$ is greater than $1$. Hence, either $\bar{\Gamma}$ is the complement of a cycle with length at least $7$ or the complement of the disjoint unision of at least $2$ cycles with length at least $4$. This shows that $n\geq7$. 
	
	{\rm (ii)}	Assume $D(\Gamma)\geq3$. Let $x_0\sim x_1\sim x_2\sim x_{3}$ be a path where $x_i\in\Gamma_i(x_0)$ for $i=1,2,3$ and let $\Gamma''$ be the subgraph of $\Gamma$ induced on the set $\{x_0\}\cup\Gamma_1(x_0)\cup(\Gamma_2(x_0)\cap\Gamma_1(x_3))\cup\{x_3\}$. It follows immediately that
	\begin{itemize}
		\item[(a)] for any vertex $w_1\in \Gamma_2(x_0)\cap\Gamma_1(x_3)$, it has exactly $k-1$ neighbors in the set $\Gamma_1(x_0)$, as $d(x_0,w_1)=2$.
	\end{itemize}
We claim that $d(w_2,x_3)=2$ for any vertex $w_2\in \Gamma_1(x_0)$, which will be shown at the end of the proof. Thus, 
\begin{itemize}
	\item[(b)] for any vertex $w_2\in \Gamma_1(x_0)$, it also has exactly $k-1$ neighbors in the set $\Gamma_2(x_0)\cap\Gamma_1(x_3)$ as $d(w_2,x_3)=2$.
\end{itemize}
By double-counting the edges between the two sets $\Gamma_1(x_0)$ and $\Gamma_2(x_0)\cap\Gamma_1(x_3)$, we have $|\Gamma_2(x_0)\cap\Gamma_1(x_3)|=|\Gamma_1(x_0)|=k$ and thus $\Gamma_2(x_0)\cap\Gamma_1(x_3)=\Gamma_1(x_3)$. Until now, it is not hard to see that every vertex in $\Gamma''$ has valency $k$ and we conclude $\Gamma''=\Gamma$, as $\Gamma$ is connected. From (a) and (b), we find that the quotient matrix of $\Gamma$ relative to the partition $\{\{x_0\},\Gamma_1(x_0),\Gamma_2(x_0)\cap\Gamma_1(x_3),\{x_3\}\}$ is the $4\times4$ matrix
	\begin{equation*}\label{eq: c=k-1 d=2 quotient}
		\left(
		\begin{array}{cccc}
			0 & k & 0 & 0 \\
			1 & 0 & k-1 & 0 \\
			0 & k-1 & 0 & 1 \\
			0 & 0 & k & 0 \\
		\end{array}
		\right)
	\end{equation*}
	with smallest eigenvalue $-k$. By Lemma \ref{lem: interlacing} {\rm (ii)}, we have $-k\geq\lambda_{\min}(\Gamma)\geq-3$ and hence $k=3$. Using (a) and (b) again, we conclude that $\Gamma$ is the $3$-cube.
	
Now we prove our claim.	As $d(x_1,x_3)=2$, we have $2\leq k-1=|\Gamma_1(x_1)\cap \Gamma_1(x_3)|\leq |\Gamma_2(x_0)\cap\Gamma_1(x_3)|$. So we can find two distinct vertices $w_3$ and $w_4$ in  $\Gamma_2(x_0)\cap\Gamma_1(x_3)$. From (a) we obtain, for $i=3,4$, that
\begin{equation}\label{eq:distance 2 1}
	|\Gamma_1(w_i)\cap\Gamma_1(x_0)|=k-1,
	\end{equation}
and futher 
\begin{equation}\label{eq:distance 2 2}
	\Gamma_1(w_i)=(\Gamma_1(w_i)\cap\Gamma_1(x_0))\cup\{x_3\}.
\end{equation}
This tells us $w_3\not\sim w_4$. Note that $d(w_3,w_4)=2$. Thus, $k-1=|\Gamma_1(w_3)\cap\Gamma_1(w_4)|=|(\Gamma_1(w_3)\cap\Gamma_1(w_4)\cap\Gamma_1(x_0))\cup\{x_3\}|$, that is, 
\begin{equation}\label{eq:distance 2 3}
	|\Gamma_1(w_3)\cap\Gamma_1(w_4)\cap\Gamma_1(x_0)|=k-2.  
\end{equation}
By the inclusion-exclusion principle and \eqref{eq:distance 2 1} and \eqref{eq:distance 2 3}, we have 
\[\begin{split}
	|(\Gamma_1(w_3)\cup\Gamma_1(w_4))\cap\Gamma_1(x_0)|&=	|(\Gamma_1(w_3)\cap\Gamma_1(x_0))\cup(\Gamma_1(w_4)\cap\Gamma_1(x_0))|\\
	&=|\Gamma_1(w_3)\cap\Gamma_1(x_0)|+|\Gamma_1(w_4)\cap\Gamma_1(x_0)|-|\Gamma_1(w_3)\cap\Gamma_1(w_4)\cap\Gamma_1(x_0)|\\
	&=2(k-1)-(k-2)\\
	&=k=|\Gamma_1(x_0)|.
	\end{split}
\]
Hence, $\Gamma_1(x_0)\subset\Gamma_1(w_3)\cup\Gamma_1(w_4)$ and the claim holds. The proof is completed.
\end{proof}

\begin{proof}[\rm\textbf{Proof of Theorem \ref{c equals k or k-1}}]To prove this theorem, it is sufficient to show that the complement of the disjoint union of cycles and the $3$-cube are $1$-integrable by Lemma \ref{c is k} and Lemma \ref{c is k-1}.
	
Assume that $\Gamma$ is the complement of disjoint union of $m$ cycles $C_1, C_2,\ldots, C_m$, where the length of $C_i$ is $\ell_i$ and $V(C_i)=\{x_{i,1},\ldots,x_{i,\ell_i}\}$,  and that two vertices $x_{i,p}$ and $x_{i,q}$ are adjacent in the cycle $C_i$ if and only if $p-q\equiv \pm1\pmod {\ell_i}$. For each vertex $x_{i,p}$ of $\Gamma$, define the integral vector $\mathbf{x}_{i,p}$ as follows:
\[
\mathbf{x}_{i,p}=\left\{
\begin{array}{ll}
 \mathbf{e}+\mathbf{e}_{i,p}-\mathbf{e}_{i,p+1} &\text{ if }p<\ell_i,\\
 \mathbf{e}+\mathbf{e}_{i,\ell_i}-\mathbf{e}_{i,1}& \text{ if }p=\ell_i,
\end{array}
\right.
\]
where $\{\mathbf{e},\mathbf{e}_{i,p_i}\mid i=1,\ldots,m,p_i=1,\ldots,\ell_i\}$ is a set of orthonormal integral vectors. This shows that $\Gamma$ is $1$-integrable.

As for the case where $\Gamma$ is the $3$-cube, it is straightforward to check that $\Gamma$ is $1$-integrable. This completes the proof. 
\end{proof}

\section{Proof of Theorem \ref{c equals 9}} \label{proof of the case where c is at least k-2}
In this section, we will mainly prove Theorem \ref{c equals 9}. So we will always assume, in this section, that $\Gamma$ is a connected $1$-integrable sesqui-regular graph with parameters $(n,k,c)$, where $c\leq k-2$, and smallest eigenvalue $\lambda_{\min}(\Gamma)\in[-3,-2)$, and that $\{\mathbf{x}\mid x\in V(\Gamma)\}$ is a family of integral vectors which satisfy \eqref{def: s-integrable} with $s=1$. 

Assume $\{\mathbf{x}\mid x\in V(\Gamma)\}\subset \mathbb{Z}^m$ for some integer $m$. For $i=1,\ldots,m$, let $\mathbf{e}_{i}$ be the vector whose $i$-th entry is $1$ and the others are zero. Since the norm of the integral vector $\mathbf{x}$ is $3$, $\mathbf{x}$ is a $(0,\pm1)$-vector, that is, there exist $i_1,i_2,i_3\in\{1,2,\ldots,m\}$ and $a_{i_1},a_{i_2},a_{i_3}\in\{1,-1\}$ such that $\mathbf{x}$ can be uniquely expressed as \[\mathbf{x}=a_{i_1}\mathbf{e}_{i_1}+a_{i_2}\mathbf{e}_{i_2}+a_{i_3}\mathbf{e}_{i_3}.\] We call the set $\{i_1,i_2,i_3\}$ the \emph{support} of $\mathbf{x}$, and denote it by $\operatorname{supp}(\mathbf{x})$. Also, we define a mapping $\sigma_x:\{1,2,\ldots,m\}\to\{0,\pm1\}$ such that 
	\[
\sigma_x(i)=\left\{
\begin{array}{ll}
	a_i & \text{if } i\in \operatorname{supp}(\mathbf{x}), \\
	0 & \text{otherwise}.
\end{array}
\right.
\]
Let $x$ and $y$ be two distinct vertices of $\Gamma$. It follows that
\begin{equation}\label{eq inner product}
	(\mathbf{x},\mathbf{y})=\sum\nolimits_{\ell=1}^m\sigma_x(\ell)\sigma_y(\ell)=\sum\nolimits_{\ell\in \operatorname{supp}(\mathbf{x})\cap\operatorname{supp}(\mathbf{y})}\sigma_x(\ell)\sigma_y(\ell)=\left\{
	\begin{array}{ll}
		1 & \text{if } x\sim y, \\
		0 & \text{if } x\not\sim y.
	\end{array}
	\right.
	\end{equation}
Remark that if $\operatorname{supp}(\mathbf{x})\cap\operatorname{supp}(\mathbf{y})=\{i\}$, we will write $\mathbf{e}_{\operatorname{supp}(\mathbf{x})\cap\operatorname{supp}(\mathbf{y})}$ instead of $\mathbf{e}_{i}$  for convenience.

There is an easy observation as follows:
\begin{lem}\label{lem: general common support}
For any two distinct vertices $x$ and $y$ of $\Gamma$, $|\operatorname{supp}(\mathbf{x})\cap\operatorname{supp}(\mathbf{y})|=1$ or $3$ holds if $x\sim y$, and 
$|\operatorname{supp}(\mathbf{x})\cap\operatorname{supp}(\mathbf{y})|=0$ or $2$ holds if $x\not\sim y$.
\end{lem}
\begin{proof}
It follows straightforward from \eqref{eq inner product} .	
\end{proof} 
Let 
\[
S=\{x\in V(\Gamma)\mid \text{ there exists }x'\neq x\text{ with }\operatorname{supp}(\mathbf{x'})=\operatorname{supp}(\mathbf{x})\}.
\]

\begin{lem}\label{lem: 3 common neighbor}
If $S\neq\emptyset$, then for a given vertex $x\in S$, there exists a unique vertex $x'\neq x$ such that $\operatorname{supp}(\mathbf{x'})=\operatorname{supp}(\mathbf{x})$ holds. In particular, $x'$ is adjacent to $x$.
\end{lem}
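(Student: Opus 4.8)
The plan is to first establish uniqueness, then adjacency. For uniqueness, suppose $x \in S$ and there are two distinct vertices $x', x''$, both different from $x$, with $\operatorname{supp}(\mathbf{x}') = \operatorname{supp}(\mathbf{x}'') = \operatorname{supp}(\mathbf{x}) =: \{i_1,i_2,i_3\}$. Each of $\mathbf{x}, \mathbf{x}', \mathbf{x}''$ is a $(0,\pm1)$-vector supported exactly on $\{i_1,i_2,i_3\}$, so each is determined by a sign pattern in $\{\pm1\}^3$, and the three patterns are pairwise distinct (distinct vertices have distinct integral vectors). For any two distinct vertices $u,v$ among $x,x',x''$ we have, by \eqref{eq inner product}, that $(\mathbf{u},\mathbf{v}) = \sum_{\ell \in \{i_1,i_2,i_3\}} \sigma_u(\ell)\sigma_v(\ell) \in \{0,1\}$; since this is a sum of three terms each equal to $\pm1$, it is odd, hence equals $1$, so $(\mathbf{u},\mathbf{v})=1$ and $u \sim v$. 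Thus $x,x',x''$ are pairwise adjacent and pairwise at inner product $1$, meaning $\{x,x',x''\}$ induces a triangle. I then want to derive a contradiction with $c \le k-2$. The idea is that $x, x', x''$ have the same common neighborhood: if $w$ is a neighbor of $x$ with $\operatorname{supp}(\mathbf{w}) \cap \{i_1,i_2,i_3\} = \{j\}$ a single index, then $\sigma_w(j)\sigma_x(j) = 1$; since the three sign patterns of $x,x',x''$ on $\{i_1,i_2,i_3\}$ can agree or differ at coordinate $j$, I need to check they must in fact all agree there — this follows because two patterns differing only at one of the three coordinates would have inner product $1$ on the other two minus $1$ on that one, i.e. $1$, which is fine, so actually I need a sharper argument here.

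Let me reorganize. The cleaner route: once $x,x',x''$ pairwise adjacent with common support $\{i_1,i_2,i_3\}$, examine the $3\times 3$ matrix whose rows are the sign patterns of $\mathbf{x},\mathbf{x}',\mathbf{x}''$ restricted to $\{i_1,i_2,i_3\}$. Its Gram matrix is $\begin{pmatrix}3&1&1\\1&3&1\\1&1&3\end{pmatrix}$, which is nonsingular, so the three patterns are linearly independent in $\mathbb{R}^3$, hence form a basis; in particular every standard basis vector $\mathbf{e}_{i_t}$ is a rational combination of $\mathbf{x},\mathbf{x}',\mathbf{x}''$. Now take any common neighbor $w$ of, say, $x$ and $x'$; from $(\mathbf{w},\mathbf{x}) = (\mathbf{w},\mathbf{x}') = 1$ and the structure, $w$ is adjacent to $x''$ as well (since $\operatorname{supp}(\mathbf{w})$ meets $\{i_1,i_2,i_3\}$ in a set whose contribution is forced), so $\Gamma_1(x) \cap \Gamma_1(x')$ is contained in $\Gamma_1(x'')$, and symmetrically all three pairwise common neighborhoods coincide; call this set $N$. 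Counting: $\Gamma_1(x) \supseteq \{x',x''\} \cup N$ with $x',x'' \notin N$, so $k = |\Gamma_1(x)| \ge |N| + 2 = c + 2$ once I show $|N| = c$. But $x' \in \Gamma_2(x)$ is false — $x' \sim x$ — so I cannot use $c$ directly via distance-$2$. Instead I should pick a vertex genuinely at distance $2$: since $c \le k-2 < k = n-1$ is impossible only if... actually I should argue $\Gamma$ is not complete (as in Lemma \ref{c is k-1}, $\lambda_{\min} < -2$ forces $\Gamma_2(x) \ne \emptyset$), pick $y \in \Gamma_2(x)$, note $\operatorname{supp}(\mathbf{y})$ meets $\{i_1,i_2,i_3\}$ in $0$ or, being non-adjacent to $x$, possibly in $2$ elements, and use the interplay with $x',x''$ to produce the contradiction.

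The main obstacle I anticipate is precisely pinning down why a triangle $x,x',x''$ with identical support cannot coexist with $c \le k-2$: the bookkeeping of how $\operatorname{supp}(\mathbf{w})$ intersects $\{i_1,i_2,i_3\}$ for a common neighbor $w$, and then converting the resulting ``$\Gamma_1(x)$, $\Gamma_1(x')$, $\Gamma_1(x'')$ all share the same $c$ common neighbors with a fixed $y\in\Gamma_2(x)$'' into a valency count that overshoots $k$. For the final ``In particular'' clause, adjacency of $x$ and its unique partner $x'$ is immediate from the odd-sum argument above: $(\mathbf{x},\mathbf{x}') = \sum_{\ell \in \operatorname{supp}(\mathbf{x})} \sigma_x(\ell)\sigma_{x'}(\ell)$ is a sum of three $\pm1$'s, hence odd, hence $1$, so $x \sim x'$ by \eqref{eq inner product}. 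I will present uniqueness and adjacency together, leading with the odd-sum observation since it does double duty.
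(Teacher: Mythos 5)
Your adjacency argument is fine: for two distinct vertices with the same $3$-element support, the inner product is a sum of three terms each equal to $\pm1$, hence odd, hence equal to $1$ since it must lie in $\{0,1\}$; this gives $x\sim x'$. But the uniqueness half of the lemma is where your proposal has a genuine gap, and you flag it yourself: you reduce to the statement that $x,x',x''$ would be pairwise adjacent with pairwise inner products $1$, and then you go looking for a contradiction with $c\leq k-2$ via common-neighbor counts, quotient arguments, and vertices in $\Gamma_2(x)$ --- a route you never complete and which is in any case the wrong place to look. The configuration is already impossible for a purely local reason that needs no graph parameters at all. Since $(\mathbf{x},\mathbf{x}')=1$ and both vectors are $\pm1$-vectors on the same three coordinates, exactly one of the three coordinate products is $-1$, i.e.\ $\mathbf{x}'$ is obtained from $\mathbf{x}$ by flipping exactly one sign; there are only three such vectors. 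If $\mathbf{x}'$ and $\mathbf{x}''$ flip different coordinates (and they must, being distinct), then they disagree in two coordinates and agree in one, so $(\mathbf{x}',\mathbf{x}'')=-1\notin\{0,1\}$, contradicting \eqref{eq inner product}. This is exactly the paper's proof. Note that this also exposes a small inconsistency in your own write-up: you assert that all three pairwise inner products equal $1$ (by the odd-sum argument) and simultaneously write down the Gram matrix $\left(\begin{smallmatrix}3&1&1\\1&3&1\\1&1&3\end{smallmatrix}\right)$ as if it were realizable; it is positive definite as a real Gram matrix, but it is not realizable by three distinct $\pm1$-vectors supported on the same three coordinates, which is precisely the contradiction you were missing.

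So: keep your odd-sum observation (it does double duty for the ``in particular'' clause and for forcing $(\mathbf{x},\mathbf{y})=1$ in the uniqueness argument), but replace the entire planned excursion through $c\leq k-2$, $\Gamma_2(x)$, and valency counting with the one-line sign-flip argument above. As written, the proposal does not constitute a proof of uniqueness.
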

\begin{proof}
	Assume  $\mathbf{x}=a_{i_1}\mathbf{e}_{i_1}+a_{i_2}\mathbf{e}_{i_2}+a_{i_3}\mathbf{e}_{i_3}$, where $a_{i_1},a_{i_2},a_{i_3}\in\{1,-1\}$. If a vertex $y$ distinct from $x$ satisfies $\operatorname{supp}(\mathbf{y})=\operatorname{supp}(\mathbf{x})=\{i_1,i_2,i_3\}$, then $y\sim x$ by Lemma \ref{lem: general common support} and hence $(\mathbf{y},\mathbf{x})=1$. This implies that $\mathbf{y}$ equals one of the following vectors:
	\begin{equation}\label{eq: 3 common neighbor}
-a_{i_1}\mathbf{e}_{i_1}+a_{i_2}\mathbf{e}_{i_2}+a_{i_3}\mathbf{e}_{i_3},~a_{i_1}\mathbf{e}_{i_1}-a_{i_2}\mathbf{e}_{i_2}+a_{i_3}\mathbf{e}_{i_3},~a_{i_1}\mathbf{e}_{i_1}+a_{i_2}\mathbf{e}_{i_2}-a_{i_3}\mathbf{e}_{i_3}.
	\end{equation}
	Since any two distinct vectors in \eqref{eq: 3 common neighbor} have inner product $-1$, we obtain the uniqueness.  
\end{proof}

For each vertex $x\in S$, we call the unique vertex $x'$ with $\operatorname{supp}(\mathbf{x'})=\operatorname{supp}(\mathbf{x})$ the \emph{mate} of $x$.

\begin{lem}\label{lem: neighbor 1 common}
	Given a vertex $x$, 
	\begin{enumerate}
		\item if a vertex $y$ is adjacent to $x$ but not the mate of $x$, then $|\operatorname{supp}(\mathbf{x})\cap\operatorname{supp}(\mathbf{y})|=1$ and 
		$(\mathbf{x},\mathbf{e}_{\operatorname{supp}(\mathbf{x})\cap\operatorname{supp}(\mathbf{y})})=(\mathbf{y},\mathbf{e}_{\operatorname{supp}(\mathbf{x})\cap\operatorname{supp}(\mathbf{y})})$.
		\item if $\{i_1,i_2\}\subset \operatorname{supp}(\mathbf{x})$, then $|\{y\in V(\Gamma)\mid (\mathbf{y},\sigma_x(i_1)\mathbf{e}_{i_1}+\sigma_x(i_2)\mathbf{e}_{i_2})=2\}|\leq 2$, with the equality holds if and only if $x\in S$.
	\end{enumerate}
\end{lem}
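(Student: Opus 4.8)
For part~(i), the plan is to invoke the structural results already established. Since $y\sim x$, Lemma~\ref{lem: general common support} forces $|\operatorname{supp}(\mathbf{x})\cap\operatorname{supp}(\mathbf{y})|\in\{1,3\}$. I would rule out the value $3$: it would mean $\operatorname{supp}(\mathbf{x})=\operatorname{supp}(\mathbf{y})$ (both being $3$-element sets), and then Lemma~\ref{lem: 3 common neighbor} would identify $y$ as the mate of $x$, contradicting the hypothesis that $y$ is not the mate of $x$. Hence the intersection is a single index $i$, and \eqref{eq inner product} becomes $\sigma_x(i)\sigma_y(i)=(\mathbf{x},\mathbf{y})=1$; since the two factors lie in $\{1,-1\}$, they must be equal, which is exactly the claimed identity $(\mathbf{x},\mathbf{e}_{\operatorname{supp}(\mathbf{x})\cap\operatorname{supp}(\mathbf{y})})=(\mathbf{y},\mathbf{e}_{\operatorname{supp}(\mathbf{x})\cap\operatorname{supp}(\mathbf{y})})$.

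For part~(ii), write $\mathbf{v}=\sigma_x(i_1)\mathbf{e}_{i_1}+\sigma_x(i_2)\mathbf{e}_{i_2}$ and $\operatorname{supp}(\mathbf{x})=\{i_1,i_2,i_3\}$. First I would record the elementary inequality: for any vertex $y$ we have $(\mathbf{y},\mathbf{v})=\sigma_y(i_1)\sigma_x(i_1)+\sigma_y(i_2)\sigma_x(i_2)\le 2$ since $\mathbf{y}$ is a $(0,\pm1)$-vector, with equality exactly when $\sigma_y(i_1)=\sigma_x(i_1)$ and $\sigma_y(i_2)=\sigma_x(i_2)$; in particular $x$ itself lies in the set. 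Next I would analyze a hypothetical second member $y\neq x$: it agrees with $\mathbf{x}$ in the coordinates $i_1,i_2$, so $\mathbf{y}=\sigma_x(i_1)\mathbf{e}_{i_1}+\sigma_x(i_2)\mathbf{e}_{i_2}+\varepsilon\mathbf{e}_j$ for some index $j\notin\{i_1,i_2\}$ and some $\varepsilon\in\{1,-1\}$. Computing $(\mathbf{x},\mathbf{y})$ via \eqref{eq inner product}: if $j\neq i_3$ then $(\mathbf{x},\mathbf{y})=2$, which is impossible for distinct vertices by \eqref{eq inner product}; hence $j=i_3$, and since $y\neq x$ we get $\varepsilon=-\sigma_x(i_3)$. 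This pins $\mathbf{y}$ down to the single vector $\sigma_x(i_1)\mathbf{e}_{i_1}+\sigma_x(i_2)\mathbf{e}_{i_2}-\sigma_x(i_3)\mathbf{e}_{i_3}$, which gives the bound $\le 2$. Since this vector has the same support as $\mathbf{x}$, a vertex carrying it exists precisely when $x$ has a mate, i.e.\ when $x\in S$ (choosing the labeling of $\operatorname{supp}(\mathbf{x})$ so that $i_3$ is the coordinate flipped by the mate, via \eqref{eq: 3 common neighbor}); this is the equality case.

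The genuinely load-bearing step — and the one I expect to be the main obstacle — is the observation in~(ii) that the constraint $(\mathbf{x},\mathbf{y})\in\{0,1\}$ for distinct vertices forces the third coordinate of any second member to be exactly $i_3$ rather than some fresh index; this is precisely what collapses an a priori unbounded count down to at most two. Everything else — the $\le 2$ inequality itself, the reduction of part~(i) to Lemma~\ref{lem: general common support} and Lemma~\ref{lem: 3 common neighbor}, and the identification of the extra vertex with the mate — is bookkeeping with \eqref{eq inner product}.
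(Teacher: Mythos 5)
Your proof is correct and follows essentially the same route as the paper's: part (i) is the paper's one-line reduction to Lemma~\ref{lem: general common support} together with the uniqueness of the mate, and in part (ii) your load-bearing step --- that $(\mathbf{x},\mathbf{y})\in\{0,1\}$ forces the third support index of any second member of the set to be $i_3$ with sign $-\sigma_x(i_3)$ --- is exactly the paper's argument. One shared imprecision worth noting (present in the paper's proof as well): the ``if'' direction of the equality criterion is not fully justified, since when $x\in S$ the mate could flip the coordinate $i_1$ or $i_2$ rather than $i_3$ and would then fail to lie in the set for the given pair $\{i_1,i_2\}$; your parenthetical about ``choosing the labeling'' does not resolve this because $i_1,i_2$ are fixed by hypothesis, but this matches the paper's own level of rigor and only the ``$\leq 2$'' bound and the ``equality implies $x\in S$'' direction are used later.
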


\begin{proof}
{\rm (i)} This is straightforward by Lemma \ref{lem: general common support}, since $x$ and $y$ are not mates.

{\rm (ii)} Assume $\operatorname{supp}(\mathbf{x})=\{i_1,i_2,i_3\}$ and $\mathbf{x}=a_{i_1}\mathbf{e}_{i_1}+a_{i_2}\mathbf{e}_{i_2}+a_{i_3}\mathbf{e}_{i_3}$.
If $y$ is a vertex distinct from $x$ with  $(\mathbf{y},a_{i_1}\mathbf{e}_{i_1}+a_{i_2}\mathbf{e}_{i_2})=2$, then  
$\mathbf{y}=a_{i_1}\mathbf{e}_{i_1}+a_{i_2}\mathbf{e}_{i_2}-a_{i_3}\mathbf{e}_{i_3}$ as $(\mathbf{x},\mathbf{y})\in\{0,1\}$. In other words, $y$ is the mate of $x$ and $x\in S$. This shows {\rm (ii)}.
\end{proof}

For the set $S$, we claim the following.
\begin{cla}\label{cla: no same support}
$S=\emptyset$ holds if $c\geq 9$.
\end{cla}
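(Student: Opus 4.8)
The plan is to suppose $S\neq\emptyset$, fix $x\in S$ with mate $x'$, and derive a contradiction. Put $\{i_1,i_2,i_3\}=\operatorname{supp}(\mathbf x)=\operatorname{supp}(\mathbf{x'})$. By Lemma~\ref{lem: 3 common neighbor} we have $x\sim x'$, hence $(\mathbf x,\mathbf{x'})=1$; since $\mathbf x$ and $\mathbf{x'}$ are $(0,\pm1)$-vectors with the same support, this forces them to differ in exactly one coordinate, which I relabel so that $\sigma_x(i_1)=-\sigma_{x'}(i_1)$ while $\sigma_x(i_2)=\sigma_{x'}(i_2)$ and $\sigma_x(i_3)=\sigma_{x'}(i_3)$.

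First I would show that $x$ and $x'$ are adjacent twins. If $u\sim x$ and $u\neq x'$, then $u$ is not the mate of $x$, so Lemma~\ref{lem: neighbor 1 common}(i) gives $\operatorname{supp}(\mathbf x)\cap\operatorname{supp}(\mathbf u)=\{i_t\}$ with $\sigma_u(i_t)=\sigma_x(i_t)$; the value $t=1$ is impossible, since it would give $(\mathbf u,\mathbf{x'})=\sigma_u(i_1)\sigma_{x'}(i_1)=-1\notin\{0,1\}$, so $t\in\{2,3\}$ and then $(\mathbf u,\mathbf{x'})=1$, i.e. $u\sim x'$. By symmetry $N(x)\setminus\{x'\}=N(x')\setminus\{x\}=:N_0$ with $|N_0|=k-1$, each $u\in N_0$ has $\operatorname{supp}(\mathbf u)\cap\{i_1,i_2,i_3\}$ equal to $\{i_2\}$ or $\{i_3\}$ with the matching sign, and (since $|(\mathbf y,\mathbf x)-(\mathbf y,\mathbf{x'})|\le1$ for all $y\notin\{x,x'\}$) no vertex other than $x,x'$ uses the coordinate $i_1$.

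Next I would exhibit a vertex $w$ at distance $2$ from $x$ (one exists because $\lambda_{\min}(\Gamma)<-1$ forces $\Gamma\neq K_n$ and, together with $|N(x)|=k$, rules out $\Gamma_2(x)=\emptyset$). The twin property gives $d(x',w)=2$ and $C:=N(x)\cap N(w)=N(x')\cap N(w)\subseteq N_0$ with $|C|=c$. Writing $C=C_2\sqcup C_3$ according to which of $i_2,i_3$ lies in the support, for $z\in C_j$ we get $\mathbf z=\sigma_x(i_j)\mathbf e_{i_j}+r_z$ with $r_z$ of norm $2$ supported off $\{i_1,i_2,i_3\}$, and inside each $C_j$ we have $(r_z,r_{z'})\in\{-1,0\}$ with $z\sim z'\iff(r_z,r_{z'})=0$. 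From $(\mathbf x,\mathbf w)=(\mathbf{x'},\mathbf w)=0$ one deduces $\operatorname{supp}(\mathbf w)\cap\{i_1,i_2,i_3\}\in\{\emptyset,\{i_2,i_3\}\}$. If it is empty then $(r_z,\mathbf w)=1$ for every $z\in C$, which pins $r_z$ down to meet $\operatorname{supp}(\mathbf w)$ in exactly one coordinate (with matching sign), and counting over the three coordinates of $\mathbf w$ gives $|C_2|,|C_3|\le6$. If it is $\{i_2,i_3\}$, say with $\sigma_w(i_2)=\sigma_x(i_2)$, then $\sigma_w(i_3)=-\sigma_x(i_3)$, the relation $(\mathbf z,\mathbf w)=1$ becomes unsolvable for $z\in C_3$, so $C_3=\emptyset$ and $C=C_2$.

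Finally I would bring in interlacing (Lemma~\ref{lem: interlacing}). For an independent $I\subseteq C$ the induced subgraph $\Gamma[\{x,x',w\}\cup I]$ equals $(K_2\cup K_1)\vee\overline{K_{|I|}}$, whose smallest eigenvalue is the least root of $\lambda^3-\lambda^2-3|I|\lambda+|I|$ and falls below $-3$ once $|I|\ge4$; hence $\alpha(\Gamma[C])\le3$. Likewise $K_2\vee\overline{K_{|I'|}}$ for independent $I'\subseteq N_0$ has smallest eigenvalue $\tfrac12(1-\sqrt{1+8|I'|})$, which is below $-3$ once $|I'|\ge7$, so $\alpha(\Gamma[N_0])\le6$. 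With $c=|C|\ge9$ and $\alpha(\Gamma[C])\le3$, $\Gamma[C]$ contains a triangle ($R(3,4)=9$), whose vertices correspond to pairwise orthogonal roots; I would then play this off against the explicit shape of the $r_z$'s, the bound $\alpha(\Gamma[C])\le3$, and a fresh use of sesqui-regularity at a distance-$2$ pair inside $C$ (whose $c$ common neighbours force supports overcrowding the bounded supply of available coordinates) to conclude $c\le8$, contradicting $c\ge9$. The hard part is precisely this last step: the local vector bookkeeping alone only caps $|C|$ near $12$, and the densest admissible configurations for $\Gamma[C]$ — lexicographic blow-ups of a prism — already sit at smallest eigenvalue exactly $-3$, so no single interlacing inequality suffices; the contradiction must come from the interaction of the clique/independence constraints with the precise structure of $\mathbf w$ and of the roots, together with applying sesqui-regularity at more than one distance-$2$ pair.
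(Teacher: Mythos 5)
Your intermediate steps are essentially sound: the twin structure of $x$ and its mate $x'$ (so that no third vertex uses the coordinate $i_1$ where they differ), the classification of $\operatorname{supp}(\mathbf{w})\cap\operatorname{supp}(\mathbf{x})$ for $w\in\Gamma_2(x)$, the bound $|C|\le 12$ from the vector bookkeeping, and the interlacing bounds $\alpha(\Gamma[C])\le 3$ and $\alpha(\Gamma[N_0])\le 6$ all check out. But the proof has a genuine gap exactly where you flag it: the final contradiction ``$c\le 8$'' is never derived, only described as something to be extracted from ``the interaction of the clique/independence constraints with the precise structure of $\mathbf{w}$ and of the roots.'' That is a plan, not an argument, and in this instance the plan cannot succeed as stated. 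The paper's own Lemma \ref{lem:prepartion supp} shows that when $S\neq\emptyset$ and $c\ge 9$, the local picture at a single distance-two pair $(x,y)$ with $x\in S$ is perfectly consistent and is in fact completely determined: $c=9$, $|\Gamma_1(x)\cap\Gamma_1(y)\cap S|=6$, and the common neighbours split as three mate-pairs through one coordinate of $\mathbf{x}$ plus three further vertices through the other. So no amount of local analysis around one pair $(x,x',w)$ — which is all your Ramsey/interlacing step uses — can push $c$ below $9$.

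The paper's actual contradiction is global. It first shows (Lemmas \ref{lem: 3 common no neighbor 0 common}--\ref{lem: support non S}) that $S$ induces a clique of size $2s$ whose vectors all share one coordinate $\mathbf{e}_m$, that every vertex outside $S$ has support inside $\{1,\dots,s\}$ and exactly $6$ neighbours in $S$, and that $V(\Gamma)-S$ is the block graph of a Steiner triple system $STS(s)$. Only then does it reach a contradiction, by double counting the edges between $S$ and $V(\Gamma)-S$ and invoking the regularity of $\Gamma$, which forces $s=3$ against $s\ge 4$. To repair your proof you would have to replace the hoped-for local step with an argument of this global type (or find a different global obstruction); ``a fresh use of sesqui-regularity at more than one distance-$2$ pair'' is pointing in the right direction, but as written it does not identify the counting identity that actually fails.
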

In order to make the proof of Theorem \ref{1 integrability} more clear and logical, we will assume $S=\emptyset$ until Theorem \ref{1 integrability} is proved. After that, we will prove Claim \ref{cla: no same support}, as it is quite involved.

\begin{lem}\label{lem: non neighbor 2 common}
Assume $S=\emptyset$ and $c\geq8$. Given a vertex $x$, if there is a vertex $y$ satisfying $|\operatorname{supp}(\mathbf{y})\cap\operatorname{supp}(\mathbf{x})|=2$, then for each vertex $z\in\Gamma_2(x)$, $|\operatorname{supp}(\mathbf{z})\cap\operatorname{supp}(\mathbf{x})|=2$ holds. 
\end{lem}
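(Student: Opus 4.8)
The plan is to argue directly with the integral representation, after a convenient normalization. Replacing some $\mathbf{e}_i$ by $-\mathbf{e}_i$ and permuting the coordinates are lattice isometries that preserve all inner products among the vectors $\mathbf{x}$, so I may assume $\mathbf{x}=\mathbf{e}_1+\mathbf{e}_2+\mathbf{e}_3$. Since $S=\emptyset$ there are no mates, so Lemmas \ref{lem: general common support} and \ref{lem: neighbor 1 common}(i) give: any two distinct adjacent vertices share exactly one support index, on which their $\sigma$-values agree; any two distinct non-adjacent vertices share $0$ or $2$ support indices, on which their $\sigma$-values cancel; and (for distinct vertices) the inner product is always $0$ or $1$, never $-1$, $2$ or $3$. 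In particular $\Gamma_1(x)=N_1\cup N_2\cup N_3$, where $w\in N_i$ means $\operatorname{supp}(\mathbf{w})\cap\{1,2,3\}=\{i\}$, and then $\sigma_w(i)=1$. If $\Gamma_2(x)=\emptyset$ the statement is vacuous, so fix $z\in\Gamma_2(x)$; since $z\not\sim x$ and $z\ne x$, the only alternative to $|\operatorname{supp}(\mathbf{z})\cap\operatorname{supp}(\mathbf{x})|=2$ is $\operatorname{supp}(\mathbf{z})\cap\{1,2,3\}=\emptyset$, which I will rule out.

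First I would normalize the hypothesized vertex $y$. As $|\operatorname{supp}(\mathbf{y})\cap\operatorname{supp}(\mathbf{x})|=2$, Lemma \ref{lem: general common support} forces $y\not\sim x$; by the symmetry of $\mathbf{x}$ among $\{1,2,3\}$ we may take $\operatorname{supp}(\mathbf{y})\cap\{1,2,3\}=\{1,2\}$, and since $(\mathbf{x},\mathbf{y})=0$ the two $\sigma_y$-values on $\{1,2\}$ cancel. Swapping $\mathbf{e}_1,\mathbf{e}_2$ (which fixes $\mathbf{x}$) and flipping the sign of the third basis vector of $\operatorname{supp}(\mathbf{y})$, call it $\mathbf{e}_4$ with $4\notin\{1,2,3\}$, we arrive at $\mathbf{y}=\mathbf{e}_1-\mathbf{e}_2+\mathbf{e}_4$. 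The crucial step is then to bound $|N_2|$ using $y$: for $w\in N_2$ we have $\sigma_w(2)=1$ and $1\notin\operatorname{supp}(\mathbf{w})$, hence $(\mathbf{w},\mathbf{y})=-1$ if $4\notin\operatorname{supp}(\mathbf{w})$ and $(\mathbf{w},\mathbf{y})=-1+\sigma_w(4)$ otherwise; as this must lie in $\{0,1\}$, necessarily $4\in\operatorname{supp}(\mathbf{w})$ and $\sigma_w(4)=1$. Thus every $w\in N_2$ has $\operatorname{supp}(\mathbf{w})=\{2,4,\ell\}$ with $\sigma_w(2)=\sigma_w(4)=1$; two distinct such vertices would have inner product $\geq 1+1=2$ unless they share all three support indices, which $S=\emptyset$ forbids. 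Hence $|N_2|\leq 1$.

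Finally, suppose for contradiction $\operatorname{supp}(\mathbf{z})\cap\{1,2,3\}=\emptyset$. By sesqui-regularity $x$ and $z$ have exactly $c\,(\geq 8)$ common neighbours. Each such neighbour $w$ lies in some $N_i$, $i\in\{1,2,3\}$, and, being adjacent to $z$ but not its mate, shares exactly one index $j\in\operatorname{supp}(\mathbf{z})$ with $\mathbf{z}$, with $\sigma_w(j)=\sigma_z(j)$ and $j\notin\{1,2,3\}$, so $j\ne i$. I would then show the assignment $w\mapsto(i,j)$ is injective on these common neighbours: if two of them had the same $(i,j)$, their $\sigma$-values would agree on both $i$ and $j$, giving inner-product contribution $1+1=2$ from those two indices, which is impossible unless the two supports coincide, i.e.\ the two vertices are equal. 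Consequently the number of common neighbours in $N_1$, and in $N_3$, is at most $|\operatorname{supp}(\mathbf{z})|=3$, while the number in $N_2$ is at most $|N_2|\leq 1$; therefore $c\leq 3+1+3=7$, contradicting $c\geq 8$. Hence $|\operatorname{supp}(\mathbf{z})\cap\operatorname{supp}(\mathbf{x})|=2$ for every $z\in\Gamma_2(x)$.

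The main obstacle is the middle step, namely extracting $|N_2|\leq 1$ from the single vertex $y$: it requires pinning down the normalization of $\mathbf{y}$ and carefully eliminating the forbidden inner-product values $-1$ and $2$ for distinct vertices. Once that is in place, the rest is a short double-counting of common neighbours against the three parts $N_1,N_2,N_3$. I would also make sure to record at the outset exactly which sign identities are being used and that each follows from $S=\emptyset$ together with Lemmas \ref{lem: general common support} and \ref{lem: neighbor 1 common}.
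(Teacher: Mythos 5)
Your proof is correct and follows essentially the same route as the paper: both arguments count the common neighbours of $x$ and $z$ by injectively assigning each one to a pair of coordinates (at most one vertex per signed pair, which is exactly where $S=\emptyset$ enters), and both use the vertex $y$ to force the index $4$ into the support of every neighbour of $x$ meeting $\operatorname{supp}(\mathbf{x})$ in $\{2\}$, thereby cutting the count to at most $7<c$. Your organization --- bounding $|N_2|\leq 1$ once and for all instead of splitting into the two cases $(\mathbf{z},\mathbf{e}_4)\in\{0,1\}$ as the paper does --- is a minor streamlining of the same idea.
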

\begin{proof}By Lemma \ref{lem: general common support}, we have $x\not\sim y$ and then $(\mathbf{x},\mathbf{y})=0$ follows. Without loss of generality, we may assume 
\begin{equation}\label{eq:x y non neighbor 2 common}
\mathbf{x}=\mathbf{e}_1+\mathbf{e}_2+\mathbf{e}_3,~\mathbf{y}=\mathbf{e}_1-\mathbf{e}_2+\mathbf{e}_4.
\end{equation}
Suppose $z\in\Gamma_2(x)$ and $|\operatorname{supp}(\mathbf{z})\cap\operatorname{supp}(\mathbf{x})|=0$. We have  $(\mathbf{z},\mathbf{e}_4)\in\{0,1\}$, as $(\mathbf{z},\mathbf{y})\in\{0,1\}$. 
Now we are going to give a contradiction via counting the number of the common neighbors of $x$ and $z$. A fact is that for each vertex $w\in\Gamma_1(x)\cap\Gamma_1(z)$, 
\begin{equation}\label{eq: common x and z}
|\operatorname{supp}(\mathbf{w})\cap\operatorname{supp}(\mathbf{x})|=1 \text{ and }	|\operatorname{supp}(\mathbf{w})\cap\operatorname{supp}(\mathbf{z})|=1
\end{equation}
hold by Lemma \ref{lem: neighbor 1 common} {\rm (i)}, as $S=\emptyset$. Depending on the value of $(\mathbf{z},\mathbf{e}_4)$, we have the following two cases  to deal with.
\begin{enumerate}
	\item If $(\mathbf{z},\mathbf{e}_4)=1$, then we may let 
	\begin{equation}\label{eq:z 1 non neighbor 2 common}
		\mathbf{z}=\mathbf{e}_4+\mathbf{e}_5+\mathbf{e}_6.
	\end{equation}
For each vertex $w\in\Gamma_1(x)\cap\Gamma_1(z)$, we obtain, by \eqref{eq: common x and z}, that 
\begin{equation}\label{eq:w 1 non neighbor 2 common}
	(\mathbf{w},\mathbf{e}_i+\mathbf{e}_j)=2 \text{ for some }\{i,j\}\in\{\{1,5\},\{1,6\},\{2,4\},\{3,4\},\{3,5\},\{3,6\}\},
\end{equation}
considering $(\mathbf{w},\mathbf{y})\in\{0,1\}$. Thus, \[|\Gamma_1(x)\cap\Gamma_1(z)|\leq|\{\{1,5\},\{1,6\},\{2,4\},\{3,4\},\{3,5\},\{3,6\}\}|=6\]
by Lemma \ref{lem: neighbor 1 common} {\rm (ii)}.
 	\item  If $(\mathbf{z},\mathbf{e}_4)=0$, then we may let
 \begin{equation}\label{eq:z 2 non neighbor 2 common}
 	\mathbf{z}=\mathbf{e}_5+\mathbf{e}_6+\mathbf{e}_7.
 \end{equation}
Similarly, we have, for each vertex $w\in\Gamma_1(x)\cap\Gamma_1(z)$, that 
 \begin{equation}\label{eq:w 1 non neighbor 2 common}
 	(\mathbf{w},\mathbf{e}_i+\mathbf{e}_j)=2 \text{ for some }\{i,j\}\in\{\{1,5\},\{1,6\},\{1,7\},\{2,4\},\{3,5\},\{3,6\},\{3,7\}\},
 \end{equation}
and hence
 \[|\Gamma_1(x)\cap\Gamma_1(z)|\leq|\{\{1,5\},\{1,6\},\{1,7\},\{2,4\},\{3,5\},\{3,6\},\{3,7\}\}|=7.\]
 \end{enumerate} 
This contradicts that $c\geq8$. Therefore $|\operatorname{supp}(\mathbf{z})\cap\operatorname{supp}(\mathbf{x})|=2$ by Lemma \ref{lem: general common support}. Our lemma is proved. 
\end{proof}

\begin{lem}\label{lem: non neighbor 0 common}
	Assume $S=\emptyset$ and $c\geq4$. For each vertex $x$, there exists a vertex $y\in\Gamma_2(x)$ with $|\operatorname{supp}(\mathbf{y})\cap\operatorname{supp}(\mathbf{x})|=0$. 
\end{lem}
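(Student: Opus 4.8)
The plan is to argue by contradiction. Suppose there is a vertex $x$ such that every $y\in\Gamma_2(x)$ satisfies $\operatorname{supp}(\mathbf{y})\cap\operatorname{supp}(\mathbf{x})\neq\emptyset$; since $x\not\sim y$, Lemma \ref{lem: general common support} then forces $|\operatorname{supp}(\mathbf{y})\cap\operatorname{supp}(\mathbf{x})|=2$ for \emph{every} $y\in\Gamma_2(x)$. As a preliminary I would note $\Gamma_2(x)\neq\emptyset$: a connected $k$-regular graph with $\Gamma_2(x)=\emptyset$ is the complete graph $K_{k+1}$, which has smallest eigenvalue $-1\notin[-3,-2)$. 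After normalizing $\mathbf{x}=\mathbf{e}_1+\mathbf{e}_2+\mathbf{e}_3$, every $y\in\Gamma_2(x)$ has $(\mathbf{x},\mathbf{y})=0$ and $\operatorname{supp}(\mathbf{y})=\{i,j,\ell\}$ with $\{i,j\}\subset\{1,2,3\}$, $\ell\notin\{1,2,3\}$ and $\sigma_y(i)=-\sigma_y(j)$, so $\mathbf{y}=\pm(\mathbf{e}_i-\mathbf{e}_j)\pm\mathbf{e}_\ell$.

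Next I would pin down the structure of $\Gamma_2(x)$. Comparing two such vectors with the same pair $\{i,j\}$, the inner product is $\pm 2$ plus a $0,\pm1$ contribution from the $\ell$-coordinates; since the only admissible inner products are $0$ and $1$, this rules out the sign $-2$ (hence both vectors carry the same sign on $\mathbf{e}_i-\mathbf{e}_j$) and forces them to share the same coordinate $\ell=\ell_{ij}$ with opposite signs there. Consequently, for each of the three pairs $\{i,j\}\subset\{1,2,3\}$ there are at most two vertices of $\Gamma_2(x)$ using that pair, so $|\Gamma_2(x)|\leq 6$ and $\Gamma_2(x)$ involves at most three coordinates outside $\{1,2,3\}$.

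I would then fix one $y\in\Gamma_2(x)$, say (after relabelling the free coordinates) $\mathbf{y}=\mathbf{e}_1-\mathbf{e}_2+\mathbf{e}_4$, and examine the $c$ common neighbours of $x$ and $y$. By Lemma \ref{lem: neighbor 1 common}(i) and $S=\emptyset$, any such common neighbour $w$ has $|\operatorname{supp}(\mathbf{w})\cap\operatorname{supp}(\mathbf{x})|=|\operatorname{supp}(\mathbf{w})\cap\operatorname{supp}(\mathbf{y})|=1$ with matching signs, and a short check on which coordinate is shared shows that $w$ is either of ``Kind A'', $\mathbf{w}=\mathbf{e}_1+(\text{a norm-}2\text{ vector supported on coordinates}\geq 5)$, or of ``Kind C'', $\mathbf{w}=\mathbf{e}_3+\mathbf{e}_4+(\text{one coordinate}\geq 5)$. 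The argument behind Lemma \ref{lem: neighbor 1 common}(ii), together with $S=\emptyset$, shows there is at most one Kind C common neighbour, so $x$ and $y$ have at least $c-1\geq 3$ common neighbours of Kind A, all lying in the part of $\Gamma_1(x)$ whose vectors use coordinate $1$.

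The main obstacle is to turn this into a contradiction using only $c\geq 4$ (rather than large $k$). My plan here is to exploit that each Kind A common neighbour $w$ lies in $\Gamma_1(x)\cap\Gamma_1(y)$, so its entire neighbourhood is contained in $(\{x\}\cup\Gamma_1(x)\cup\Gamma_2(x))\cap(\{y\}\cup\Gamma_1(y)\cup\Gamma_2(y))$, a set whose ``$\Gamma_2$-part'' we have just shown is small and rigidly structured; moreover the Kind A vectors pairwise meet in at most one coordinate and then with opposite sign (so their support graph has maximum degree at most $2$), and every induced subgraph of $\Gamma$ has smallest eigenvalue at least $-3$ (Lemma \ref{lem: interlacing}). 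Running a case analysis on how the $\geq 3$ Kind A neighbours sit inside $\Gamma_1(x)$, I expect to produce either a vertex of $\Gamma_2(x)$ whose support is disjoint from $\{1,2,3\}$ — contradicting the standing assumption — or a small induced subgraph whose quotient matrix has smallest eigenvalue below $-3$. I anticipate that the bookkeeping in this final step, tracking the signs and supports of the Kind A neighbours and of the vertices they force to appear, will be the delicate part of the proof.
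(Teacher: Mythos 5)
Your setup is sound and matches the paper's up to a point: you correctly reduce to the case where every $y\in\Gamma_2(x)$ satisfies $|\operatorname{supp}(\mathbf{y})\cap\operatorname{supp}(\mathbf{x})|=2$, and your per-pair analysis of $\Gamma_2(x)$ is on the right track (in fact you undersell it: once two vertices with the same pair $\{i,j\}$ are forced to carry the same sign on $\mathbf{e}_i-\mathbf{e}_j$ and the same outside coordinate $\ell$ with opposite signs, they have equal supports, i.e.\ they are mates, which is impossible since $S=\emptyset$; so there is at most \emph{one} vertex per pair and $|\Gamma_2(x)|\leq 3$, not merely $\leq 6$). The genuine gap is the final step: after classifying the common neighbours of $x$ and $y$ into your Kinds A and C and counting at least $c-1\geq 3$ of Kind A, you do not actually derive a contradiction --- you only announce that a case analysis ``is expected to'' produce one and that the bookkeeping ``will be the delicate part.'' Three Kind A vectors $\mathbf{e}_1+\mathbf{v}_1,\mathbf{e}_1+\mathbf{v}_2,\mathbf{e}_1+\mathbf{v}_3$ with pairwise disjoint $\mathbf{v}_i$ supported on coordinates $\geq5$ are perfectly consistent with all the local constraints you have collected, so no contradiction is available from that data alone; the missing ingredient is a global constraint you never invoke.

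The paper closes the argument quite differently, and its route uses two facts absent from your plan. First, from $|\Gamma_2(x)|\leq3$ and $c\geq4$ it deduces that no vertex is at distance $3$ from $x$ (a path $x\sim x_1\sim x_2\sim x_3$ would put the $c\geq4$ common neighbours of $x_1$ and $x_3$ inside $\Gamma_2(x)$), so $\Gamma_1(y)\subseteq\Gamma_1(x)\cup\Gamma_2(x)$ for each $y\in\Gamma_2(x)$. Second, it uses the section's standing hypothesis $c\leq k-2$ --- which your proposal never touches --- to get $|\Gamma_1(y)\cap\Gamma_2(x)|\geq k-c\geq2$, forcing $|\Gamma_2(x)|=3$ with the induced subgraph a triangle $\{y,y_1,y_2\}$. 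The contradiction then falls out of a short support computation: each $y_t$ must meet $\operatorname{supp}(\mathbf{y})$ in exactly one coordinate with positive sign product and meet $\operatorname{supp}(\mathbf{x})$ in two coordinates, which pins down $\mathbf{y_1},\mathbf{y_2}$ up to symmetry and yields $(\mathbf{y_1},\mathbf{y_2})\leq0$, contradicting $y_1\sim y_2$. I recommend you abandon the Kind A/Kind C bookkeeping and instead bring in $c\leq k-2$ and the distance-$\leq2$ observation to force the triangle structure on $\Gamma_2(x)$; without some such global input your sketch cannot be completed as written.
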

\begin{proof}
Assume that the lemma does not hold. We may let $\mathbf{x}=\mathbf{e}_1+\mathbf{e}_2+\mathbf{e}_3$. From the assumption, we find that $y\in \Gamma_2(x)$ implies $(\mathbf{y},\mathbf{e}_{i_1}-\mathbf{e}_{i_2})=2$ for some $\{i_1,i_2\}\in\{\{1,2\},\{1,3\},\{2,3\}\}$. Note that there exists no another vertex $y'$ such that $(\mathbf{y'},-\mathbf{e}_{i_1}+\mathbf{e}_{i_2})=2$, since $(\mathbf{y'},\mathbf{y})$ should equal $1$ or $0$. This means 
\begin{equation}\label{eq: Gamma2 non neighbor 0 common}
|\Gamma_2(x)|\leq|\{\{1,2\},\{1,3\},\{2,3\}\}|=3 \text{ for every }x
\end{equation} 
by Lemma \ref{lem: neighbor 1 common} {\rm (ii)}, as $S=\emptyset$.

We claim that the diameter of $\Gamma$ is $2$. Otherwise there is a path $x_0\sim x_1\sim x_2\sim x_3$ with $x_i\in\Gamma_i(x_0)$ for $i=1,2,3$. This leads to a contradiction immediately, as $3\geq|\Gamma_2(x_0)|\geq|\Gamma_1(x_1)\cap\Gamma_1(x_3)|=c\geq4$.  

Let us look back on the set $\Gamma_2(x)$. Since the diameter of $\Gamma$ is $2$, for each $y\in \Gamma_2(x)$, we have $\Gamma_1(y)=(\Gamma_1(y)\cap\Gamma_1(x))\cup(\Gamma_1(y)\cap\Gamma_2(x))$. Note that $|\Gamma_1(y)\cap\Gamma_1(x)|=c\leq k-2$ implies $|\Gamma_1(y)\cap\Gamma_2(x)|\geq2$. By applying \eqref{eq: Gamma2 non neighbor 0 common}, we conclude  $|\Gamma_2(x)|=3$ and $\Gamma_2(x)=\{y\}\cup(\Gamma_1(y)\cap\Gamma_2(x))$. In other words, the subgraph of $\Gamma$ induced on $\Gamma_2(x)$ is a $K_3$, as we choose the vertex $y$ in $\Gamma_2(x)$ arbitrarily. Now we may assume $\Gamma_2(x)=\{y,y_1,y_2\}$ and $\mathbf{y}=\mathbf{e}_1-\mathbf{e}_2+\mathbf{e}_4$. For $t=1,2$, since $y_t\sim y$, we have 
\begin{equation}\label{eq:1 non neighbor 0 common}
	|\operatorname{supp}(\mathbf{y_t})\cap\operatorname{supp}(\mathbf{y})|=1
\end{equation} by Lemma \ref{lem: neighbor 1 common} {\rm (i)}. Also, from the assumption, we have
\begin{equation}\label{eq:2 non neighbor 0 common}
 |\operatorname{supp}(\mathbf{y_t})\cap\operatorname{supp}(\mathbf{x})|=2.
\end{equation} 
Hence $(\mathbf{y_t},\mathbf{e}_4)=0$. More precisely, $(\mathbf{y_{t_1}},\mathbf{e}_1-\mathbf{e}_3)=2,~(\mathbf{y_{t_1}},\mathbf{e}_2)=0$ and $(\mathbf{y_{t_2}},-\mathbf{e}_2+\mathbf{e}_3)=2,~(\mathbf{y_{t_2}},\mathbf{e}_1)=0$, where $\{t_1,t_2\}=\{1,2\}$. But in this case, $(\mathbf{y_{t_1}},\mathbf{y_{t_2}})\leq 0$, which contradicts $(\mathbf{y_{t_1}},\mathbf{y_{t_2}})=1$ as $y_{t_1}\sim y_{t_2}$.  Therefore, the lemma holds.
\end{proof}

\begin{lem}\label{lem: conclusion}
Assume $S=\emptyset$ and $c\geq9$. For any two distinct vertices $x$ and $y$, the following hold:
\begin{enumerate}
\item $|\operatorname{supp}(\mathbf{x})\cap\operatorname{supp}(\mathbf{y})|=1$ holds if $x\sim y$, and 
$|\operatorname{supp}(\mathbf{x})\cap\operatorname{supp}(\mathbf{y})|=0$ holds if $x\not\sim y$.
\item If $d(x,y)=2$, then for any $i\in \operatorname{supp}(\mathbf{x})$ and $j\in \operatorname{supp}(\mathbf{y})$, there exists $w\in\Gamma_1(x)\cap\Gamma_1(y)$ such that $(\mathbf{w},\sigma_x(i)\mathbf{e}_{i}+\sigma_y(j)\mathbf{e}_{j})=2$ holds.
\item $c=9$.
\item The diameter of $\Gamma$ is $2$.
\item If $\operatorname{supp}(\mathbf{x})\cap\operatorname{supp}(\mathbf{y})=\{\ell\}$, then for any $i\in \operatorname{supp}(\mathbf{x})-\{\ell\}$ and $j\in \operatorname{supp}(\mathbf{y})-\{\ell\}$, there exists $w\in\Gamma_1(x)\cap\Gamma_1(y)$ such that $(\mathbf{w},\sigma_x(i)\mathbf{e}_{i}+\sigma_y(j)\mathbf{e}_{j})=2$ holds.
\end{enumerate}
\end{lem}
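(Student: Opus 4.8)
The plan is to establish (i)--(v) in order, the workhorse being the counting bijection in (ii), from which (iii) is immediate and which then drives (iv) and (v). Throughout I would use only Lemma~\ref{lem: general common support}, Lemma~\ref{lem: neighbor 1 common}, Lemma~\ref{lem: non neighbor 2 common}, Lemma~\ref{lem: non neighbor 0 common}, and the standing hypotheses $S=\emptyset$, $c\geq 9$. For (i): if $x\sim y$ then $|\operatorname{supp}(\mathbf{x})\cap\operatorname{supp}(\mathbf{y})|\in\{1,3\}$ by Lemma~\ref{lem: general common support}, and the value $3$ would give $\operatorname{supp}(\mathbf{x})=\operatorname{supp}(\mathbf{y})$, i.e.\ $x\in S$; if $x\not\sim y$ the value is $0$ or $2$, and the value $2$ would let Lemma~\ref{lem: non neighbor 2 common} force $|\operatorname{supp}(\mathbf{z})\cap\operatorname{supp}(\mathbf{x})|=2$ for every $z\in\Gamma_2(x)$, contradicting Lemma~\ref{lem: non neighbor 0 common}. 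Two consequences of (i) I would record at once: no vertex has a mate (so Lemma~\ref{lem: neighbor 1 common}(i) applies to every adjacent pair); and for each coordinate $t$ the set $V_t:=\{v\mid t\in\operatorname{supp}(\mathbf{v})\}$ is a clique, its members all carrying the same sign in coordinate $t$, so after replacing suitable $\mathbf{e}_t$ by $-\mathbf{e}_t$ we may assume every $\mathbf{x}$ is a $0/1$-vector with three ones and $(\mathbf{x},\mathbf{y})=|\operatorname{supp}(\mathbf{x})\cap\operatorname{supp}(\mathbf{y})|$.

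For (iii) and (ii): $\Gamma$ is connected and not complete (a complete graph has smallest eigenvalue $-1$), so some $x,y$ have $d(x,y)=2$, and then $\operatorname{supp}(\mathbf{x})\cap\operatorname{supp}(\mathbf{y})=\emptyset$ by (i). By Lemma~\ref{lem: neighbor 1 common}(i) each common neighbor $w$ of $x,y$ meets $\operatorname{supp}(\mathbf{x})$ in one index $p$ and $\operatorname{supp}(\mathbf{y})$ in one index $q$, and $w\mapsto(p,q)$ is injective (two common neighbors sharing $(p,q)$ would have inner product $2$ if their third indices differ, and equal supports otherwise, the latter forcing equality since $S=\emptyset$). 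Hence $c=|\Gamma_1(x)\cap\Gamma_1(y)|\leq 9$; with $c\geq 9$ this yields $c=9$ (giving (iii)) and makes the map a bijection, which is exactly (ii). For (iv): assume the diameter is $\geq 3$ and take a geodesic $x_0\sim x_1\sim x_2\sim x_3$ with $d(x_0,x_3)=3$, so $d(x_1,x_3)=2$; by (ii) there is $w\in\Gamma_1(x_1)\cap\Gamma_1(x_3)$ whose support contains the unique index of $\operatorname{supp}(\mathbf{x}_0)\cap\operatorname{supp}(\mathbf{x}_1)$. Since $w$ is adjacent to $x_1$ and to $x_3$ we get $d(x_0,w)=2$, hence $\operatorname{supp}(\mathbf{w})\cap\operatorname{supp}(\mathbf{x}_0)=\emptyset$ by (i) --- contradicting that $\operatorname{supp}(\mathbf{w})$ contains an index of $\operatorname{supp}(\mathbf{x}_0)$.

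For (v), write $\operatorname{supp}(\mathbf{x})=\{\ell,i,i'\}$, $\operatorname{supp}(\mathbf{y})=\{\ell,j,j'\}$. Any vertex $w$ with $\{i,j\}\subseteq\operatorname{supp}(\mathbf{w})$ has its third index outside $\{\ell,i',j'\}$ (otherwise it would share two indices with $x$ or with $y$, impossible by (i)), hence is automatically adjacent to both $x$ and $y$ with $(\mathbf{w},\mathbf{e}_i+\mathbf{e}_j)=2$; so (v) reduces to showing $V_i\cap V_j\neq\emptyset$. To find such a vertex: first $|V_j|\geq 4$, because applying (ii) to $y$ and some $z'\in\Gamma_2(y)$ with $\operatorname{supp}(\mathbf{z}')\cap\operatorname{supp}(\mathbf{y})=\emptyset$ (Lemma~\ref{lem: non neighbor 0 common}) produces, via the three pairs $(j,p)$ with $p\in\operatorname{supp}(\mathbf{z}')$, three distinct common neighbors in $V_j$ besides $y$. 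Since the two non-$j$ indices of distinct members of $V_j$ form pairwise disjoint pairs (by (i)), at most three members of $V_j$ meet $\{\ell,i,i'\}$, so some $z\in V_j$ has $\operatorname{supp}(\mathbf{z})\cap\operatorname{supp}(\mathbf{x})=\emptyset$; by (iv), $d(x,z)=2$, and (ii) applied to $(x,z)$ with the indices $i\in\operatorname{supp}(\mathbf{x})$, $j\in\operatorname{supp}(\mathbf{z})$ delivers a common neighbor of $x$ and $z$ in $V_i\cap V_j$.

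The real obstacle I expect is (v): parts (i)--(iv) are short once the support lemmas and the counting bijection are available, whereas (v) needs the extra structural inputs --- recognizing it as the assertion $V_i\cap V_j\neq\emptyset$, proving a clique through $j$ (or through $i$) has at least four vertices, and picking a member of that clique whose support avoids $\operatorname{supp}(\mathbf{x})$ so that (ii) can be applied. A recurring technical point to watch is that every invocation of (ii) is for a pair of vertices at distance exactly $2$, which is exactly where (iv) is needed.
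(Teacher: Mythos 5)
Your proof is correct and follows essentially the same route as the paper: (i) from Lemmas \ref{lem: general common support}, \ref{lem: non neighbor 2 common} and \ref{lem: non neighbor 0 common}; (ii)--(iii) from the injective map of common neighbors into the nine index pairs; (iv) by applying (ii) along a geodesic; and (v) by two applications of (ii) after locating a neighbor of $y$ through $j$ whose support misses $\operatorname{supp}(\mathbf{x})$. Your sign normalization and the $|V_j|\geq 4$ pigeonhole in (v) are minor (and clean) repackagings of the paper's argument, which instead takes two of the three candidates $w_1,w_2$ and notes that one must miss $i_2$.
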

\begin{proof}
We may let $\mathbf{x}=a_1\mathbf{e}_{i_1}+a_2\mathbf{e}_{i_2}+a_3\mathbf{e}_{i_3}$ and  $\mathbf{y}=b_1\mathbf{e}_{j_1}+b_2\mathbf{e}_{j_2}+b_3\mathbf{e}_{j_3}$, where $a_1,a_2,a_3,b_1,b_2,b_3\in\{1,-1\}$.

{\rm (i)} If $x\sim y$, it follows from Lemma \ref{lem: neighbor 1 common} {\rm (i)}. If $x\not\sim y$, it follows from Lemma \ref{lem: general common support}, Lemma \ref{lem: non neighbor 2 common} and Lemma \ref{lem: non neighbor 0 common} as $c\geq9$.

{\rm (ii)}--{\rm (iii)} We have, by {\rm (i)}, that $i_1,i_2,i_3,j_1,j_2,j_3$ are pairwise different, and for each vertex $z\in\Gamma_1(x)\cap\Gamma_1(y)$,  $|\operatorname{supp}(\mathbf{z})\cap\operatorname{supp}(\mathbf{x})|=|\operatorname{supp}(\mathbf{z})\cap\operatorname{supp}(\mathbf{x})|=1$. Thus, there exist $i'\in\{i_1,i_2,i_3\}$ and $j'\in\{j_1,j_2,j_3\}$ such that $(\mathbf{z}, a_{x'}\mathbf{e}_{i'}+b_{j'}\mathbf{e}_{j'})=2$. By Lemma \ref{lem: neighbor 1 common} {\rm (ii)}, we have
\[
\begin{split}
	9\leq c&=|\Gamma_1(x)\cap\Gamma_1(y)|\\
	&=|\{z\mid (\mathbf{z}, a_{i'}\mathbf{e}_{i'}+b_{j'}\mathbf{e}_{j'})=2,i'\in\{i_1,i_2,i_3\},j'\in\{j_1,j_2,j_3\} \}|\\
	&\leq|\{\{i_1,j_1\},\{i_1,j_2\},\{i_1,j_3\},\{i_2,j_1\},\{i_2,j_2\},\{i_2,j_3\},\{i_3,j_1\},\{i_3,j_2\},\{i_3,j_3\}\}|\\
	&\leq  9.
\end{split}
\] 
Hence {\rm (ii)} and {\rm (iii)}  hold.

{\rm (iv)} Let $x'$ and $y'$ be two vertices of distance $2$. To prove {\rm (iv)}, it is sufficient to prove that $d(z,x')\leq 2$ for any vertex $z\in\Gamma_1(y')$. Without loss of generality, we may assume $z\not\sim x'$.
As $z\sim y'$, we have $|\operatorname{supp}(\mathbf{z})\cap\operatorname{supp}(\mathbf{y'})|=1$ by {\rm (i)}. From  {\rm (ii)}, we find that there must be a vertex $w'\in \Gamma_1(x')\cap\Gamma_1(y')$, such that $\operatorname{supp}(\mathbf{w'})\cap\operatorname{supp}(\mathbf{y})=\operatorname{supp}(\mathbf{z})\cap\operatorname{supp}(\mathbf{y})$. Thus, $|\operatorname{supp}(\mathbf{w'})\cap\operatorname{supp}(\mathbf{z})|=1$ and $w'\sim z$, which implies $d(z,x')=2$. 

{\rm (v)} We may assume $i=i_1$ and $j=j_1$. Let $z$ be a vertex in $\Gamma_2(y)$. Then $|\operatorname{supp}(\mathbf{z})\cap\operatorname{supp}(\mathbf{y})|=0$ follows. 
By {\rm (ii)}, we can find at least two vertices $w_1$ and $w_2$ in $\Gamma_1(y)\cap \Gamma_1(z)$ such that $(\mathbf{w_1},b_{j_1}\mathbf{e}_{j_1})=(\mathbf{w_2},b_{j_1}\mathbf{e}_{j_1})=1$ and  $(\mathbf{w_1},\mathbf{e}_{j_2})=(\mathbf{w_2},\mathbf{e}_{j_2})=(\mathbf{w_1},\mathbf{e}_{\ell})=(\mathbf{w_2},\mathbf{e}_{\ell})=0$. If either $(\mathbf{w_1},a_{i_1}\mathbf{e}_{i_1})$ or $(\mathbf{w_2},a_{i_1}\mathbf{e}_{i_1})$ equals $1$, then the existence of $w$ is shown. Otherwise $(\mathbf{w_1},\mathbf{e}_{i_1})=(\mathbf{w_2},\mathbf{e}_{i_1})=0$. Note that now we have either $(\mathbf{w_1},\mathbf{e}_{i_2})=0$ or $(\mathbf{w_2},\mathbf{e}_{i_2})=0$, as $|\operatorname{supp}(\mathbf{w_1})\cap\operatorname{supp}(\mathbf{w_2})|\leq1$ by {\rm (i)}. This means that at least one of $w_1$ and $w_2$ is not adjacent to $x$. Let us assume $w_1\not\sim x$. Since $d(x,w_1)=2$ by {\rm (iv)}, Using {\rm (ii)} again, we obtain the existence of $w$ in the set $\Gamma_1(x)\cap\Gamma_1(w_1)$. 
\end{proof}

Now we give the proof of Theorem \ref{c equals 9}.

\begin{proof}[\rm\textbf{Proof of Theorem \ref{c equals 9}}]
Since $S=\emptyset$ by Claim \ref{cla: no same support}, we may let $\mathcal{P}=\bigcup_{x\in V(\Gamma)}\operatorname{supp}(\mathbf{x})$ and $\mathcal{B}=\{\operatorname{supp}(\mathbf{x})\mid x\in V(\Gamma)\}$ a family of  $3$-element subsets of $\mathcal{P}$. Assume $i\in \operatorname{supp}(\mathbf{x})$ and $j\in \operatorname{supp}(\mathbf{y})$. If $x=y$, then $\{i,j\}$ is contained in the block $\operatorname{supp}(\mathbf{x})$. If $x\neq y$, Lemma \ref{lem: conclusion} {\rm (iv)},~{\rm (ii)} and {\rm (v)} guarantee that there exists $w\in\Gamma_1(x)\cap\Gamma_1(y)$ such that $\{i,j\}$ is contained in the block $\operatorname{supp}(\mathbf{w})$. The uniqueness of the block which contains $\{i,j\}$ is from Lemma \ref{lem: conclusion} {\rm (i)}.
Hence we conclude that $(\mathcal{P},\mathcal{B})$ is a Steiner triple system. Lemma \ref{lem: conclusion} {\rm (i)} says that the block graph of this Steiner triple system is exactly the graph $\Gamma$. Therefore, the theorem holds. 
\end{proof}

For the rest of this section, we will prove Claim \ref{cla: no same support}.


\begin{lem}\label{lem: 3 common no neighbor 0 common}
Assume $S\neq\emptyset$ and $c\geq7$. For any $x\in S$ and $y\in \Gamma_2(x)$,  $|\operatorname{supp}(\mathbf{x})\cap\operatorname{supp}(\mathbf{y})|=0$ holds.
\end{lem}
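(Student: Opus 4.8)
The plan is to argue by contradiction. Since $y\in\Gamma_2(x)$ we have $x\not\sim y$, so Lemma~\ref{lem: general common support} gives $|\operatorname{supp}(\mathbf{x})\cap\operatorname{supp}(\mathbf{y})|\in\{0,2\}$; suppose it equals $2$. Let $x'$ be the mate of $x$ (Lemma~\ref{lem: 3 common neighbor}): $\operatorname{supp}(\mathbf{x'})=\operatorname{supp}(\mathbf{x})$, $x'\sim x$, and $\mathbf{x'}$ differs from $\mathbf{x}$ only in the sign of one $\mathbf{e}_i$, $i\in\operatorname{supp}(\mathbf{x})$. The first step is to pin down the common support: if the (size-$2$) set $\operatorname{supp}(\mathbf{x})\cap\operatorname{supp}(\mathbf{y})$ contained $i$, then a coordinate-wise computation of $(\mathbf{x'},\mathbf{y})$ gives $(\mathbf{x'},\mathbf{y})=\pm2$, contradicting \eqref{eq inner product}; hence $\operatorname{supp}(\mathbf{x})\cap\operatorname{supp}(\mathbf{y})$ consists of the two coordinates on which $\mathbf{x}$ and $\mathbf{x'}$ agree, and the same computation yields $(\mathbf{x'},\mathbf{y})=0$, i.e.\ $x'\not\sim y$. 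After relabelling and sign changes we may take $\mathbf{x}=\mathbf{e}_1+\mathbf{e}_2+\mathbf{e}_3$, $\mathbf{x'}=\mathbf{e}_1+\mathbf{e}_2-\mathbf{e}_3$, $\mathbf{y}=\mathbf{e}_1-\mathbf{e}_2+\mathbf{e}_4$.

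Next I analyse the neighbourhood of $x$. Put $W=\Gamma_1(x)\cap\Gamma_1(y)$, so $|W|=c\ge 7$ because $d(x,y)=2$. For $w\in W$: since $x'\not\sim y$ we have $w\neq x'$, so $w$ is a neighbour of $x$ other than its mate; and $w$ is not the mate of $y$, since its support would then be $\{1,2,4\}$, which meets $\operatorname{supp}(\mathbf{x})$ in two points, impossible by Lemma~\ref{lem: neighbor 1 common}(i). Applying Lemma~\ref{lem: neighbor 1 common}(i) to $x$ and to $y$ shows $\operatorname{supp}(\mathbf{w})$ meets $\{1,2,3\}$ and $\{1,2,4\}$ each in a single point; going through the cases and discarding $\{3,4\}\subset\operatorname{supp}(\mathbf{w})$ (there $(\mathbf{x'},\mathbf{w})=-1$, impossible) leaves $\operatorname{supp}(\mathbf{w})\cap\{1,2,3,4\}=\{1\}$ with $\sigma_w(1)=1$ and the other two coordinates $\ge 5$, and conversely every such vertex lies in $W$. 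A parallel, shorter enumeration of a vertex $r\in\Gamma_1(x)\setminus(W\cup\{x'\})$ --- a neighbour of $x$ that is neither its mate nor adjacent to $y$ --- forces $\mathbf{r}=\mathbf{e}_1-\mathbf{e}_4+\sigma_r(u)\mathbf{e}_u$ or $\mathbf{r}=\mathbf{e}_2+\mathbf{e}_4+\sigma_r(u)\mathbf{e}_u$ with $u\ge 5$, and Lemma~\ref{lem: neighbor 1 common}(ii) gives at most two of each kind, so $|\Gamma_1(x)\setminus(W\cup\{x'\})|\le 4$. Two further facts: each $w\in W$ is adjacent to $x'$ as well (only coordinate $1$ is common to $\mathbf{w}$ and $\mathbf{x'}$); and for each coordinate $u\ge 5$ at most one vertex of $W$ contains $u$ with sign $+1$ and at most one with sign $-1$, for otherwise two vertices of $W$ would have inner product $\ge 2$. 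In particular $\Gamma_1(x')\cap\Gamma_1(y)=W$.

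Now the decisive step. Since $c\le k-2$, the set $\Gamma_1(y)\setminus W$ has at least two vertices, so (the mate of $y$ being unique) there is $z\in\Gamma_1(y)\setminus W$ that is not the mate of $y$; as $z\sim y$ and $W=\Gamma_1(x)\cap\Gamma_1(y)=\Gamma_1(x')\cap\Gamma_1(y)$, we get $z\not\sim x$ and $z\not\sim x'$. The enumeration as for $r$ then gives $\operatorname{supp}(\mathbf{z})\cap\{1,2,3,4\}=\{4\}$ with $\sigma_z(4)=1$ and the remaining two coordinates $\ge 5$. Since $z$ has only these two coordinates $\ge 5$ and (as coordinates $1$ and $4$ are irrelevant to $(\mathbf{z},\mathbf{w})$) $z$ is adjacent to $w\in W$ precisely when $z$ and $w$ share exactly one of those coordinates with matching sign, the last observation above yields $|\Gamma_1(z)\cap W|\le 2$. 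On the other hand, if $d(x,z)=2$ then sesqui-regularity gives $|\Gamma_1(z)\cap\Gamma_1(x)|=c$; writing $\Gamma_1(z)\cap\Gamma_1(x)$ as the disjoint union $\bigl(\Gamma_1(z)\cap W\bigr)\cup\bigl(\Gamma_1(z)\cap(\Gamma_1(x)\setminus(W\cup\{x'\}))\bigr)$ (using $z\not\sim x'$) gives $c\le 2+4=6$, contradicting $c\ge 7$.

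The remaining possibility is $d(x,z)=3$, which forces $D(\Gamma)\ge 3$ and, since every $w\in W$ is a neighbour of $x$, forces $z\not\sim w$ for all $w\in W$. I expect to dispose of this either by choosing instead a neighbour of $y$ outside $W$ that does realize distance $2$ to $x$ (replacing $z$ in the count above), or --- in the spirit of the proof of Lemma~\ref{c is k-1}(ii) --- by exhibiting an induced subgraph, or equitable-type quotient, of $\Gamma$ with smallest eigenvalue below $-3$, built from the rigid near-bipartite structure between the neighbourhood of $x$ and the layer containing $z$. The parts that need genuine care are the coordinate enumerations of the second and third paragraphs and this diameter-$\ge 3$ reduction; the rest is bookkeeping with \eqref{eq inner product} and Lemma~\ref{lem: neighbor 1 common}.
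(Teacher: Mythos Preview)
Your initial setup matches the paper's: the WLOG normalisation $\mathbf{x}=\mathbf{e}_1+\mathbf{e}_2+\mathbf{e}_3$, $\mathbf{x'}=\mathbf{e}_1+\mathbf{e}_2-\mathbf{e}_3$, $\mathbf{y}=\mathbf{e}_1-\mathbf{e}_2+\mathbf{e}_4$, and the description of $W=\Gamma_1(x)\cap\Gamma_1(y)$ and of $\Gamma_1(x)\setminus(W\cup\{x'\})$ as vertices of the two types $\mathbf{e}_1-\mathbf{e}_4+\cdots$ or $\mathbf{e}_2+\mathbf{e}_4+\cdots$, are exactly what the paper obtains. Your count $|\Gamma_1(z)\cap W|\le 2$ is correct and elegant, and the resulting bound $c\le 6$ when $d(x,z)=2$ is a nice shortcut.

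The genuine gap is the case $d(x,z)\ge 3$, which you explicitly leave open. Neither of your two suggested fixes is sufficient as stated. For the first, nothing so far rules out that \emph{every} non-mate $z\in\Gamma_1(y)\setminus W$ has $d(x,z)\ge 3$: such a $z$ only needs to avoid adjacency with the at most four vertices in $\Gamma_1(x)\setminus(W\cup\{x'\})$ and the at most two relevant vertices of $W$, which does not force a contradiction by a simple count. For the second, a quotient/interlacing argument would need a partition, and none is apparent from the data you have assembled.

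The paper does not avoid this difficulty; it meets an analogous $d\ge 3$ subcase and handles it by a different case split. Rather than merely bounding $|\Gamma_1(x)\setminus(W\cup\{x'\})|\le 4$, the paper separates the two types. If a vertex $w_1$ of type $\mathbf{e}_1-\mathbf{e}_4+\cdots$ exists, then the extra constraint $(\mathbf{u},\mathbf{w_1})\in\{0,1\}$ forces \emph{every} $u\in\Gamma_1(y)\setminus\Gamma_1(x)$ to satisfy $(\mathbf{u},\mathbf{e}_4+\sigma_{w_1}(\ell_1)\mathbf{e}_{\ell_1})=2$; Lemma~\ref{lem: neighbor 1 common}(ii) then gives $k-c=2$ and pins down $\Gamma_1(y)\setminus\Gamma_1(x)=\{u_1,u_2\}$ as a pair of mates, after which one shows each $v\in W$ is non-adjacent to $u_1$ at distance $2$, giving $c\le k-3$, a contradiction. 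If no such $w_1$ exists, then $k-c\le 3$, and one takes $w_2$ of type $\mathbf{e}_2+\mathbf{e}_4+\cdots$; if $d(y,w_2)=2$ an inclusion--exclusion on $\Gamma_1(y)$ gives $k\ge 2c-2$, contradicting $c\ge 7$; if $d(y,w_2)\ge 3$, the constraint $z\not\sim w_2$ again pins down $\Gamma_1(y)\setminus\Gamma_1(x)$ as a pair of mates $z_1,z_2$, and one reruns the first argument with $(z_1,z_2,w_2,y)$ playing the role of $(x,x',y,w_1)$. The key idea you are missing is that the existence of one specific vertex in $\Gamma_1(x)\setminus(W\cup\{x'\})$ rigidifies $\Gamma_1(y)\setminus W$ completely, producing new vertices in $S$ on which to recurse.
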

\begin{proof}
Suppose $|\operatorname{supp}(\mathbf{y})\cap\operatorname{supp}(\mathbf{x})|=2$. As $y\not\sim x$, $(\mathbf{x},\mathbf{y})=0$ follows. Without loss of generality, we may assume 
\[\mathbf{x}=\mathbf{e}_1+\mathbf{e}_2+\mathbf{e}_3,~\mathbf{y}=\mathbf{e}_1-\mathbf{e}_2+\mathbf{e}_4.\]
Let $x'$ be the mate of $x$. Considering $(\mathbf{x'},\mathbf{y})=1$ or $0$, we obtain
\[\mathbf{x'}=\mathbf{e}_1+\mathbf{e}_2-\mathbf{e}_3,\]
and thus $x'\not\sim y$. By Lemma \ref{lem: neighbor 1 common} {\rm (i)}, for any vertex $w\in \Gamma_1(x)-\{x'\}$,  \[|\operatorname{supp}(\mathbf{w})\cap\operatorname{supp}(\mathbf{x})|=1 \text{ and }(\mathbf{w},\mathbf{e}_{\operatorname{supp}(\mathbf{w})\cap\operatorname{supp}(\mathbf{x})})=1\]
hold. This means that for any vertex $w\in\Gamma_1(x)-\{x'\}$, $\operatorname{supp}(\mathbf{w})\cap\operatorname{supp}(\mathbf{x})\neq\{3\}$ as $(\mathbf{w},\mathbf{x'})\in\{0,1\}$, and furthermore, if $w\sim y$, then 
\begin{equation}\label{eq:1 3 common no neighbor 0 common}
(\mathbf{w},\mathbf{e}_1)=1 \text{ and } (\mathbf{w},\mathbf{e}_i)=0 \text{ for }i=2,3,4;	
\end{equation}
and
if $w\not\sim y$, then 
\begin{equation}\label{eq:2 3 common no neighbor 0 common}
(\mathbf{w},\mathbf{e}_1-\mathbf{e}_4)=2 \text{ or }(\mathbf{w},\mathbf{e}_2+\mathbf{e}_4)=2.	 
\end{equation}

Assume that there exists a vertex $w_1\in \Gamma_1(x)-\{x'\}-\Gamma_1(y)$ such that $(\mathbf{w_1},\mathbf{e}_1-\mathbf{e}_4)=2$. Let $\operatorname{supp}(\mathbf{w_1})=\{1,4,\ell_1\}$ for some $5\leq\ell_1\leq m$.  We find that in this case, for any vertex $u\in \Gamma_1(y)-\Gamma_1(x)$,
\[\operatorname{supp}(\mathbf{u})\cap\operatorname{supp}(\mathbf{x})=\emptyset \text { and }(\mathbf{u},\mathbf{e}_4+\sigma_{w_1}(\ell_1)\mathbf{e}_{\ell_1})=2.\]
Note that $2\leq k-c=|\Gamma_1(y)-\Gamma_1(x)|=|\{u\in V(\Gamma)\mid\operatorname{supp}(\mathbf{u})\cap\operatorname{supp}(\mathbf{x})=\emptyset \text { and } (\mathbf{u},\mathbf{e}_4+\sigma_{w_1}(\ell_1)\mathbf{e}_{\ell_1})=2\}|\leq2$ by Lemma \ref{lem: neighbor 1 common} {\rm (ii)}. Hence, $k-c=2$ and there exist two vertices $u_1,u_2\in \Gamma_1(y)$ such that
\[\mathbf{u_1}=\mathbf{e}_4+\sigma_{w_1}(\ell_1)\mathbf{e}_{\ell_1}+\mathbf{e}_{\ell_2},~\mathbf{u_2}=\mathbf{e}_4+\sigma_{w_1}(\ell_1)\mathbf{e}_{\ell_1}-\mathbf{e}_{\ell_2}\]
for some $\ell_2\not\in\{1,2,3,4,\ell_1\}$. From \eqref{eq:1 3 common no neighbor 0 common}, we easily obtain that for every $v\in \Gamma_1(x)\cap\Gamma_1(y)$, $\mathbf{v}$ satisfies $(\mathbf{v},\mathbf{e}_1)=1$, $(\mathbf{v},\mathbf{e}_i)=0$ for $i=2,3,4$. Since each of $(\mathbf{v},\mathbf{u_1})$, $(\mathbf{v},\mathbf{u_2})$ and $(\mathbf{v},\mathbf{w_1})$ equals $1$ or $0$, we have $(\mathbf{v},\mathbf{e}_{\ell_2})=0$ and $(\mathbf{v},\mathbf{e}_{\ell_1})=0$, and thus $v\not\sim u_1$. Notice that $d(v,u_1)=2$, as $v\sim y$ and $u_1\sim y$, but $c=|\Gamma_1(v)\cap\Gamma_1(u_1)|\leq |\Gamma_1(v)-\{x,x',w_1\}|=k-3$. This gives a contradiction. Therefore, for any $w\in\Gamma_1(x)-\{x'\}$ and $w\not\sim y$, $(\mathbf{w},\mathbf{e}_2+\mathbf{e}_4)=2$ holds by \eqref{eq:2 3 common no neighbor 0 common}, and 
\begin{equation}\label{eq: k-c geq 3}
k-c=|\Gamma_1(x)-\Gamma_1(y)|\leq |\{x'\}\cup\{w\in V(\Gamma)\mid (\mathbf{w},\mathbf{e}_2+\mathbf{e}_4)=2\}|\leq 1+2=3.
	\end{equation}  

Let $w_2\in \Gamma_1(x)-\Gamma_1(y)$ be a vertex such that $(\mathbf{w_2},\mathbf{e}_2+\mathbf{e}_4)=2$. Let $\operatorname{supp}(\mathbf{w_2})=\{2,4,\ell_3\}$ for some $5\leq\ell_3\leq m$. If $d(y,w_2)=2$, then we can easily obtain the following:
\begin{align*}
	k=|\Gamma_1(y)|&\geq|(\Gamma_1(y)\cap\Gamma_1(x))\cup(\Gamma_1(y)\cap\Gamma_1(w_2))|\\
	&=|\Gamma_1(y)\cap\Gamma_1(x)|+|\Gamma_1(y)\cap\Gamma_1(w_2)|-|\Gamma_1(y)\cap\Gamma_1(x)\cap\Gamma_1(w_2)| \\
	&\geq c+c-|\{w\in V(\Gamma)\mid (\mathbf{w},\mathbf{e}_1+\sigma_{w_2}(\ell_3)\mathbf{e}_{\ell_3})=2\}|\\
	&\geq 2c-2\\
	&\geq 2(k-3)-2.
\end{align*}
This means $k\leq 8$ and thus $c\leq 6$ as $k-c\geq2$, which contradicts the given condition $c\geq7$. So now we may assume $d(y,w_2)\geq3$ and look at the vertices in $\Gamma_1(y)-\Gamma_1(x)$. Note that for any vertex $z\in \Gamma_1(y)-\Gamma_1(x)$, 
 \[\operatorname{supp}(\mathbf{z})\cap\operatorname{supp}(\mathbf{x})=\emptyset \text { and }(\mathbf{z},\mathbf{e}_4-\sigma_{w_2}(\ell_3)\mathbf{e}_{\ell_3})=2,\]
 as $z\not\sim w_2$.
 Following a similar method as we used to obtain $u_1$ and $u_2$ in the previous paragraph, we can also find vertices $z_1$ and $z_2$ in $\Gamma_1(y)$ such that
 \[\mathbf{z_1}=\mathbf{e}_4-\sigma_{w_2}(\ell_3)\mathbf{e}_{\ell_3}+\mathbf{e}_{\ell_4},~\mathbf{z_2}=\mathbf{e}_4-\sigma_{w_2}(\ell_3)\mathbf{e}_{\ell_3}-\mathbf{e}_{\ell_4}\]
for some $\ell_4\not\in\{1,2,3,4,\ell_3\}$. Until now we can proceed a similar argument for $z_1,z_2, w_2, y$ as we did for $x,x',y,w_1$ in the previous paragraph (it is not hard to check $d(z_1,w_2)=2$), and obtain a contradiction. Therefore, our lemma holds.
 \end{proof} 

\begin{lem}\label{lem: 3 common no neighbor no mate}
Assume $S\neq\emptyset$ and $c\geq9$.  For any $x\in S$ and $y\in \Gamma_2(x)$, $y\not\in S$ holds.
\end{lem}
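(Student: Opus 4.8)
The plan is to argue by contradiction: assume $x\in S$ and $y\in\Gamma_2(x)$ with $y\in S$ as well, and deduce $c\le 8$, contradicting $c\ge 9$. First I would choose coordinates. By Lemma \ref{lem: 3 common no neighbor 0 common} (applicable since $c\ge 9\ge 7$), $\operatorname{supp}(\mathbf{x})\cap\operatorname{supp}(\mathbf{y})=\emptyset$, so after relabelling indices and flipping signs of the $\mathbf{e}_i$ I may take $\mathbf{x}=\mathbf{e}_1+\mathbf{e}_2+\mathbf{e}_3$ and $\mathbf{y}=\mathbf{e}_4+\mathbf{e}_5+\mathbf{e}_6$. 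Let $x'$, $y'$ be the mates of $x$, $y$; by Lemma \ref{lem: 3 common neighbor} they exist, are unique, and satisfy $x'\sim x$, $y'\sim y$. Since $(\mathbf{x'},\mathbf{x})=1$ and $\operatorname{supp}(\mathbf{x'})=\operatorname{supp}(\mathbf{x})$, exactly one sign of $\mathbf{x'}$ differs from $\mathbf{x}$, so I may further relabel within $\{1,2,3\}$ to get $\mathbf{x'}=\mathbf{e}_1+\mathbf{e}_2-\mathbf{e}_3$, and likewise $\mathbf{y'}=\mathbf{e}_4+\mathbf{e}_5-\mathbf{e}_6$.

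Next I would observe that $x'\not\sim y$ and $y'\not\sim x$: the supports $\{1,2,3\}$ and $\{4,5,6\}$ are disjoint, so those inner products are $0$, not $1$. Hence neither mate belongs to $\Gamma_1(x)\cap\Gamma_1(y)$, and so every $w\in\Gamma_1(x)\cap\Gamma_1(y)$ is a neighbour of $x$ that is not its mate and a neighbour of $y$ that is not its mate. Lemma \ref{lem: neighbor 1 common} (i) then gives $|\operatorname{supp}(\mathbf{w})\cap\{1,2,3\}|=1$ with $\mathbf{w}$-value $+1$ on the shared index, and $|\operatorname{supp}(\mathbf{w})\cap\{4,5,6\}|=1$ with value $+1$ there.

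The crucial step -- and the one that makes the lemma work -- is to invoke the mates a second time to kill one option on each side. If the index shared by $\operatorname{supp}(\mathbf{w})$ and $\{1,2,3\}$ were $3$, then $(\mathbf{w},\mathbf{x'})=-1$, which is impossible for distinct vertices; hence that shared index lies in $\{1,2\}$, and symmetrically the index shared with $\{4,5,6\}$ lies in $\{4,5\}$. Thus every $w\in\Gamma_1(x)\cap\Gamma_1(y)$ satisfies $(\mathbf{w},\mathbf{e}_i+\mathbf{e}_j)=2$ for some $\{i,j\}$ among the four pairs $\{1,4\},\{1,5\},\{2,4\},\{2,5\}$, and Lemma \ref{lem: neighbor 1 common} (ii) -- applied with $w$ itself as the reference vertex, since $\{i,j\}\subset\operatorname{supp}(\mathbf{w})$ and $\sigma_w(i)=\sigma_w(j)=+1$ -- bounds the number of vertices realising any fixed such pair by $2$. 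Summing, $c=|\Gamma_1(x)\cap\Gamma_1(y)|\le 8<9$, the desired contradiction.

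The only part needing care is the bookkeeping for the final count: one must check that each common neighbour really is charged to a pair $\{i,j\}$ with $\sigma_w(i)=\sigma_w(j)=+1$ (so that it contributes to the set counted in Lemma \ref{lem: neighbor 1 common} (ii)), and that the reference vertex chosen there is legitimate; everything else reduces to the short coordinate arguments above. Conceptually, this is precisely the mechanism by which the presence of mates sharpens the generic bound $c\le 9$ of Lemma \ref{lem: conclusion} (iii) down to $c\le 8$.
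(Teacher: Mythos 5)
Your proposal is correct and follows essentially the same route as the paper: reduce to $\mathbf{x}=\mathbf{e}_1+\mathbf{e}_2+\mathbf{e}_3$, $\mathbf{y}=\mathbf{e}_4+\mathbf{e}_5+\mathbf{e}_6$ with mates $\mathbf{x'}=\mathbf{e}_1+\mathbf{e}_2-\mathbf{e}_3$, $\mathbf{y'}=\mathbf{e}_4+\mathbf{e}_5-\mathbf{e}_6$, use the mates to restrict each common neighbour to one of the four pairs $\{i,j\}$ with $i\in\{1,2\}$, $j\in\{4,5\}$, and apply Lemma \ref{lem: neighbor 1 common} (ii) to get $c\le 8$. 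The paper's version is just more terse; your explicit justification that the shared index cannot be $3$ (resp.\ $6$) is exactly the step the paper leaves implicit.
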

\begin{proof}
Suppose $y\in S$. By Lemma \ref{lem: 3 common no neighbor 0 common}, we have $|\operatorname{supp}(\mathbf{y})\cap\operatorname{supp}(\mathbf{x})|=0$. Thus we may assume  
$\mathbf{x}=\mathbf{e}_1+\mathbf{e}_2+\mathbf{e}_3$ and $\mathbf{y}=\mathbf{e}_4+\mathbf{e}_5+\mathbf{e}_6.$ Let $x'$ and $y'$ be the mates of $x$ and $y$ respectively. Without loss of generality, we may also assume $\mathbf{x'}=\mathbf{e}_1+\mathbf{e}_2-\mathbf{e}_3$ and $\mathbf{y'}=\mathbf{e}_4+\mathbf{e}_5-\mathbf{e}_6.$ Now it is easy to see 
\begin{align*}
	9\leq c&=|\Gamma_1(x)\cap\Gamma_1(y)|\\
	&\leq |\{w\in V(\Gamma)\mid (\mathbf{w},\mathbf{e}_i+\mathbf{e}_j)=2,i\in\{1,2\},j\in\{4,5\}\}\\
	&\leq 2|\{\{1,4\},\{1,5\},\{2,4\},\{2,5\}\}|=8
\end{align*}
by Lemma \ref{lem: neighbor 1 common} {\rm (ii)}.
This gives a contradition. Hence $y\not\in S$.
\end{proof}

\begin{lem}\label{lem:prepartion supp}
	Assume $S\neq\emptyset$ and $c\geq 9$. Let $x\in S$ and $y\in V(\Gamma)$ be two vertices such that $d(x,y)=2$, and let $x'$ be the mate of $x$. If $\operatorname{supp}(\mathbf{x})\in\{i_1,i_2,i_3\}$ and $\sigma_{x}(i_1)\sigma_{x'}(i_1)=\sigma_{x}(i_2)\sigma_{x'}(i_2)=1$,
	then the following hold:
	\begin{enumerate}
		\item $y\in V(\Gamma)-S$ and $\operatorname{supp}(\mathbf{x})\cap\operatorname{supp}(\mathbf{y})=\emptyset$.
		\item $|\Gamma_1(x)\cap \Gamma_1(y)\cap S|=6$.
		\item $\bigcap_{w\in \Gamma_1(x)\cap \Gamma_1(y)\cap S}\operatorname{supp}(\mathbf{w})\in\{i_1,i_2\}$ and $|\bigcap_{w\in \Gamma_1(x)\cap \Gamma_1(y)\cap S}\operatorname{supp}(\mathbf{w})|=1$.
		\item For $i\in\bigcap_{w\in \Gamma_1(x)\cap \Gamma_1(y)\cap S}\operatorname{supp}(\mathbf{w})$ and $j\in \operatorname{supp}(\mathbf{y})$, there exist  $z_1\in\Gamma_1(x)\cap \Gamma_1(y)\cap S$ and its mate $z'_1$ such that $(\mathbf{z_1},\sigma_{x}(i)\mathbf{e}_{i}+\sigma_{y}(j)\mathbf{e}_{j})=(\mathbf{z'_1},\sigma_{x}(i)\mathbf{e}_{i}+\sigma_{y}(j)\mathbf{e}_{j})=2$ hold.
		\item For $i\in \{i_1,i_2\}-\bigcap_{w\in \Gamma_1(x)\cap \Gamma_1(y)\cap S}\operatorname{supp}(\mathbf{w})$ and $j\in \operatorname{supp}(\mathbf{y})$, there exists $z_2\in\Gamma_1(x)\cap \Gamma_1(y)-S$ such that $(\mathbf{z_2},\sigma_{x}(i)\mathbf{e}_{i}+\sigma_{y}(j)\mathbf{e}_{j})=2$ holds.
		\item $c=9$.
	\end{enumerate}
	\end{lem}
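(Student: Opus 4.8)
The plan is to normalize the integral vectors and then read off the local combinatorics around the pair $(x,y)$; only part (iii)/(vi) really needs work.

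After permuting coordinates and flipping signs we may assume $i_1=1$, $i_2=2$, $i_3=3$ and $\mathbf{x}=\mathbf{e}_1+\mathbf{e}_2+\mathbf{e}_3$. Since $x\sim x'$ gives $(\mathbf{x},\mathbf{x'})=1$ while $\sigma_x(1)\sigma_{x'}(1)=\sigma_x(2)\sigma_{x'}(2)=1$ by hypothesis, the third sign satisfies $\sigma_x(3)\sigma_{x'}(3)=-1$, i.e.\ $\mathbf{x'}=\mathbf{e}_1+\mathbf{e}_2-\mathbf{e}_3$. For part~(i): as $d(x,y)=2$ we have $y\in\Gamma_2(x)$, so Lemma~\ref{lem: 3 common no neighbor 0 common} (note $c\ge 9\ge 7$) forces $\operatorname{supp}(\mathbf{x})\cap\operatorname{supp}(\mathbf{y})=\emptyset$, and Lemma~\ref{lem: 3 common no neighbor no mate} (again $c\ge 9$) forces $y\notin S$. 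We may then also assume $\mathbf{y}=\mathbf{e}_4+\mathbf{e}_5+\mathbf{e}_6$.

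For the remaining parts, set $T:=\Gamma_1(x)\cap\Gamma_1(y)$, so $|T|=c$. Since $\operatorname{supp}(\mathbf{x'})=\operatorname{supp}(\mathbf{x})$ is disjoint from $\operatorname{supp}(\mathbf{y})$ we have $x'\not\sim y$, hence $x'\notin T$. For $w\in T$, Lemma~\ref{lem: neighbor 1 common}(i) (as $w\ne x'$) together with $y\notin S$ gives $|\operatorname{supp}(\mathbf{w})\cap\{1,2,3\}|=|\operatorname{supp}(\mathbf{w})\cap\{4,5,6\}|=1$, and the coordinate shared with $\mathbf{x}$ cannot be $3$ (else $\sigma_w(3)=\sigma_x(3)=1$ would give $(\mathbf{w},\mathbf{x'})=-1$, impossible by \eqref{eq inner product}); thus $\operatorname{supp}(\mathbf{w})=\{i,j,\ell\}$ with $i\in\{1,2\}$, $j\in\{4,5,6\}$, $\ell\notin\{1,\dots,6\}$ and $\sigma_w(i)=\sigma_w(j)=1$. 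Call $(i,j)$ the \emph{type} of $w$; there are six possible types. If in addition $w\in S$, its mate cannot differ from $\mathbf{w}$ in coordinate $i$ or in $j$ (that would force inner product $-1$ with $\mathbf{x}$ resp.\ with $\mathbf{y}$), so the mate equals $\mathbf{e}_i+\mathbf{e}_j-\sigma_w(\ell)\mathbf{e}_\ell$, which again lies in $T$ and has type $(i,j)$. Combining this with Lemma~\ref{lem: neighbor 1 common}(ii) applied to $w$, every occurring type contains either exactly one vertex of $T$, lying outside $S$, or exactly two, which are mates and lie in $S$. Writing $p$ and $q$ for the numbers of types of the two sorts, we get $c=p+2q$, $p+q\le 6$ and $|\Gamma_1(x)\cap\Gamma_1(y)\cap S|=2q$.

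It remains to pin down $p$ and $q$, which is the heart of the argument. First, two distinct ``double'' types have distinct third coordinates: a common third coordinate would, after choosing suitable representatives of the two mate-pairs, produce two vertices of inner product $2$, contradicting \eqref{eq inner product}. Hence if two double types differ in \emph{both} coordinates, their representatives have disjoint supports, so they are non-adjacent, yet both lie in $\Gamma_1(y)$, hence are at distance $2$ and both in $S$ --- contradicting Lemma~\ref{lem: 3 common no neighbor no mate}. Since $c\ge 9$ and $p+q\le 6$ force $q\ge 3$, there are at least three double types, and a short case check (two of them cannot share only a second coordinate, for then a third double type would be forced --- it cannot coincide with either or share only a second coordinate with either --- to share its first coordinate with each of the other two, whose first coordinates differ) shows that all of them have a common first coordinate $i^{*}\in\{1,2\}$; as only three types lie over a fixed first coordinate, $q\le 3$. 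Therefore $q=3$, the double types are precisely $(i^{*},4),(i^{*},5),(i^{*},6)$, and $p+6=c\ge 9$ with $p\le 3$ give $p=3$ and $c=9$. This is part~(vi); it also yields $|\Gamma_1(x)\cap\Gamma_1(y)\cap S|=2q=6$ (part~(ii)) and $\bigcap_{w\in\Gamma_1(x)\cap\Gamma_1(y)\cap S}\operatorname{supp}(\mathbf{w})=\{i^{*}\}$, a one-element subset of $\{i_1,i_2\}$ (part~(iii)). Finally, for part~(iv) the mate-pair $z_1,z_1'$ realizing the double type $(i^{*},j)$ lies in $\Gamma_1(x)\cap\Gamma_1(y)\cap S$ and satisfies $(\mathbf{z_1},\sigma_x(i^{*})\mathbf{e}_{i^{*}}+\sigma_y(j)\mathbf{e}_j)=\sigma_{z_1}(i^{*})\sigma_x(i^{*})+\sigma_{z_1}(j)\sigma_y(j)=2$, and likewise for $z_1'$; for part~(v), when $i\in\{i_1,i_2\}\setminus\{i^{*}\}$ the type $(i,j)$ is simple and its unique vertex $z_2\in(\Gamma_1(x)\cap\Gamma_1(y))\setminus S$ satisfies the same identity. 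I expect the main obstacle to be exactly the step that excludes double types lying over two different first coordinates (so that $q\le 3$); the type classification and the inner-product computations in (iv)--(v) are routine once that rigidity is in hand.
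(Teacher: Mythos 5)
Your proof is correct and takes essentially the same route as the paper: it classifies the common neighbors of $x$ and $y$ by the pair of coordinates shared with $\operatorname{supp}(\mathbf{x})$ and $\operatorname{supp}(\mathbf{y})$ (your types $(i,j)$ are the paper's sets $T_{1,i_1},T_{1,i_2},T_2$), counts via Lemma \ref{lem: neighbor 1 common}~(ii), and rules out double types over both first coordinates by the same disjoint-support, distance-$2$, Lemma \ref{lem: 3 common no neighbor no mate} contradiction. One trivial slip: when two double types differ in \emph{both} coordinates, a shared third coordinate produces an inner product of $-1$ rather than $2$, but either value violates \eqref{eq inner product}, so your conclusion is unaffected.
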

\begin{proof}
{\rm (i)} This follows from Lemma \ref{lem: 3 common no neighbor no mate} and Lemma \ref{lem: 3 common no neighbor 0 common} immediately.

{\rm (ii)}--{\rm (vi)} We may assume $\mathbf{x}=\mathbf{e}_{i_1}+\mathbf{e}_{i_2}+\mathbf{e}_{i_3},~\mathbf{x'}=\mathbf{e}_{i_1}+\mathbf{e}_{i_2}-\mathbf{e}_{i_3},~\mathbf{y}=\mathbf{e}_{j_1}+\mathbf{e}_{j_2}+\mathbf{e}_{j_3}$, and
pay our attention to the set $\Gamma_1(x)\cap\Gamma_1(y)$. For any $w\in \Gamma_1(x)\cap\Gamma_1(y)$,
\begin{equation} \label{eq: the common neighbor}
	|\operatorname{supp}(\mathbf{w})\cap\operatorname{supp}(\mathbf{x})|=1,~ |\operatorname{supp}(\mathbf{w})\cap\operatorname{supp}(\mathbf{y})|=1
	\end{equation}
hold by Lemma \ref{lem: neighbor 1 common} {\rm (i)} and {\rm (i)}, and hence there exist $\ell_1\in\{i_1,i_2\}$ and  $\ell_2\in\{j_1,j_2,j_3\}$ such that $(\mathbf{w},\mathbf{e}_{\ell_1}+\mathbf{e}_{\ell_2})=2$. Let 
\begin{align*}
T_{1,i_1}&=\{\{i_1,\ell\}\mid (\mathbf{w},\mathbf{e}_{i_1}+\mathbf{e}_{\ell})=2,w\in \Gamma_1(x)\cap \Gamma_1(y)\cap S,\ell\in\{j_1,j_2,j_3\}\},\\
T_{1,i_2}&=\{\{i_2,\ell\}\mid (\mathbf{w},\mathbf{e}_{i_2}+\mathbf{e}_{\ell})=2,w\in \Gamma_1(x)\cap \Gamma_1(y)\cap S,\ell\in\{j_1,j_2,j_3\}\}, \text{ and }\\
T_2&=\{\{\ell_1,\ell_2\}\mid (\mathbf{w},\mathbf{e}_{\ell_1}+\mathbf{e}_{\ell_2})=2,w\in \Gamma_1(x)\cap \Gamma_1(y)-S,\ell_1\in\{i_1,i_2\},\ell_2\in\{j_1,j_2,j_3\}\}.	 
\end{align*}
Then $(T_{1,i_1}\cup T_{1,i_2})\cap T_2=\emptyset$ by Lemma \ref{lem: neighbor 1 common} {\rm (ii)}. Note that for each $w\in T_{1,i_2}\cup T_{1,i_2}$, its mate $w'$ is also in  $T_{1,i_2}\cup T_{1,i_2}$ by \eqref{eq: the common neighbor}. Hence we have 
\begin{equation}\label{eq: size T1 and T2}
	\begin{split}
	|T_{1,i_1}|+|T_{1,i_2}|+|T_2|\leq &~|\{\{\ell_1,\ell_2\}\mid \ell_1\in\{i_1,i_2\},\ell_2\in\{j_1,j_2,j_3\}\}|=6,\\
	2|T_{1,i_1}|+2|T_{1,i_2}|+|T_2|=&~|\Gamma_1(x)\cap\Gamma_1(y)|=c\geq 9
	\end{split}
\end{equation}
by using Lemma \ref{lem: neighbor 1 common} {\rm (ii)} again. It follows immediately that 
\begin{equation}\label{eq: size T1}
	|T_{1,i_1}|+|T_{1,i_2}|\geq3.
	\end{equation} If both $T_{1,i_1}$ and $T_{1,i_2}$ are not empty, then \eqref{eq: size T1} implies that there exist $\ell_3,\ell_4\in\{j_1,j_2,j_3\}$ such that $\ell_3\neq \ell_4$ and $\{i_1,\ell_3\}\in T_{1,i_1}$, $\{i_2,\ell_4\}\in T_{1,i_2}$.  Let $w_{i_1,\ell_3}$ and  $w_{i_2,\ell_4}$ be the vertices in $\Gamma_1(x)\cap \Gamma_1(y)\cap S$ such that $(\mathbf{w_{i_1,\ell_3}},\mathbf{e}_{i_1}+\mathbf{e}_{\ell_3})=2$ and $(\mathbf{w_{i_2,\ell_4}},\mathbf{e}_{i_2}+\mathbf{e}_{\ell_4})=2$. After looking at the integral vectors corresponding to the mates of $w_{i_1,\ell_3}$ and  $w_{i_2,\ell_4}$, we can easily obtain $|\operatorname{supp}(\mathbf{w_{i_1,\ell_3}})\cap\operatorname{supp}(\mathbf{w_{i_2,\ell_4}})|=0$ and thus $w_{i_1,\ell_3}\not\sim w_{i_2,\ell_4}$. Since both  $w_{i_1,\ell_3}$ and  $w_{i_2,\ell_4}$ are adjacent to $x$, $d(w_{i_1,\ell_3},w_{i_2,\ell_4})=2$ follows. This contradicts Lemma \ref{lem: 3 common no neighbor no mate}, as both $w_{i_1,\ell_3}$ and $w_{i_2,\ell_4}$ are in $S$ and have distance $2$. So now we may assume $T_{1,i_2}=\emptyset$.  Then \eqref{eq: size T1} forces $|T_{1,i_1}|=3$, and {\rm (iii)} and {\rm (iv)} hold.
Since $|\Gamma_1(x)\cap \Gamma_1(y)\cap S|=2|T_{1,i_1}|$, {\rm (ii)} also follows. By \eqref{eq: size T1 and T2} we obtain $|T_2|=3$ and $T_2=\{\{i_2,j_1\},\{i_2,j_2\},\{i_2,j_3\}\}$ immediately . Now it is obvious to see both {\rm (v)} and {\rm (vi)} hold.
\end{proof}

\begin{lem}\label{lem:clique}  
	Assume $S\neq\emptyset$ and $c\geq 9$. Then the subgraph of $\Gamma$ induced on $S$ is a clique.
	\end{lem}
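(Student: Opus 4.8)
The plan is a proof by contradiction driven by a descent on graph distances. Suppose the subgraph of $\Gamma$ induced on $S$ is not a clique. Then there are two distinct vertices $x,y\in S$ with $x\not\sim y$. Since $\Gamma$ is connected, $d(x,y)$ is a finite integer, and it is at least $2$; moreover Lemma \ref{lem: 3 common no neighbor no mate} forbids $d(x,y)=2$ (that would put $y\in\Gamma_2(x)\cap S$). Hence $d(x,y)\geq 3$. Among all pairs of non-adjacent vertices of $S$, choose one, say $(x,y)$, for which $d:=d(x,y)$ is smallest; so $d\geq 3$.

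Next I would locate a pair of non-adjacent vertices of $S$ whose distance is strictly smaller than $d$. Fix a shortest path $x=u_0\sim u_1\sim\cdots\sim u_d=y$ in $\Gamma$ and put $v:=u_2$, so $d(x,v)=2$. Let $x'$ be the mate of $x$ (it exists as $x\in S$), and recall $x\sim x'$ by Lemma \ref{lem: 3 common neighbor}; since $\operatorname{supp}(\mathbf{x'})=\operatorname{supp}(\mathbf{x})$, the three numbers $\sigma_x(i)\sigma_{x'}(i)$ for $i\in\operatorname{supp}(\mathbf{x})$ each equal $\pm1$ and sum to $(\mathbf{x},\mathbf{x'})=1$, so exactly two of them equal $1$. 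Labelling $\operatorname{supp}(\mathbf{x})=\{i_1,i_2,i_3\}$ so that $\sigma_x(i_1)\sigma_{x'}(i_1)=\sigma_x(i_2)\sigma_{x'}(i_2)=1$, Lemma \ref{lem:prepartion supp} applies to $x$ and $v$; in particular its part {\rm (ii)} gives $|\Gamma_1(x)\cap\Gamma_1(v)\cap S|=6$, so we may pick a vertex $w\in S$ with $w\sim x$ and $w\sim v$.

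Then I would bound $d(w,y)$ from both sides. On the one hand $d(w,y)\leq d(w,v)+d(v,y)=1+(d-2)=d-1$. On the other hand $w\sim x$ gives $d(w,y)\geq d(x,y)-1=d-1$. Therefore $d(w,y)=d-1\geq 2$, which means that $w$ and $y$ are distinct and non-adjacent, both lie in $S$, and $d(w,y)<d$. This contradicts the minimality of $d$ when $d\geq 4$, and when $d=3$ it contradicts Lemma \ref{lem: 3 common no neighbor no mate} directly (as then $w\in\Gamma_2(y)\cap S$ with $y\in S$). Either way we reach a contradiction, so $S$ induces a clique.

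I do not anticipate a real obstacle here: the only point requiring a moment's care is verifying that the hypothesis of Lemma \ref{lem:prepartion supp} on the sign pattern $\sigma_x(i)\sigma_{x'}(i)$ can be met after relabelling the support of $x$, and this follows at once from $x\sim x'$. The remaining steps are elementary manipulations of distances in $\Gamma$ together with the already established Lemmas \ref{lem: 3 common no neighbor no mate} and \ref{lem:prepartion supp}.
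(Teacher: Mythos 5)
Your proof is correct and follows essentially the same argument as the paper: a descent on the minimal distance between non-adjacent vertices of $S$, using Lemma \ref{lem:prepartion supp} {\rm (ii)} to produce a closer pair in $S$ and Lemma \ref{lem: 3 common no neighbor no mate} to rule out distance $2$. The only cosmetic difference is that you apply Lemma \ref{lem:prepartion supp} at the $x$-end of the shortest path rather than at the $y$-end, and you correctly note the harmless relabelling of $\operatorname{supp}(\mathbf{x})$ needed to meet its sign hypothesis.
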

\begin{proof}
	Suppose not. Then let $d=\min\{d(x,y)\mid x,y\in S\text{ and }d(x,y)\geq2\}$. Assume $x_0,x_d\in S$ and $d(x_0,x_d)=d$. There is a path $x_0\sim x_1\sim\cdots\sim x_d$ in $\Gamma$ with $x_i\in\Gamma_i(x_0)$ for $i=1,\ldots,d$. If $d\geq3$, then by Lemma \ref{lem:prepartion supp} {\rm (ii)}, there exists $z\in \Gamma_1(x_{d-2})\cap\Gamma_1(x_{d})$ such that $z\in S$. Note that $d(x_0,z)=d-1\geq2$. It follows that $d-1=d(x_0,z)\geq \min\{d(x,y)\mid x,y\in S\text{ and }d(x,y)\geq2\}=d$, which gives a contradiction. Thus $d=2$. But Lemma \ref{lem: 3 common no neighbor no mate} tells that this is impossible. The proof is completed.
	\end{proof}	

\begin{lem}
Assume $S\neq\emptyset$ and $c\geq9$. Then $|\bigcap_{x\in S}\operatorname{supp}(\mathbf{x})|=1$ holds. In particular, there exists $a\in\{1,-1\}$, such that for every $y\in S$, $(\mathbf{y},\mathbf{e}_{\bigcap_{x\in S}\operatorname{supp}(\mathbf{x})})=a$ holds.
\end{lem}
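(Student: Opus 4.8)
The plan is to leverage the clique structure of $S$ established in Lemma \ref{lem:clique} together with the detailed support analysis in Lemma \ref{lem:prepartion supp}. First I would fix a vertex $x\in S$ with mate $x'$; since $x\sim x'$ and they are mates, $\operatorname{supp}(\mathbf{x})=\operatorname{supp}(\mathbf{x'})=\{i_1,i_2,i_3\}$ and, as in the proof of Lemma \ref{lem: 3 common neighbor}, exactly two of the three coordinates agree in sign and one disagrees; say $\sigma_x(i_1)\sigma_{x'}(i_1)=\sigma_x(i_2)\sigma_{x'}(i_2)=1$ and $\sigma_x(i_3)\sigma_{x'}(i_3)=-1$. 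I would show that $\bigcap_{x\in S}\operatorname{supp}(\mathbf{x})$ is exactly the coordinate among $\{i_1,i_2\}$ singled out by Lemma \ref{lem:prepartion supp} (iii), namely $\ell:=\bigcap_{w\in\Gamma_1(x)\cap\Gamma_1(y)\cap S}\operatorname{supp}(\mathbf{w})$ for a suitable $y$ at distance $2$ from $x$; such a $y$ exists because $\Gamma$ cannot be a clique (its smallest eigenvalue is $<-2$), so $\Gamma_2(x)\neq\emptyset$.

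The key steps in order: (1) Since $S$ is a clique, any two $u,v\in S$ are adjacent and are not mates of each other unless $v=u'$; by Lemma \ref{lem: general common support} and the not-mates case we get $|\operatorname{supp}(\mathbf{u})\cap\operatorname{supp}(\mathbf{v})|=1$ whenever $v\notin\{u,u'\}$, while $\operatorname{supp}(\mathbf{u})=\operatorname{supp}(\mathbf{u'})$. (2) Using Lemma \ref{lem:prepartion supp} (iv), the six vertices of $\Gamma_1(x)\cap\Gamma_1(y)\cap S$ all share the coordinate $\ell$ and moreover come in mate-pairs, so $\ell\in\operatorname{supp}(\mathbf{w})$ for each such $w$ and $\ell\in\{i_1,i_2\}\subset\operatorname{supp}(\mathbf{x})$. (3) I would then argue that $\ell\in\operatorname{supp}(\mathbf{z})$ for \emph{every} $z\in S$: for $z\in\Gamma_1(x)\cap S$ that lies in the set of six, this is immediate; for a general $z\in S$, use that $z$ is adjacent to $x$ (clique) and to at least one of the six vertices $w$ with $\operatorname{supp}(\mathbf{w})\cap\operatorname{supp}(\mathbf{x})=\{\ell\}$, forcing via the $|\operatorname{supp}(\mathbf{z})\cap\operatorname{supp}(\mathbf{w})|=1$ constraint and $|\operatorname{supp}(\mathbf{z})\cap\operatorname{supp}(\mathbf{x})|=1$ that the common coordinate is $\ell$ — here one plays off the mate $z'$ against $z$ to rule out the alternative coordinates. (4) This gives $|\bigcap_{x\in S}\operatorname{supp}(\mathbf{x})|\geq1$; equality follows since $x$ and $x'$ already have intersection of size $3$ but a third vertex of $S$ (which exists if $|S|\geq3$; if $|S|=2$ the statement is about $\operatorname{supp}(\mathbf{x})\cap\operatorname{supp}(\mathbf{x'})$ and one argues separately, or notes the intersection over all of $S$ cannot exceed what two non-mate members share, which is $1$) shares only one coordinate with $\mathbf{x}$. (5) Finally, for the sign statement: write $\ell=\bigcap_{x\in S}\operatorname{supp}(\mathbf{x})$ and set $a=\sigma_x(\ell)$; for any other $y\in S$, adjacency $x\sim y$ with $\operatorname{supp}(\mathbf{x})\cap\operatorname{supp}(\mathbf{y})=\{\ell\}$ and Lemma \ref{lem: neighbor 1 common} (i) give $\sigma_y(\ell)=\sigma_x(\ell)=a$.

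The main obstacle I anticipate is step (3): pinning down that the \emph{same} coordinate $\ell$ works for every member of $S$, not just those in $\Gamma_1(x)\cap\Gamma_1(y)\cap S$. The difficulty is that a priori a vertex $z\in S$ could meet $\operatorname{supp}(\mathbf{x})=\{i_1,i_2,i_3\}$ in $\{i_1\}$, $\{i_2\}$, or $\{i_3\}$, and one must exclude the wrong choices. I expect this is handled by the mate trick: if $\operatorname{supp}(\mathbf{z})\cap\operatorname{supp}(\mathbf{x})=\{i_j\}$ with $\sigma_x(i_j)\sigma_{x'}(i_j)=-1$ (i.e. $j=3$), then $(\mathbf{z},\mathbf{x})$ and $(\mathbf{z},\mathbf{x'})$ would have opposite behavior on coordinate $i_3$, contradicting that both inner products lie in $\{0,1\}$ — exactly the computation already used in the proof of Lemma \ref{lem: 3 common no neighbor 0 common}. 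This leaves $\{i_1\}$ or $\{i_2\}$, and then one uses a vertex $y$ at distance $2$ and Lemma \ref{lem:prepartion supp} applied with roles arranged so that the six common neighbors force the intersection to be the distinguished one; a short argument comparing $z$, its mate $z'$, and two of the six witnesses should eliminate the remaining ambiguity. Everything else is bookkeeping with \eqref{eq inner product} and Lemmas \ref{lem: general common support}--\ref{lem: neighbor 1 common}.
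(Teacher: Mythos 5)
Your proposal is correct in substance, but it takes a genuinely different route from the paper's proof. The paper argues entirely inside $S$: it uses Lemma \ref{lem:prepartion supp}~(ii) only to get $|S|\geq 8$, normalizes two mate-pairs as $\mathbf{x_1}=\mathbf{e}_1+\mathbf{e}_2+\mathbf{e}_3$, $\mathbf{x'_1}=\mathbf{e}_1+\mathbf{e}_2-\mathbf{e}_3$, $\mathbf{x_2}=\mathbf{e}_1+\mathbf{e}_4+\mathbf{e}_5$, $\mathbf{x'_2}=\mathbf{e}_1+\mathbf{e}_4-\mathbf{e}_5$, supposes there is no common coordinate so that some third mate-pair must be $\mathbf{e}_2+\mathbf{e}_4\pm\mathbf{e}_\ell$, and then checks that a fourth mate-pair (which exists since $|S|\geq 8$) admits no vector compatible with all six vertices already placed. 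You instead route through the distance-$2$ vertex $y$ and parts (iii)--(iv) of Lemma \ref{lem:prepartion supp}, identifying the distinguished coordinate $\ell$ via the six common neighbors and then propagating it to all of $S$. The step you flag as the main obstacle does close: if $z\in S$ avoided $\ell$, then since $z$ is adjacent (by Lemma \ref{lem:clique}) to both members of each of the three mate-pairs with supports $\{\ell,j_t,s_t\}$, $j_t\in\operatorname{supp}(\mathbf{y})$ and the mates disagreeing only at $s_t$, the unique common coordinate of $\mathbf{z}$ with each pair must be an agreeing one, forcing $j_1,j_2,j_3\in\operatorname{supp}(\mathbf{z})$, i.e.\ $\operatorname{supp}(\mathbf{z})=\operatorname{supp}(\mathbf{y})$ --- impossible because $\operatorname{supp}(\mathbf{z})\cap\operatorname{supp}(\mathbf{x})\neq\emptyset$ while $\operatorname{supp}(\mathbf{x})\cap\operatorname{supp}(\mathbf{y})=\emptyset$. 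Your side worry about $|S|=2$ is moot: Lemma \ref{lem:prepartion supp}~(ii) already gives $|S|\geq 8$, which is exactly the paper's opening move. The two arguments are of comparable length; the paper's is a self-contained finite check on eight vertices of $S$, whereas yours is more constructive in that it exhibits the common coordinate directly, at the cost of leaning on the finer parts (iii)--(v) of Lemma \ref{lem:prepartion supp}.
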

\begin{proof}
It is sufficient to prove $|\bigcap_{x\in S}\operatorname{supp}(\mathbf{x})|=1$.
From Lemma \ref{lem:prepartion supp} {\rm (ii)}, we have $|S|\geq 8$. Let $x_1,x_2,x_1',x_2'$ be four vertices in $S$, where $x'_i$ is the mate of $x_i$ for $i=1,2$. Note that by Lemma \ref{lem:clique}, the subgraph of $\Gamma$ induced on $S$ is a clique. Hence, we may assume  
\[\mathbf{x_1}=\mathbf{e}_{1}+\mathbf{e}_{2}+\mathbf{e}_{3},~\mathbf{x'_1}=\mathbf{e}_{1}+\mathbf{e}_{2}-\mathbf{e}_{3},~\mathbf{x_2}=\mathbf{e}_{1}+\mathbf{e}_{4}+\mathbf{e}_{5},~\mathbf{x'_2}=\mathbf{e}_{1}+\mathbf{e}_{4}-\mathbf{e}_{5}\] 
by Lemma \ref{lem: neighbor 1 common} {\rm (i)}. 

Suppose that this lemma does not hold. Then we may assume that there exists a vertex $x_3\in S$ such that $\operatorname{supp}(\mathbf{x_3})\cap\operatorname{supp}(\mathbf{x_1})\neq \operatorname{supp}(\mathbf{x_3})\cap\operatorname{supp}(\mathbf{x_2})$. Let $x'_3$ be the mate of $x_3$. It follows that $(\mathbf{x_3},\mathbf{e}_{2}+\mathbf{e}_{4})=(\mathbf{x'_3},\mathbf{e}_{2}+\mathbf{e}_{4})=2$. We may assume
\[\mathbf{x_3}=\mathbf{e}_{2}+\mathbf{e}_{4}+\mathbf{e}_{\ell},~\mathbf{x'_3}=\mathbf{e}_{2}+\mathbf{e}_{4}-\mathbf{e}_{\ell}\text{ for some }6\leq\ell\leq m.\]
As $|S|\geq8$, there must be a vertex $x_4$ in $S-\{x_1,x'_1,x_2,x'_2,x_3,x'_3\}$. But it is not possible to define $\mathbf{x_4}$ such that both $(\mathbf{x_4},\mathbf{x_i})=1$ and $(\mathbf{x_4},\mathbf{x'_i})=1$ hold for every $i\in\{1,2,3\}$. Therefore, our lemma is proved.
\end{proof}

From now on, if $S\neq\emptyset$, then we may always assume $S=\{x_1,x'_1,x_2,x'_2,\ldots,x_s,x'_s\}$, where $s\geq 4$ and for each $i=1,2,\ldots,s$, the vertices $x_i$ and $x'_i$ are mates, and 
\[\mathbf{x_i}=\mathbf{e}_{i}+\mathbf{e}_{s+i}+\mathbf{e}_m,~\mathbf{x'_i}=\mathbf{e}_{i}-\mathbf{e}_{s+i}+\mathbf{e}_m.\]

\begin{lem}\label{lem: distance between S and non S} 
	Assume $S\neq\emptyset$ and $c\geq9$. For any $x\in S$ and $y\in V(\Gamma)-S$, $d(x,y)\leq2$ holds.
\end{lem}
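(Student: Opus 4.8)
The plan is to argue by contradiction: suppose there exist $x\in S$ and $y\in V(\Gamma)-S$ with $d(x,y)\geq 3$. Using the standing normalization, write $\mathbf{x}=\mathbf{e}_i+\mathbf{e}_{s+i}+\mathbf{e}_m$ with mate $\mathbf{x'}=\mathbf{e}_i-\mathbf{e}_{s+i}+\mathbf{e}_m$, so that $\sigma_x(i)\sigma_{x'}(i)=\sigma_x(m)\sigma_{x'}(m)=1$. The key structural tool is Lemma \ref{lem:prepartion supp}: whenever a vertex $w$ has $d(x,w)=2$, the coordinates $i$ and $m$ (the two on which $\mathbf{x}$ and $\mathbf{x'}$ agree) play the role of the distinguished pair, and all but a controlled portion of $\Gamma_1(x)\cap\Gamma_1(w)$ lies in $S$, with common support concentrated on one of $\{i,m\}$. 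So the first step is to take $x_1\in\Gamma_1(x)$ and a vertex $x_2$ at distance $2$ from $x$ along a shortest $x$–$y$ path (which exists since $d(x,y)\geq 3\geq 2$), and apply Lemma \ref{lem:prepartion supp} to the pair $(x,x_2)$; this forces $\operatorname{supp}(\mathbf{x})\cap\operatorname{supp}(\mathbf{x_2})=\emptyset$, $c=9$, and pins down $\bigcap_{w\in\Gamma_1(x)\cap\Gamma_1(x_2)\cap S}\operatorname{supp}(\mathbf{w})$ to be $\{i\}$ or $\{m\}$.

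Next I would trace the distinguished coordinate along the path toward $y$. At distance $2$ we know (by Lemma \ref{lem:prepartion supp}(iii)) that six of the nine common neighbors of $x$ and $x_2$ lie in $S$ and all share a single coordinate $\ell\in\{i,m\}$, and that these six split into three mate-pairs whose supports are $\{\ell,j\}\cup\{*\}$ for $j$ ranging over $\operatorname{supp}(\mathbf{x_2})$. The idea is to pick one such $S$-vertex $w\in\Gamma_1(x)\cap\Gamma_1(x_2)\cap S$ and observe that $w$ is itself in $S$, so one can re-run the argument with $w$ in the role of $x$: since $w$ and its mate agree precisely on $\{\ell,?\}$ we again get the distinguished-pair hypothesis of Lemma \ref{lem:prepartion supp}. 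Iterating, one shows that $d(x,y)=2$ after all — more precisely, that every vertex at distance $2$ from $x$ has $\Gamma_1$-neighborhood forcing $\Gamma_3(x)=\emptyset$, because a putative vertex $z\in\Gamma_3(x)$ adjacent to some $x_2\in\Gamma_2(x)$ would have $d(x_2,z)=1$ while $d(w,z)$, for $w$ the $S$-vertex just produced, is at most $2$; chasing supports then produces a vertex in $\Gamma_1(x)\cap\Gamma_1(z)$, contradicting $d(x,z)=3$.

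Concretely, here is the mechanism I expect to use for the final contradiction. Fix a shortest path $x=z_0\sim z_1\sim z_2\sim z_3$ with $z_3$ on the way to $y$ (so $d(x,z_3)=3$). Apply Lemma \ref{lem:prepartion supp} to $(x,z_2)$: we get $\Gamma_1(x)\cap\Gamma_1(z_2)\cap S$ of size $6$, with common support a single coordinate $\ell\in\{i,m\}$, and in particular an $S$-vertex $w$ with $w\sim z_2$. Now $d(x,z_3)=3$ forces $w\not\sim z_3$ (else $d(x,z_3)\leq 2$), so $d(w,z_3)=2$ since $w\sim z_2\sim z_3$. Since $w\in S$, apply Lemma \ref{lem:prepartion supp} to the pair $(w,z_3)$: this gives $\operatorname{supp}(\mathbf{w})\cap\operatorname{supp}(\mathbf{z_3})=\emptyset$ and, crucially, six $S$-vertices in $\Gamma_1(w)\cap\Gamma_1(z_3)$ sharing a coordinate $\ell'\in\operatorname{supp}(\mathbf{w})$ that is fixed by $w$ and its mate. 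One of these, call it $v$, satisfies $v\sim w$ and $v\sim z_3$; and because $w\in S$ with distinguished coordinate, $v\in S$ too. Now $v$ and $x$ are both in $S$, so by Lemma \ref{lem:clique} the induced subgraph on $S$ is a clique, whence $v\sim x$. But then $x\sim v\sim z_3$ gives $d(x,z_3)\leq 2$, contradicting $d(x,z_3)=3$. This completes the proof.

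\textbf{Main obstacle.} The delicate point is step two: verifying that the $S$-vertex $w$ produced from $\Gamma_1(x)\cap\Gamma_1(z_2)$ genuinely satisfies the distinguished-pair hypothesis of Lemma \ref{lem:prepartion supp} when we pivot to $(w,z_3)$ — i.e.\ that $w$'s mate $w'$ agrees with $w$ on exactly the coordinate $\ell$ plus one more, and that $\ell$ is still available as the "agreement" coordinate rather than being consumed by the common support with $z_3$. This requires carefully tracking, via the explicit forms $\mathbf{x_i}=\mathbf{e}_i+\mathbf{e}_{s+i}+\mathbf{e}_m$ and the list \eqref{eq: 3 common neighbor}, which coordinate is shared by all of $S$ (the global $\mathbf{e}_m$) versus which is local; once one knows $\bigcap_{x\in S}\operatorname{supp}(\mathbf{x})=\{m\}$ and that every $S$-vertex has $(\mathbf{y},\mathbf{e}_m)=a$ for a fixed sign, the bookkeeping becomes routine, but getting the signs and the roles of $i$ versus $m$ consistent across the two applications of Lemma \ref{lem:prepartion supp} is where the care is needed.
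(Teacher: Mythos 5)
Your proof is correct and is essentially the paper's own argument: the paper likewise takes a shortest path $x\sim y_1\sim y_2\sim y_3$, uses Lemma \ref{lem:prepartion supp} {\rm (ii)} to replace $y_1$ by an $S$-vertex in $\Gamma_1(x)\cap\Gamma_1(y_2)\cap S$, applies the same lemma again to that vertex and $y_3$ to produce a second $S$-vertex adjacent to $y_3$, and then invokes Lemma \ref{lem:clique} to force $d(x,y_3)\leq 2$. The ``main obstacle'' you flag is not actually one: for any $w\in S$ with mate $w'$, $(\mathbf{w},\mathbf{w'})=1$ with equal supports of size $3$ forces agreement on exactly two coordinates, so the distinguished-pair hypothesis of Lemma \ref{lem:prepartion supp} is automatic for every $S$-vertex.
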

\begin{proof}
	Suppose $d=d(x,y)\geq3$. Then we can find a path $x\sim y_1\sim y_2\sim\cdots\sim y_d=y$ with $y_i\in \Gamma_i(x)$ for $i=1,\ldots,d$.  Since $y_i\not\sim x$ for $i\geq2$, we have $y_i\not\in S$ by Lemma \ref{lem:clique}. We may assume $y_1\in S$, otherwise we can find a vertex $y'_1\in\Gamma_1(x)\cap\Gamma_1(y_2)\cap S$ by Lemma \ref{lem:prepartion supp} {\rm (ii)}  and replace $y_1$ by $y'_1$. Now we look at the vertices $y_1$ and $y_3$. Using Lemma \ref{lem:prepartion supp} {\rm (ii)} again, we can find a vertex $y'_2$ in  $\Gamma_1(y_1)\cap\Gamma_1(y_3)\cap S$. Notice that $y'_2$ is also adjacent to $x,$ as $y'_2\in S$. This implies $d(y_3,x)\leq2$, which contradicts $y_3\in \Gamma_3(x)$. Therefore, our lemma holds.
\end{proof}

\begin{lem}\label{lem: distance 2}
	If $S\neq\emptyset$ and $c\geq9$, then for any $y\in V(\Gamma)-S$, there exists $x\in S$ such that $d(x,y)=2$ holds. 
\end{lem}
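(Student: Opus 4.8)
The plan is to recast the statement purely in terms of supports. For $y\in V(\Gamma)-S$, Lemma~\ref{lem: distance between S and non S} shows that ``$d(x,y)\neq 2$ for every $x\in S$'' is equivalent to ``$y$ is adjacent to every vertex of $S$''. Feeding $\mathbf{x_i}=\mathbf{e}_i+\mathbf{e}_{s+i}+\mathbf{e}_m$ and $\mathbf{x_i'}=\mathbf{e}_i-\mathbf{e}_{s+i}+\mathbf{e}_m$ into $(\mathbf{y},\mathbf{x_i})=(\mathbf{y},\mathbf{x_i'})=1$ and adding/subtracting gives $\sigma_y(s+i)=0$ and $\sigma_y(i)+\sigma_y(m)=1$ for all $i=1,\dots,s$; since $|\operatorname{supp}(\mathbf{y})|=3<s$ cannot contain $\{1,\dots,s\}$, this forces $\sigma_y(m)=1$ and $\sigma_y(i)=0$ for $i\le s$, hence $\operatorname{supp}(\mathbf{y})=\{m,p,q\}$ with $p,q>2s$. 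Conversely, any vertex outside $S$ whose support contains $m$ lies at distance $\le 2$ from each $x\in S$ by Lemma~\ref{lem: distance between S and non S} and cannot be in $\Gamma_2(x)$ by Lemma~\ref{lem: 3 common no neighbor 0 common}, so it is adjacent to all of $S$. Thus the lemma is equivalent to the nonexistence of a ``bad'' vertex $v\notin S$ with $\operatorname{supp}(\mathbf{v})=\{m,p,q\}$, $p,q>2s$, adjacent to every vertex of $S$; I would argue by contradiction against such a $v$.

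The key structural input, which I would establish first, is the following: for every $x\in S$ and every $z\in\Gamma_2(x)$, the support $\operatorname{supp}(\mathbf{z})$ consists of three distinct ``first'' indices lying in $\{1,\dots,s\}$, none of which is the first index of $\operatorname{supp}(\mathbf{x})$; in particular $\operatorname{supp}(\mathbf{z})\subseteq\{1,\dots,s\}$. Here Lemma~\ref{lem:prepartion supp} applies because $x_i$ satisfies its hypothesis with mate-stable indices $i$ and $m$. Every member of $\Gamma_1(x)\cap\Gamma_1(z)\cap S$ contains $m$ in its support, so the common support in Lemma~\ref{lem:prepartion supp}(iii) must be $\{m\}$; then part (iv) supplies, for each $j\in\operatorname{supp}(\mathbf{z})$, a pair of mates $z_1,z_1'\in S$ with $(\mathbf{z_1},\mathbf{e}_m+\sigma_z(j)\mathbf{e}_j)=(\mathbf{z_1'},\mathbf{e}_m+\sigma_z(j)\mathbf{e}_j)=2$, so $z_1,z_1'$ agree at both $m$ and $j$. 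Since a mate pair in $S$ agrees exactly at its first index and at $m$, this forces $j$ to be the first index of that pair; and $j$ is not the first index of $x$'s own pair because $\operatorname{supp}(\mathbf{z})\cap\operatorname{supp}(\mathbf{x})=\emptyset$ by Lemma~\ref{lem: 3 common no neighbor 0 common}. Distinct $j$'s then give distinct pairs.

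Having this, I would first note $\Gamma_{\ge 3}(x)=\emptyset$ for $x\in S$ (from Lemma~\ref{lem:clique} and Lemma~\ref{lem: distance between S and non S}), so $\Gamma_2(x)\neq\emptyset$ since $\Gamma$ is not complete, and $c=9$ by Lemma~\ref{lem:prepartion supp}(vi). If $s=4$, the structural fact forces $\operatorname{supp}(\mathbf{z})=\{2,3,4\}$ for every $z\in\Gamma_2(x_1)$; two such $z$ would be mates and hence in $S$, impossible, so $\Gamma_2(x_1)=\{z_0\}$ is a single vertex, whence $\Gamma_1(x_1)=V(\Gamma)-\{x_1,z_0\}\supseteq\Gamma_1(z_0)$ and $c=|\Gamma_1(x_1)\cap\Gamma_1(z_0)|=|\Gamma_1(z_0)|=k$, contradicting $c\le k-2$. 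So $s\ge 5$. Now let $v$ be bad. For $w\sim v$, Lemma~\ref{lem: neighbor 1 common}(i) gives $|\operatorname{supp}(\mathbf{w})\cap\{m,p,q\}|=1$; if the shared index were $p$ or $q$, then $w\notin S$ with $m\notin\operatorname{supp}(\mathbf{w})$, so $w$ is at distance $2$ from some vertex of $S$ and $\operatorname{supp}(\mathbf{w})\subseteq\{1,\dots,s\}$, contradicting $p$ or $q\in\operatorname{supp}(\mathbf{w})$. Hence every neighbor of $v$ has $m$ in its support. Pick $z\in\Gamma_2(v)$. If $m\notin\operatorname{supp}(\mathbf{z})$, then $\operatorname{supp}(\mathbf{z})\subseteq\{1,\dots,s\}$, and each of the $9$ common neighbors of $v$ and $z$ has $m$ in its support and meets $\operatorname{supp}(\mathbf{z})$ in one index, which is possible only for vertices of $S$ whose first index lies in $\operatorname{supp}(\mathbf{z})$ — at most $6$ of them, a contradiction. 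So $m\in\operatorname{supp}(\mathbf{z})$, hence $z$ is itself bad and adjacent to all of $S$; but then all $2s\ge 10$ vertices of $S$ are common neighbors of the non-adjacent pair $v,z$, contradicting $c=9$. Thus no bad vertex exists, which is the assertion of the lemma.

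I expect the main obstacle to be the structural fact of the second paragraph, i.e.\ pinning down $\operatorname{supp}(\mathbf{z})$ for $z\in\Gamma_2(x)$, since it requires carefully unwinding parts (iii) and (iv) of Lemma~\ref{lem:prepartion supp} and the explicit form of the mate pairs; once it is in hand, both the $s=4$ case and the bad-vertex argument are short counting contradictions built on $c=9$ and $c\le k-2$.
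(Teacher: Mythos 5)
Your proof is correct, and while it shares the paper's opening reduction, its concluding argument is genuinely different. Like the paper, you first observe that a counterexample $y$ must be adjacent to every vertex of $S$, which forces $\mathbf{y}=\mathbf{e}_m+\mathbf{e}_p+\mathbf{e}_q$ with $p,q>2s$, and you extract from Lemma~\ref{lem:prepartion supp}~(iii)--(iv), the explicit mate pairs, and Lemma~\ref{lem: 3 common no neighbor 0 common} the structural fact that $\operatorname{supp}(\mathbf{z})\subseteq\{1,\dots,s\}$ with all signs $+1$ whenever $z\in\Gamma_2(x)$ for some $x\in S$; your unwinding of that fact is sound (in particular $m\notin\operatorname{supp}(\mathbf{z})$, so the agreeing index of the relevant mate pair must be its ``first'' index). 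The endgames then diverge. The paper fixes $z\in\Gamma_2(x_1)$, notes $d(y,z)=2$, splits the nine common neighbours of $y$ and $z$ into the six vertices $x_{\ell_p},x'_{\ell_p}$ plus three vertices $w'$ with $(\mathbf{w'},\mathbf{e}_{j_q}+\mathbf{e}_{\ell_p})=2$, and shows such a $w'$ lies in $\Gamma_2(x_{\ell_{p'}})$ for some $p'$, forcing $j_q\leq s$ against $j_q>2s$. You instead show every neighbour of the bad vertex $v$ carries $m$ in its support, pick $z\in\Gamma_2(v)$, and win either way: if $m\notin\operatorname{supp}(\mathbf{z})$ then Lemma~\ref{lem: neighbor 1 common}~(ii) confines the common neighbours of $v$ and $z$ to the at most six vertices $x_\ell,x'_\ell$ with $\ell\in\operatorname{supp}(\mathbf{z})$, below $c=9$; if $m\in\operatorname{supp}(\mathbf{z})$ then $z$ is likewise adjacent to all of $S$ and $c\geq|S|=2s$. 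The cost of your route is that it needs $2s>9$, i.e.\ $s\geq5$, which the paper never requires; your separate elimination of $s=4$ (a unique vertex in $\Gamma_2(x_1)$ would force $c=k$, against $c\leq k-2$) closes that correctly. Two cosmetic remarks: in the $s=4$ case the two hypothetical vertices of $\Gamma_2(x_1)$ would have \emph{equal} vectors (inner product $3$) rather than being mates, though either reading gives the contradiction; and you should state explicitly that $\Gamma_2(v)\neq\emptyset$ (immediate, since $\Gamma$ is connected and not complete) and that $z\notin S$ (since $v$ is adjacent to all of $S$ but $v\not\sim z$).
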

\begin{proof}
Suppose not, then for every $x\in S$, $y$ is adjacent to $x$ by Lemma \ref{lem: distance between S and non S}. It follows, for $i=1,\ldots,s$, that $|\operatorname{supp}(\mathbf{y})\cap\operatorname{supp}(\mathbf{x_i})|=1,~(\mathbf{y},\mathbf{e}_{\operatorname{supp}(\mathbf{y})\cap\operatorname{supp}(\mathbf{x_i})})=1$ and $|\operatorname{supp}(\mathbf{y})\cap\operatorname{supp}(\mathbf{x'_i})|=1,~(\mathbf{y},\mathbf{e}_{\operatorname{supp}(\mathbf{y})\cap\operatorname{supp}(\mathbf{x'_i})})=1$. Note that $|S|\geq8$ by Lemma \ref{lem:prepartion supp} {\rm (ii)}. Thus
$m\in\operatorname{supp}(\mathbf{y}),~(\mathbf{y},\mathbf{e}_m)=1$ and $ \operatorname{supp}(\mathbf{y})\cap \{1,\ldots,2s\}=\emptyset$. We may assume $\mathbf{y}=\mathbf{e}_m+\mathbf{e}_{j_1}+\mathbf{e}_{j_2}$, where $j_1,j_2\in\{2s+1,\ldots,m-1\}$. 

Let $z$ be a vertex in $\Gamma_2(x_1)$ with $\operatorname{supp}(\mathbf{z})=\{\ell_1,\ell_2,\ell_3\}$. Lemma \ref{lem:prepartion supp} {\rm (iv)} implies that $\ell_1,\ell_2,\ell_3\in\{1,2,\ldots,s\}$ and  $\mathbf{z}=\mathbf{e}_{\ell_1}+\mathbf{e}_{\ell_2}+\mathbf{e}_{\ell_3}$. Note that $y$ and $z$ are not adjacent, as $(\mathbf{y},\mathbf{z})=0$, and both are adjacent to $x_{\ell_1}$. So $d(y,z)=2$ follows and $|\Gamma_1(y)\cap\Gamma_1(z)|=9$ by Lemma \ref{lem:prepartion supp} {\rm (vi)}. Now let us look at the common neighbors of $y$ and $z$. For $w\in \Gamma_1(y)\cap\Gamma_1(z)$, either $(\mathbf{w},\mathbf{e}_{m}+\mathbf{e}_{\ell_p})=2$ or $(\mathbf{w},\mathbf{e}_{j_q}+\mathbf{e}_{\ell_p})=2$, where $q\in\{1,2\}$ and $p\in\{1,2,3\}$. Hence
	\[
	\begin{split}
		9=c&=|\Gamma_1(y)\cap\Gamma_1(z)|\\
		&=|\{w\mid (\mathbf{w}, \mathbf{e}_{m}+\mathbf{e}_{\ell_p})=2,~p\in\{1,2,3\}\}|+|\{w\mid (\mathbf{w},\mathbf{e}_{j_q}+\mathbf{e}_{\ell_p})=2,~q\in\{1,2\},~p\in\{1,2,3\}\}|\\
		&=|\{x_{\ell_1},x'_{\ell_1},x_{\ell_2},x'_{\ell_2},x_{\ell_3},x'_{\ell_3}\}|+|\{w\mid (\mathbf{w},\mathbf{e}_{j_q}+\mathbf{e}_{\ell_p})=2,~q\in\{1,2\},~p\in\{1,2,3\}\}|.
	\end{split}
	\] 
	This means that there must be a vertex $w'\in \Gamma_1(y)\cap\Gamma_1(z)$ such that
	$(\mathbf{w'},\mathbf{e}_{j_{q'}}+\mathbf{e}_{\ell_{p'}})=2$ for some $q'\in\{1,2\}$ and $p'\in\{1,2,3\}$.  For the convenience, we may let $q'=1$ and $p'=1$.
	
	We claim $(\mathbf{w'},\mathbf{e}_m)=0$. Otherwise $(\mathbf{w'},\mathbf{e}_{m})=1$, as $(\mathbf{w'},\mathbf{x}_{\ell_p})\in\{0,1\}$ and $(\mathbf{w'},\mathbf{x}_{\ell_p})\in\{0,1\}$ for $p=1,2,3$, and hence $(\mathbf{w'},\mathbf{e}_m+\mathbf{e}_{\ell_1})=(\mathbf{x_{j_1}},\mathbf{e}_m+\mathbf{e}_{\ell_1})=2$. This contradicts Lemma \ref{lem: neighbor 1 common} {\rm (ii)}, since  $w'$ is not the mate of $x_{\ell_1}$. 
	
	Now it is not hard to see $w'\not\sim x_{\ell_2}$ or $w'\not\sim x_{\ell_3}$, as $(\mathbf{w'},\mathbf{e}_{j_{1}}+\mathbf{e}_{\ell_{1}})=2$ and  $(\mathbf{w'},\mathbf{e}_m)=0$ imply either $(\mathbf{w'},\mathbf{x_{\ell_2}})=0$ or $(\mathbf{w'},\mathbf{x_{\ell_3}})=0$. Let us assume $w'\not\sim x_{\ell_2}$. Observe that both $w'$ and $x_{\ell_2}$ are adjacent to $y$. In other words, $w'\in \Gamma_2(x_{\ell_2})$. Lemma \ref{lem:prepartion supp} {\rm (iv)} forces $j_1\in\{1,\ldots,s\}$, and this gives a contradiction. We complete the proof.
\end{proof}

\begin{lem}\label{lem: support non S}
	Assume $c\geq9$ and $S\neq\emptyset$.  For every $y\in V(\Gamma)-S$, $\operatorname{supp}(\mathbf{y})$ is a $3$-element subset of $\{1,2,\ldots,s\}$. In particular, $y$ has exactly $6$ neighbors in $S$.
\end{lem}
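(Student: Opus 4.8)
The plan is to combine Lemma~\ref{lem: distance 2} with Lemma~\ref{lem:prepartion supp}, exploiting the normalization $\mathbf{x_i}=\mathbf{e}_i+\mathbf{e}_{s+i}+\mathbf{e}_m$, $\mathbf{x'_i}=\mathbf{e}_i-\mathbf{e}_{s+i}+\mathbf{e}_m$ fixed above. Fix $y\in V(\Gamma)-S$. By Lemma~\ref{lem: distance 2} there is a vertex $x\in S$ with $d(x,y)=2$; say $x\in\{x_t,x'_t\}$ with mate $x''\in\{x_t,x'_t\}$. Reading off the coordinates, $\operatorname{supp}(\mathbf{x})=\{t,s+t,m\}$ and $\sigma_x(t)\sigma_{x''}(t)=\sigma_x(m)\sigma_{x''}(m)=1$ while $\sigma_x(s+t)\sigma_{x''}(s+t)=-1$, so Lemma~\ref{lem:prepartion supp} applies with $i_1=t$, $i_2=m$, $i_3=s+t$.

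Part~(i) of that lemma then gives $\operatorname{supp}(\mathbf{y})\cap\{t,s+t,m\}=\emptyset$; in particular $m\notin\operatorname{supp}(\mathbf{y})$. Part~(ii) produces six vertices in $W:=\Gamma_1(x)\cap\Gamma_1(y)\cap S$, and since each vertex of $S$ is some $x_p$ or $x'_p$, it has $m$ in its support; hence $m\in\bigcap_{w\in W}\operatorname{supp}(\mathbf{w})$. By part~(iii) this intersection is a single coordinate equal to $t$ or $m$, so it must be exactly $\{m\}$.

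Next I would show $\operatorname{supp}(\mathbf{y})\subseteq\{1,\ldots,s\}$. Fix $j\in\operatorname{supp}(\mathbf{y})$ (so $j\neq m$). Applying part~(iv) with $i=m$, and noting $\sigma_x(m)=1$, we obtain a mate pair $\{z_1,z'_1\}$ with $z_1\in S$ and $(\mathbf{z_1},\mathbf{e}_m+\sigma_y(j)\mathbf{e}_j)=(\mathbf{z'_1},\mathbf{e}_m+\sigma_y(j)\mathbf{e}_j)=2$. Since $z_1\in S$ we have $\{z_1,z'_1\}=\{x_p,x'_p\}$ for some $p$, and the equality for $z_1$ forces $\sigma_{z_1}(j)\neq 0$, i.e.\ $j\in\operatorname{supp}(\mathbf{x_p})=\{p,s+p,m\}$, hence $j\in\{p,s+p\}$. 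If $j=s+p$, then since the $(s+p)$-coordinates of $\mathbf{x_p}$ and $\mathbf{x'_p}$ are $+1$ and $-1$, the two equalities would force $\sigma_y(s+p)=1$ and $\sigma_y(s+p)=-1$ simultaneously, a contradiction. So $j=p\in\{1,\ldots,s\}$, and as $j$ was arbitrary, $\operatorname{supp}(\mathbf{y})$ is a $3$-element subset of $\{1,\ldots,s\}$.

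For the ``in particular'' part, write $\operatorname{supp}(\mathbf{y})=\{p_1,p_2,p_3\}\subseteq\{1,\ldots,s\}$. For any $q\in\{1,\ldots,s\}$ the coordinates $s+q$ and $m$ exceed $s$, so $\sigma_y(s+q)=\sigma_y(m)=0$ and therefore $(\mathbf{y},\mathbf{x_q})=(\mathbf{y},\mathbf{x'_q})=\sigma_y(q)$; by \eqref{eq inner product} this value lies in $\{0,1\}$, hence equals $1$ precisely when $q\in\operatorname{supp}(\mathbf{y})$. Thus the neighbours of $y$ in $S$ are exactly $x_{p_1},x'_{p_1},x_{p_2},x'_{p_2},x_{p_3},x'_{p_3}$, which is six vertices. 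The main things to get right are the identification of the distinguished coordinate of Lemma~\ref{lem:prepartion supp}(iii) as $m$ (which hinges on $m$ lying in every support in $S$) and the mate-pair argument excluding $j=s+p$; the rest is direct bookkeeping with the fixed basis vectors.
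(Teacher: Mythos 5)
Your proposal is correct and follows essentially the same route as the paper: Lemma~\ref{lem: distance 2} supplies a vertex of $S$ at distance $2$ from $y$, and Lemma~\ref{lem:prepartion supp}~(iv) (via the mate-pair argument) pins $\operatorname{supp}(\mathbf{y})$ inside $\{1,\ldots,s\}$, after which the six neighbours in $S$ are read off directly. You merely spell out in full the bookkeeping (identifying the distinguished coordinate as $m$ and excluding $j=s+p$) that the paper leaves implicit.
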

\begin{proof}
By Lemma \ref{lem: distance 2}, there exists $x\in S$ such that $d(x,y)=2$ holds. Thus there exist $j_1,j_2,j_3\in \{1,2,\ldots,s\}$ such that $\operatorname{supp}(\mathbf{y})=\{j_1,j_2,j_3\}$ and $\mathbf{y}=\mathbf{e}_{j_1}+\mathbf{e}_{j_2}+\mathbf{e}_{j_3}$ by Lemma  \ref{lem:prepartion supp} {\rm (iv)}. Now it is not hard to see that $\Gamma_1(y)\cap S=\{x_{j_1},x'_{j_1},x_{j_2},x'_{j_2},x_{j_3},x'_{j_3}\}$. Therefore, our lemma holds.
\end{proof}

\begin{proof}[\rm\textbf{Proof of Claim \ref{cla: no same support}}]
Suppose that $S\neq\emptyset$. Let $\mathcal{P}'=\{1,2,\ldots,s\}$ and let $\mathcal{B}'=\{\operatorname{supp}(\mathbf{y})\mid y\in V(\Gamma)-S\}$. Then $\mathcal{B}'$ is a family of the $3$-element subsets of $\mathcal{P}'$ by Lemma \ref{lem: support non S}. 
 Now we claim that $(\mathcal{P}',\mathcal{B}')$ is a Steiner triple system $STS(s)$. 

Let $\{i_1,i_2\}$ be a $2$-element subset of $\mathcal{P}'$. We look at the sets $\Gamma_1(x_{i_1})-S$ and $\Gamma_1(x_{i_2})-S$. Since $S$ is clique and $\Gamma$ is a connected regular graph, there exists a positive integer $r$, such that, for every $x\in S$,
\begin{equation}\label{eq: r}
|\Gamma_1(x)-S|=r	
\end{equation}
holds. Thus both $\Gamma_1(x_{i_1})-S$ and $\Gamma_1(x_{i_2})-S$ are not empty. Let $y$ be a vertex in $\Gamma_1(x_{i_1})-S$. Then $(\mathbf{y},\mathbf{e}_{i_1})=1$ by Lemma \ref{lem: support non S}.  If $y\sim x_{i_2}$, then $(\mathbf{y},\mathbf{e}_{i_2})=1$ and $\{i_1,i_2\}$ is contained in the block $\operatorname{supp}(\mathbf{y})$. If $y\not\sim x_{i_2}$, then $d(y,x_{i_2})=2$ by Lemma \ref{lem: distance between S and non S}. From Lemma \ref{lem:prepartion supp} {\rm (v)}, there exists $z\in\Gamma_1(x_{i_2})\cap\Gamma_1(y)-S$ such that $i_1,i_2\in \operatorname{supp}(\mathbf{z})$, that is, $\{i_1,i_2\}$ is contained in the block $\operatorname{supp}(\mathbf{z})$. For the uniqueness of blocks containing  $\{i_1,i_2\}$, it is not hard to check. Therefore $(\mathcal{P}',\mathcal{B}')$ is a Steiner triple system $STS(s)$ and its block graph is the subgraph of $\Gamma$ induced on $V(\Gamma)-S$.

Let $\Gamma_1$ be the subgraph induced on $S$ and $\Gamma_2$  the subgraph induced on $V(\Gamma)-S$. Then $\Gamma_1$ is a clique with $2s$ vertices, and $\Gamma_2$ is a strongly regular graph with parameters $(\frac{s(s-1)}{6},\frac{3(s-3)}{2},\frac{s+3}{2}, 9)$ (see \cite[p.~5]{Cioaba.2014}). From \eqref{eq: r}, every vertex in $\Gamma_1$ has exactly $r$ neighbors in $\Gamma_2$, and from Lemma \ref{lem: support non S}, every vertex in $\Gamma_2$ has exactly $6$ neighbors in $\Gamma_2$. Thus
\begin{equation}\label{eq: value 1 of r}
r\cdot 2s=6\cdot \frac{s(s-1)}{6}.
\end{equation}
Considering that $\Gamma$ is regular, we have
\begin{equation}\label{eq: value 2 of r}
(2s-1)+r=\frac{3(s-3)}{2}+6.
\end{equation}
Solving \eqref{eq: value 1 of r} and \eqref{eq: value 2 of r}, we obtain $s=3$, which contradicts the fact $s\geq4$. 

The proof is completed.
\end{proof}

\section{Proof of Theorem \ref{1 integrability}}\label{proof the 1-integrability}
In this section, we will give the proof of Theorem \ref{1 integrability}. But before that, we need to introduce our main tool: Hoffman graphs.

\subsection{Hoffman graphs}
In this subsection, we will give the basic definitions and lemmas related to Hoffman graphs. For more details, we refer the readers to \cite{Koolen.2021a}.

\begin{de}A Hoffman graph $\mathfrak{h}$ is a pair $(H,\ell)$, where $H=(V(H),E(H))$ is a graph and $\ell (H):V \to\{f,s\}$ is a labeling map satisfying the following conditions:
	\begin{enumerate}
		\item vertices with label $f$ are pairwise non-adjacent,
		\item every vertex with label $f$ is adjacent to at least one vertex with label $s$.
	\end{enumerate}
\end{de}
We call a vertex with label $s$ a \emph{slim vertex}, and a vertex with label $f$ a \emph{fat vertex}. We denote by $V_{\mathrm{slim}}(\mathfrak{h})$ (resp.~$V_{\mathrm{fat}}(\mathfrak{h})$) the set of slim (resp.~fat) vertices of $\mathfrak{h}$.

\vspace{0.1cm}
For a vertex $x$ of $\mathfrak{h}$, we define $N_{\mathfrak{h}}^{\mathrm{slim}}(x)$ (resp.~$N_{\mathfrak{h}}^{\mathrm{fat}}(x)$) the set of slim (resp.~fat) neighbors of $x$ in $\mathfrak{h}$. If every slim vertex of $\mathfrak{h}$ has a fat neighbor, then we call $\mathfrak{h}$ \emph{fat}. 
In a similar fashion, we define $N^{\mathrm{slim}}_\mathfrak{h}(x_1,x_2)$ (resp.~$N^{\mathrm{fat}}_\mathfrak{h}(x_1,x_2)$) the set of common slim (resp.~fat) neighbors of $x_1$ and $x_2$ in $\mathfrak{h}$.

\vspace{0.1cm}
The \emph{slim graph} of $\mathfrak{h}:=(H,\ell)$ is the subgraph of $H$ induced on $V_{\mathrm{slim}}(\mathfrak{h})$. Note that any graph can be considered as a Hoffman graph with only slim vertices, and vice versa. We will not distinguish between Hoffman graphs with only slim vertices and graphs.

The \emph{special graph} of $\mathfrak{h}$ is the signed graph
	\[
	\mathcal{S}(\mathfrak{h}):=(V(\mathcal{S}(\mathfrak{h})),E^+(\mathcal{S}(\mathfrak{h})),E^-(\mathcal{S}(\mathfrak{h}))),
	\]
	where $V(\mathcal{S}(\mathfrak{h}))=V_{\mathrm{slim}}(\mathfrak{h})$ and
	\begin{align*}
		E^+(\mathcal{S}(\mathfrak{h}))&=\{\{x,y\}\mid x,y\in V_{\mathrm{slim}}(\mathfrak{h}),\{x,y\}\in E(\mathfrak{h}), N_\mathfrak{h}^{\mathrm{fat}}(x,y)=\emptyset\},\\
		E^-(\mathcal{S}(\mathfrak{h}))&=\{\{x,y\}\mid x,y\in V_{\mathrm{slim}}(\mathfrak{h}),\{x,y\}\in E(\mathfrak{h}), |N_\mathfrak{h}^{\mathrm{fat}}(x,y)|\geq2\}\\
		&\cup \{\{x,y\}\mid x,y\in V_{\mathrm{slim}}(\mathfrak{h}),x\neq y,\{x,y\}\not\in E(\mathfrak{h}), N_\mathfrak{h}^{\mathrm{fat}}(x,y)\neq\emptyset\}.
	\end{align*}

\vspace{0.1cm}
A Hoffman graph $\mathfrak{h}_1:= (H_1, \ell_1)$ is called an (\emph{proper}) \emph{induced Hoffman subgraph} of $\mathfrak{h}:=(H, \ell)$, if $H_1$ is an (proper) induced subgraph of $H$ and $\ell_1(x) = \ell(x)$ holds for every vertex $x$ of $H_1$.

\vspace{0.1cm}
Let $W$ be a subset of $V_{\mathrm{slim}}(\mathfrak{h})$. An induced Hoffman subgraph of $\mathfrak{h}$ generated by $W$, denoted by $\langle W\rangle_{\mathfrak{h}}$, is the Hoffman subgraph of $\mathfrak{h}$ induced on $W \cup\{f\in V_{\mathrm{fat}}(\mathfrak{h})\mid f \sim w \text{ for some }w\in W \}$.

\vspace{0.1cm}
For a fat vertex $f$ of $\mathfrak{h}$, a \emph{quasi-clique} (with respect to $f$) is a subgraph of the slim graph of $\mathfrak{h}$ induced on the slim vertices adjacent to $f$ in $\mathfrak{h}$, and we denote it by $Q_{\mathfrak{h}}(f)$.


\begin{de}\label{fatnbr}
Let $\mathfrak{h}$ be a Hoffman graph with $n$ slim vertices. The special matrix $Sp(\mathfrak{h})$ of $\mathfrak{h}$ is an $n\times n$ matrix indexed by the slim vertex set of $\mathfrak{h}$ such that for any two slim vertices $x$ and $y$ of $\mathfrak{h}$,
	\[
Sp(\mathfrak{h})_{x,y}=\left\{
\begin{array}{ll}
    -|N_\mathfrak{h}^{\mathrm{fat}}(x)| & \text{if } x=y, \\
	1-|N_\mathfrak{h}^{\mathrm{fat}}(x,y)| & \text{if } x\sim y, \\
	-|N_\mathfrak{h}^{\mathrm{fat}}(x,y)| & \text{otherwise}.
\end{array}
\right.
\]
\end{de}

The \emph{eigenvalues} of $\mathfrak{h}$ are the eigenvalues of its special matrix $Sp(\mathfrak{h})$, and the smallest eigenvalue of $\mathfrak{h}$ is denoted by  $\lambda_{\min}(\mathfrak{h})$. 

For the smallest eigenvalues of Hoffman graphs and their induced Hoffman subgraphs, Woo and Neumaier showed the following inequality.
\begin{lem}[{\cite[Corollary 3.3]{Woo}}]\label{hoff}
	If $\mathfrak{h}_1$ is an induced Hoffman subgraph of a Hoffman graph $\mathfrak{h}$, then $\lambda_{\min}(\mathfrak{h}_1)\geq\lambda_{\min}(\mathfrak{h})$ holds.
\end{lem}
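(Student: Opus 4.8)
The plan is to represent the special matrix of a Hoffman graph through its slim--fat incidence data and then argue by interlacing together with positive semidefiniteness. Write $A(\mathfrak{h})$ for the adjacency matrix of the slim graph of $\mathfrak{h}$, and let $N(\mathfrak{h})$ be the $0/1$ incidence matrix whose rows are indexed by $V_{\mathrm{slim}}(\mathfrak{h})$, whose columns are indexed by $V_{\mathrm{fat}}(\mathfrak{h})$, and whose $(x,f)$-entry is $1$ exactly when $x$ is adjacent to $f$ in $\mathfrak{h}$. Since the fat vertices of $\mathfrak{h}$ are pairwise non-adjacent, $\bigl(N(\mathfrak{h})N(\mathfrak{h})^{\top}\bigr)_{xy}=|N_{\mathfrak{h}}^{\mathrm{fat}}(x,y)|$ for $x\neq y$ and $\bigl(N(\mathfrak{h})N(\mathfrak{h})^{\top}\bigr)_{xx}=|N_{\mathfrak{h}}^{\mathrm{fat}}(x)|$; comparing with Definition \ref{fatnbr} gives the identity $Sp(\mathfrak{h})=A(\mathfrak{h})-N(\mathfrak{h})N(\mathfrak{h})^{\top}$. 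First I would record this identity, together with the corresponding one $Sp(\mathfrak{h}_1)=A(\mathfrak{h}_1)-N(\mathfrak{h}_1)N(\mathfrak{h}_1)^{\top}$, where, setting $W:=V_{\mathrm{slim}}(\mathfrak{h}_1)$, the matrix $A(\mathfrak{h}_1)$ is the principal submatrix of $A(\mathfrak{h})$ indexed by $W$ and $N(\mathfrak{h}_1)$ is the submatrix of $N(\mathfrak{h})$ with rows indexed by $W$ and columns indexed by $V_{\mathrm{fat}}(\mathfrak{h}_1)$.

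Next I would compare $Sp(\mathfrak{h}_1)$ with the principal submatrix $M$ of $Sp(\mathfrak{h})$ indexed by $W$. Writing $n_f$ for the restriction to $W$ of the column of $N(\mathfrak{h})$ at a fat vertex $f$, we have $M=A(\mathfrak{h}_1)-\sum_{f\in V_{\mathrm{fat}}(\mathfrak{h})}n_f n_f^{\top}$, whence
\[
Sp(\mathfrak{h}_1)-M=\sum_{f\in V_{\mathrm{fat}}(\mathfrak{h})\setminus V_{\mathrm{fat}}(\mathfrak{h}_1)} n_f n_f^{\top},
\]
a sum of positive semidefinite rank-one matrices (the fat vertices of $\mathfrak{h}$ with no slim neighbour in $W$ contribute the zero matrix, and the remaining terms are genuine outer products). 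By Weyl's inequality, adding a positive semidefinite matrix never decreases the smallest eigenvalue, so $\lambda_{\min}(\mathfrak{h}_1)=\lambda_{\min}(Sp(\mathfrak{h}_1))\geq\lambda_{\min}(M)$. Finally, Cauchy's interlacing theorem applied to the real symmetric matrix $Sp(\mathfrak{h})$ and its principal submatrix $M$ yields $\lambda_{\min}(M)\geq\lambda_{\min}(Sp(\mathfrak{h}))=\lambda_{\min}(\mathfrak{h})$, and chaining the two inequalities gives the claim.

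I expect the linear algebra to be entirely routine; the one point that genuinely needs attention is that the definition of an induced Hoffman subgraph allows $\mathfrak{h}_1$ to drop fat vertices of $\mathfrak{h}$ that still have slim neighbours inside $W$, so in general $Sp(\mathfrak{h}_1)\neq M$ and a bare interlacing argument is insufficient --- this is precisely what forces the positive-semidefiniteness step above. One should also keep the Hoffman-graph axioms in mind when forming $\mathfrak{h}_1$ (each retained fat vertex must keep a slim neighbour in $W$, and fat vertices stay pairwise non-adjacent), but these constraints do not interact with the eigenvalue estimate; they only ensure that $\mathfrak{h}_1$ is a legitimate Hoffman graph so that $Sp(\mathfrak{h}_1)$ is defined in the first place.
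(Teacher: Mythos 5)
Your argument is correct. Note that the paper does not prove this lemma at all --- it imports it as \cite[Corollary 3.3]{Woo} --- so you have supplied a self-contained derivation where none was given. The identity $Sp(\mathfrak{h})=A(\mathfrak{h})-N(\mathfrak{h})N(\mathfrak{h})^{\top}$ is exactly right given Definition \ref{fatnbr} and the fact that fat vertices are pairwise non-adjacent, and you correctly isolate the one genuine subtlety: an induced Hoffman subgraph may discard fat vertices that still have slim neighbours in $W$, so the special matrix of $\mathfrak{h}_1$ is \emph{not} in general a principal submatrix of $Sp(\mathfrak{h})$, and naive interlacing alone does not suffice. Your fix --- writing $Sp(\mathfrak{h}_1)-M$ as a sum of outer products $n_fn_f^{\top}$ over the discarded fat vertices, hence positive semidefinite, then applying Weyl followed by Cauchy interlacing --- closes that gap cleanly. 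The original Woo--Neumaier proof reaches the same conclusion by way of (reduced) representations of norm $t$: a Hoffman graph has smallest eigenvalue at least $-t$ precisely when it admits such a representation, and a representation restricts to any induced Hoffman subgraph; your route trades that machinery for two standard matrix inequalities, which is arguably more elementary and equally rigorous.
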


\begin{de}\label{directsummatrix}
	Let $\mathfrak{h}^1$ and $\mathfrak{h}^2$ be two Hoffman graphs. A Hoffman graph $\mathfrak{h}$ is the sum of $\mathfrak{h}^1$ and $\mathfrak{h}^2$, denoted by $\mathfrak{h} =\mathfrak{h}^1\uplus\mathfrak{h}^2$, if $\mathfrak{h}$ satisfies the following condition:
	
	There exists a partition $\big\{V_{\mathrm{slim}}^1(\mathfrak{h}),V_{\mathrm{slim}}^2(\mathfrak{h})\big\}$ of $V_{\mathrm{slim}}(\mathfrak{h})$ such that induced Hoffman subgraphs generated by $V_{\mathrm{slim}}^i(\mathfrak{h})$ are $\mathfrak{h}^i$ for $i=1,2$ and
	\[Sp(\mathfrak{h})=
	\begin{pmatrix}
		Sp(\mathfrak{h}^1) & O \\
		O& Sp(\mathfrak{h}^2)
	\end{pmatrix}
	\] with respect to the partition $\big\{V_{\mathrm{slim}}^1(\mathfrak{h}),V_{\mathrm{slim}}^2(\mathfrak{h})\big\}$ of $V_{\mathrm{slim}}(\mathfrak{h})$.
\end{de}

We can check that $\mathfrak{h}$ is a sum of two non-empty Hoffman graphs if and only if its special matrix $Sp(\mathfrak{h})$ is a block matrix with at least two blocks. If $\mathfrak{h} =\mathfrak{h}^1 \uplus \mathfrak{h}^2$ for some non-empty Hoffman subgraphs $\mathfrak{h}^1$ and $\mathfrak{h}^2$, then we call $\mathfrak{h}$ \emph{decomposable} with $\{\mathfrak{h}^1,\mathfrak{h}^2\}$ as a \emph{decomposition} and call $\mathfrak{h}^1, \mathfrak{h}^2$ \emph{factors} of $\mathfrak{h}$. Otherwise, $\mathfrak{h}$ is called \emph{indecomposable}. Note that a Hoffman graph is indecomposable if and only if its special graph is connected.

There is an equivalent way to define the sum of two Hoffman graphs as follows.
\begin{lem}[{\cite[Lemma 2.11]{kyy}}]\label{combi}
	Let $\mathfrak{h}$ be a Hoffman graph and $\mathfrak{h}^1$ and $\mathfrak{h}^2$ be two induced Hoffman subgraphs of $\mathfrak{h}$. The Hoffman graph $\mathfrak{h}$ is the sum of $\mathfrak{h}^1$ and $\mathfrak{h}^2$ if and only if $\mathfrak{h}^1$, $\mathfrak{h}^2$, and $\mathfrak{h}$ satisfy the following conditions:
	\begin{enumerate}
		\item $V(\mathfrak{h})=V(\mathfrak{h}^1)\cup V(\mathfrak{h}^2);$
		\item $\big\{V_{\mathrm{slim}}(\mathfrak{h}^1),V_{\mathrm{slim}}(\mathfrak{h}^2)\big\}$ is a partition of $V_{\mathrm{slim}}(\mathfrak{h});$
		\item if $x \in V_{\mathrm{slim}}(\mathfrak{h}^i),~f \in V_{\mathrm{fat}}(\mathfrak{h})$ and $x\sim f$, then $f\in V_{\mathrm{fat}}(\mathfrak{h}^i);$
		\item if $x \in V_{\mathrm{slim}}(\mathfrak{h}^1)$ and $y \in V_{\mathrm{slim}}(\mathfrak{h}^2)$, then $x$ and $y$ have at most one common fat neighbor, and they have one if and only if they are adjacent.
	\end{enumerate}
\end{lem}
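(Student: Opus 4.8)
The plan is to unravel Definition~\ref{directsummatrix} and match it against the four combinatorial conditions; the one thing to keep in mind is that in Definition~\ref{directsummatrix} the factors $\mathfrak{h}^1,\mathfrak{h}^2$ are required to be the induced Hoffman subgraphs \emph{generated by} the two cells of a partition of $V_{\mathrm{slim}}(\mathfrak{h})$, so I will identify the cell $V^i_{\mathrm{slim}}(\mathfrak{h})$ with $V_{\mathrm{slim}}(\mathfrak{h}^i)$ throughout and write $\mathfrak{h}^i=\langle V_{\mathrm{slim}}(\mathfrak{h}^i)\rangle_{\mathfrak{h}}$.

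For the forward implication, suppose $\mathfrak{h}=\mathfrak{h}^1\uplus\mathfrak{h}^2$. Then $\{V_{\mathrm{slim}}(\mathfrak{h}^1),V_{\mathrm{slim}}(\mathfrak{h}^2)\}$ partitions $V_{\mathrm{slim}}(\mathfrak{h})$, which is (ii). Since $\mathfrak{h}^i=\langle V_{\mathrm{slim}}(\mathfrak{h}^i)\rangle_{\mathfrak{h}}$, its fat vertices are exactly the fat vertices of $\mathfrak{h}$ having a slim neighbor in $V_{\mathrm{slim}}(\mathfrak{h}^i)$; in particular (iii) holds, and since every fat vertex of $\mathfrak{h}$ has a slim neighbor by the Hoffman-graph axioms, it lies in $\mathfrak{h}^1$ or $\mathfrak{h}^2$, giving (i). For (iv), pick $x\in V_{\mathrm{slim}}(\mathfrak{h}^1)$ and $y\in V_{\mathrm{slim}}(\mathfrak{h}^2)$; the block form of $Sp(\mathfrak{h})$ forces $Sp(\mathfrak{h})_{x,y}=0$, and comparing with the three cases of Definition~\ref{fatnbr} yields $|N^{\mathrm{fat}}_{\mathfrak{h}}(x,y)|=1$ when $x\sim y$ and $|N^{\mathrm{fat}}_{\mathfrak{h}}(x,y)|=0$ when $x\not\sim y$, which is exactly (iv).

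For the converse, assume (i)--(iv). The key step is to verify $\mathfrak{h}^i=\langle V_{\mathrm{slim}}(\mathfrak{h}^i)\rangle_{\mathfrak{h}}$: a fat vertex of $\mathfrak{h}^i$ has a slim neighbor in $\mathfrak{h}^i$, hence in $V_{\mathrm{slim}}(\mathfrak{h}^i)$, while conversely (iii) says any fat vertex of $\mathfrak{h}$ adjacent to a slim vertex of $V_{\mathrm{slim}}(\mathfrak{h}^i)$ is a fat vertex of $\mathfrak{h}^i$; as $\mathfrak{h}^i$ is an induced Hoffman subgraph, this determines it, and (i),(ii) confirm the vertex sets match up. Consequently $N^{\mathrm{fat}}_{\mathfrak{h}}(x)\subseteq V_{\mathrm{fat}}(\mathfrak{h}^i)$ and $N^{\mathrm{fat}}_{\mathfrak{h}}(x,y)\subseteq V_{\mathrm{fat}}(\mathfrak{h}^i)$ for $x,y\in V_{\mathrm{slim}}(\mathfrak{h}^i)$, so the diagonal blocks of $Sp(\mathfrak{h})$ agree with $Sp(\mathfrak{h}^1)$ and $Sp(\mathfrak{h}^2)$. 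The off-diagonal block vanishes by the same case analysis of Definition~\ref{fatnbr} as before, now reading (iv) in the other direction. Thus the partition together with the generated subgraphs satisfies Definition~\ref{directsummatrix}, so $\mathfrak{h}=\mathfrak{h}^1\uplus\mathfrak{h}^2$.

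The only mildly delicate point, and the one I would take care over, is the fat-vertex bookkeeping: one must combine the Hoffman-graph axiom that every fat vertex has a slim neighbor with condition (iii) to be sure that $V_{\mathrm{fat}}(\mathfrak{h}^i)$ is precisely $\{f\in V_{\mathrm{fat}}(\mathfrak{h})\mid f\sim w\text{ for some }w\in V_{\mathrm{slim}}(\mathfrak{h}^i)\}$, so that restricting $Sp(\mathfrak{h})$ to a cell genuinely reproduces $Sp(\mathfrak{h}^i)$ rather than a matrix in which some fat neighbors are undercounted. Everything else is a direct translation between the entries listed in Definition~\ref{fatnbr} and the adjacency and common-fat-neighbor data.
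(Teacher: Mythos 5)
The paper does not prove this lemma at all: it is imported verbatim as \cite[Lemma 2.11]{kyy}, so there is no in-paper argument to compare yours against. Your direct verification from Definitions \ref{fatnbr} and \ref{directsummatrix} is correct and complete: the forward direction correctly reads conditions (i)--(iii) off the fact that the factors are the generated subgraphs $\langle V_{\mathrm{slim}}(\mathfrak{h}^i)\rangle_{\mathfrak{h}}$ (using the Hoffman-graph axiom that every fat vertex has a slim neighbor to get (i)), and extracts (iv) from the vanishing off-diagonal block of $Sp(\mathfrak{h})$; the converse correctly pins down $V_{\mathrm{fat}}(\mathfrak{h}^i)$ as exactly the fat vertices with a slim neighbor in the cell, which is the point you rightly flag as the only delicate step, since it is what guarantees the diagonal blocks of $Sp(\mathfrak{h})$ reproduce $Sp(\mathfrak{h}^i)$ without undercounting fat neighbors. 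No gaps.
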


\begin{de}
For a Hoffman graph $\mathfrak{h}$ and a positive integer $m$, 
\begin{enumerate}
	\item if there is a mapping $\phi: V(\mathfrak{h}) \to \mathbb{Z}^m$ satisfying
	\[
	(\phi(x),\phi(y))=\left\{
	\begin{array}{ll}
		t & \text{if } x=y \text{ and } x,y\in V_{\mathrm{slim}}(\mathfrak{h}), \\
		1 & \text{if } x=y \text{ and } x,y\in V_{\mathrm{fat}}(\mathfrak{h}), \\
		1 & \text{if } x\sim y, \\
		0 & \text{otherwise},
	\end{array}
	\right.
	\]
	then $\mathfrak{h}$ has an \emph{integral representation} of norm $t$.
	\item if there is a mapping $\psi: V_{\mathrm{slim}}(\mathfrak{h}) \to \mathbb{Z}^m$ satisfying
	\[
	(\psi(x),\psi(y))=\left\{
	\begin{array}{ll}
		t-|N_\mathfrak{h}^{\mathrm{fat}}(x)| & \text{if } x=y, \\
		1-|N_\mathfrak{h}^{\mathrm{fat}}(x,y)| & \text{if } x\sim y, \\
		-|N_\mathfrak{h}^{\mathrm{fat}}(x,y)| & \text{otherwise},
	\end{array}
	\right.
	\]
	then $\mathfrak{h}$ has an \emph{integral reduced representation} of norm $t$.
\end{enumerate}
\end{de}

\begin{lem}[{cf.~\cite[Theorem $2.8$]{HJAT}}]\label{rela}
	For a Hoffman graph $\mathfrak{h}$, the following conditions are equivalent:
	\begin{enumerate}
		\item $\mathfrak{h}$ has an integral representation of norm $t$;
		\item $\mathfrak{h}$ has an integral reduced representation of norm $t$.
    \end{enumerate}
\end{lem}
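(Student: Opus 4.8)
The plan is to prove both implications by an explicit change of representation in which each fat vertex of $\mathfrak{h}$ contributes one new orthonormal coordinate.

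For $(1)\Rightarrow(2)$, suppose $\phi\colon V(\mathfrak{h})\to\mathbb{Z}^m$ is an integral representation of norm $t$. Because the fat vertices of $\mathfrak{h}$ are pairwise non-adjacent, the defining equations force $(\phi(f),\phi(f'))=\delta_{f,f'}$, so $\{\phi(f)\mid f\in V_{\mathrm{fat}}(\mathfrak{h})\}$ is an orthonormal set of integral vectors. I would then set
\[
\psi(x)=\phi(x)-\sum_{f\in N^{\mathrm{fat}}_{\mathfrak{h}}(x)}\phi(f)\qquad(x\in V_{\mathrm{slim}}(\mathfrak{h})),
\]
which is again an integral vector. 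Expanding $(\psi(x),\psi(y))$ and using that $(\phi(x),\phi(f))=1$ exactly when $x\sim f$, together with the orthonormality of the $\phi(f)$, the three cross-terms contribute $-2|N^{\mathrm{fat}}_{\mathfrak{h}}(x,y)|+|N^{\mathrm{fat}}_{\mathfrak{h}}(x,y)|$, so the whole expression collapses to $(\phi(x),\phi(y))-|N^{\mathrm{fat}}_{\mathfrak{h}}(x,y)|$. Substituting the three cases of the definition of $\phi$ (with $N^{\mathrm{fat}}_{\mathfrak{h}}(x,x)=N^{\mathrm{fat}}_{\mathfrak{h}}(x)$) yields exactly the three cases required of an integral reduced representation of norm $t$.

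For $(2)\Rightarrow(1)$, suppose $\psi\colon V_{\mathrm{slim}}(\mathfrak{h})\to\mathbb{Z}^m$ is an integral reduced representation of norm $t$. I would enlarge the ambient lattice to $\mathbb{Z}^{m+|V_{\mathrm{fat}}(\mathfrak{h})|}$, take an orthonormal family $\{\mathbf{g}_f\mid f\in V_{\mathrm{fat}}(\mathfrak{h})\}$ of integral vectors spanning the new coordinates (hence orthogonal to the image of $\psi$), and define $\phi(f)=\mathbf{g}_f$ for each fat vertex $f$ and $\phi(x)=\psi(x)+\sum_{f\in N^{\mathrm{fat}}_{\mathfrak{h}}(x)}\mathbf{g}_f$ for each slim vertex $x$. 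All images are integral. Checking the inner products is routine: fat-fat pairs give $\delta_{f,f'}$ (consistent with fat vertices being non-adjacent); a fat vertex $f$ and a slim vertex $x$ give $1$ or $0$ according as $f\sim x$ or not; and a slim pair $x,y$ gives $(\psi(x),\psi(y))+|N^{\mathrm{fat}}_{\mathfrak{h}}(x,y)|$, which by the three defining cases of $\psi$ equals $t$, $1$, or $0$ according as $x=y$, $x\sim y$, or neither.

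There is essentially no genuine obstacle here: both maps are the obvious ``add or subtract the indicator combination of fat neighbors'' transformations, integrality is preserved because only integer vectors are ever added, and the dimension bookkeeping is harmless. The only place requiring a little care is the slim-slim computation, where the cross-terms between $\sum_f\phi(f)$ (resp.\ $\sum_f\mathbf{g}_f$) on the two sides must be seen to contribute precisely $|N^{\mathrm{fat}}_{\mathfrak{h}}(x,y)|$, so that the count of common fat neighbors cancels correctly against the $|N^{\mathrm{fat}}_{\mathfrak{h}}(x,y)|$-terms appearing in the definitions.
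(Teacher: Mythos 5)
Your proof is correct: both constructions (subtracting, respectively adding, the orthonormal vectors attached to the fat neighbors) verify the required inner products in all cases, and integrality is clearly preserved. The paper itself gives no proof of this lemma but simply cites it from the reference, and your argument is the standard one used there, so there is nothing further to compare.
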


\begin{thm}[{cf.~\cite[Theorem $3.7$]{HJAT}}]\label{HJAT 1-integrable}
	Given a fat indecomposable Hoffman graph with smallest eigenvalue at least $-3$, if one of its slim vertices has at least two fat neighbors, then this Hoffman graph has an integral reduced representation of norm $3$. 
\end{thm}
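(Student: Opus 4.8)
The plan is to build the representation directly. By Lemma~\ref{rela} it suffices to produce an integral reduced representation of norm $3$, i.e.\ integral vectors $\{u_x\mid x\in V_{\mathrm{slim}}(\mathfrak{h})\}$ whose Gram matrix is $Sp(\mathfrak{h})+3I$. Since $\lambda_{\min}(\mathfrak{h})\geq-3$, this matrix is positive semidefinite with integer entries, and every principal submatrix of it is again positive semidefinite (this is what Lemma~\ref{hoff} gives when applied to the generated Hoffman subgraphs $\langle W\rangle_{\mathfrak{h}}$). Because $\mathfrak{h}$ is fat, the diagonal entry at a slim vertex $x$ equals $3-|N_{\mathfrak{h}}^{\mathrm{fat}}(x)|\in\{0,1,2\}$, so in any integral realization $u_x$ must be one of $\mathbf{0}$, $\pm\mathbf{e}_i$, or $\pm\mathbf{e}_i\pm\mathbf{e}_j$; moreover the distinguished vertex $x_0$ with $|N_{\mathfrak{h}}^{\mathrm{fat}}(x_0)|\geq2$ must be sent to a vector $u_{x_0}$ of norm at most $1$.

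Next I would use indecomposability to run a greedy construction. The special graph $\mathcal{S}(\mathfrak{h})$ is connected, so enumerate the slim vertices $x_0,x_1,x_2,\dots$ beginning with $x_0$ so that each $x_i$ with $i\geq1$ is joined in $\mathcal{S}(\mathfrak{h})$ to some earlier $x_j$; then $Sp(\mathfrak{h})_{x_ix_j}\neq0$, so $(u_{x_i},u_{x_j})\neq0$. Starting from a suitable $u_{x_0}$, adjoin the $u_{x_i}$ one at a time: having realized the principal submatrix on $\{x_0,\dots,x_{i-1}\}$, the nonzero value $(u_{x_i},u_{x_j})=Sp(\mathfrak{h})_{x_ix_j}$ together with $\|u_{x_i}\|^2\leq2$ leaves only a short explicit list of candidates for $u_{x_i}$ (allowing one fresh coordinate), and one checks that the remaining equalities $(u_{x_i},u_{x_t})=Sp(\mathfrak{h})_{x_ix_t}$, $t<i$, can be satisfied. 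Equivalently, this amounts to showing that the integral lattice with Gram matrix $Sp(\mathfrak{h})+3I$ embeds isometrically in some $\mathbb{Z}^m$.

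The main obstacle is precisely this last assertion. Among the $u_x$, those of norm $2$ play the role of roots, and a positive semidefinite integral form with values as constrained here fails to be realizable over $\mathbb{Z}$ only when these roots generate an exceptional root lattice ($E_6$, $E_7$, or $E_8$), since those lattices embed in no $\mathbb{Z}^m$ whereas every orthogonal sum of lattices of types $A$ and $D$ does. This is where the hypothesis that some slim vertex has two fat neighbors is essential: it forces the anchor $u_{x_0}$ to have norm at most $1$, and the connectedness of $\mathcal{S}(\mathfrak{h})$ makes this short vector interact nontrivially with the root part of the configuration; under $\lambda_{\min}(\mathfrak{h})\geq-3$ this is incompatible with an $E_6$, $E_7$, or $E_8$ sublattice, confining the configuration to the $A$/$D$ regime and hence to $\mathbb{Z}^m$. (Alternatively one may appeal to a structure theorem for indecomposable fat Hoffman graphs of smallest eigenvalue at least $-3$ that contain a slim vertex with two fat neighbors, and verify the claim on each graph in the resulting list.)
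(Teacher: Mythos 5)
The paper does not actually prove this statement: it is imported (with a ``cf.'') from \cite[Theorem 3.7]{HJAT}, so there is no internal proof to compare yours against; I can only judge the proposal on its own. It splits into two halves of very different value. The greedy, vertex-by-vertex construction is not a proof and you rightly abandon it: extending a partial integral representation along $\mathcal{S}(\mathfrak{h})$ can get stuck even when a global one exists, and ``one checks that the remaining equalities can be satisfied'' is exactly the content of the theorem. The lattice-theoretic second half is the right (and standard) route: $Sp(\mathfrak{h})+3I$ is positive semidefinite with diagonal entries $3-|N^{\mathrm{fat}}_{\mathfrak{h}}(x)|\in\{0,1,2\}$ because $\mathfrak{h}$ is fat, so the real vectors realizing it generate an integral lattice spanned by vectors of norm at most $2$; by Witt's classification (which you should cite, not merely allude to) this lattice is, modulo its radical, an orthogonal sum of copies of $\mathbb{Z}$ and of root lattices $A_n$, $D_n$, $E_6$, $E_7$, $E_8$, and it embeds in some $\mathbb{Z}^m$ exactly when no exceptional component occurs.

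The genuine gap is the decisive exclusion of $E_6$, $E_7$, $E_8$, which you assert (``the short vector interact[s] nontrivially with the root part \dots incompatible with an $E_6$, $E_7$, or $E_8$ sublattice'') but do not prove; note also that $\lambda_{\min}(\mathfrak{h})\geq-3$ contributes nothing here beyond positive semidefiniteness, and that ``sublattice'' should be ``orthogonal component'' ($E_6$ contains perfectly embeddable sublattices such as $A_2$). The missing argument is: suppose there were an exceptional component $E$ with orthogonal complement $M$. Since $E$ has minimal norm $2$, every generator $u_x$ of norm $1$ lies in $M$, and every generator of norm $2$ lies entirely in $E$ or entirely in $M$ (the norm splitting $1+1$ would require a norm-$1$ vector of $E$). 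Thus the generators fall into two mutually orthogonal classes; since the edges of $\mathcal{S}(\mathfrak{h})$ are precisely the pairs with $(u_x,u_y)\neq 0$, indecomposability (connectedness of $\mathcal{S}(\mathfrak{h})$) forces all generators into one class, and that class cannot be $E$ because the slim vertex with two fat neighbors yields a generator of norm at most $1$ (norm $0$ would isolate it in $\mathcal{S}(\mathfrak{h})$, which is impossible unless $\mathfrak{h}$ has a single slim vertex, a trivial case). Hence $E$ is not generated, a contradiction. With this paragraph and the citation supplied, your argument closes; without them it is an outline rather than a proof.
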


\subsection{Sesqui-regular graphs and Hoffman graphs}
In this subsection, we will use Hoffman graphs as our main tool to complete the proof of Theorem \ref{1 integrability}. Let $\Gamma$ be a connected sesqui-regular graph with parameters $(n,k,c)$, where $c\geq9$, and smallest eigenvalue $\lambda_{\min}(\Gamma)\in[-3,-2)$. To prove that $\Gamma$ is $1$-integrable when $k$ is sufficiently large, the only case needed to be considered is $c\leq k-2$ by Theorem \ref{c equals k or k-1}. Note that $c\leq k-2$ leads straightway to $n-k-1\geq 1+(k-c)\geq 3$.

A crucial result is as follows:

\begin{thm}[{cf.~\cite[Lemma 5.1]{kyy3} and \cite[Theorem 4.1]{Koolen.2021a}}]\label{Hoffman graph}
There exists a positive constant $K_4$ such that, for any connected sesqui-regular graph $\Gamma$ with parameters $(n,k,c)$, where $n-k-1>2$, and with smallest eigennvalue at least $-3$, if $k\geq K_4$, then there exists a fat Hoffman graph $\mathfrak{h}$ which satisfies the followsing:
\begin{enumerate}
	\item $\mathfrak{h}$ has $\Gamma$ as its slim graph,
	\item $\mathfrak{h}$ has smallest eigenvalue at least $-3$,
	\item every quasi-clique in $\mathfrak{h}$ is a clique.
	\end{enumerate} 	
\end{thm}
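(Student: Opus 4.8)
The plan is to assemble $\mathfrak{h}$ from the two cited references, keeping careful track of which hypothesis is used where. The condition $\lambda_{\min}(\Gamma)\ge -3$ together with $k$ being large already suffices to produce a fat Hoffman graph realization of $\Gamma$: this is \cite[Lemma 5.1]{kyy3}, the $\lambda=3$ instance of the general principle that a connected graph with smallest eigenvalue at least $-\lambda$ and large enough minimal valency is the slim graph of a fat Hoffman graph with smallest eigenvalue at least $-\lambda$. So I would first invoke that lemma to get a fat Hoffman graph $\mathfrak{h}$ with slim graph $\Gamma$ and $\lambda_{\min}(\mathfrak{h})\ge -3$, which already gives (i) and (ii); it is convenient to fix $\mathfrak{h}$ by an extremal choice --- say one maximizing the number of incidences between $V_{\mathrm{slim}}(\mathfrak{h})$ and $V_{\mathrm{fat}}(\mathfrak{h})$ among all such realizations of $\Gamma$ --- so that a violation of (iii) can be turned into a contradiction with that maximality.

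For (iii) the sesqui-regularity of $\Gamma$ and the hypothesis $n-k-1>2$ enter, via \cite[Theorem 4.1]{Koolen.2021a}. Suppose toward a contradiction that some quasi-clique $Q_{\mathfrak{h}}(f)$ is not a clique, so there are slim vertices $x,y$, both adjacent to the fat vertex $f$, with $x\not\sim y$ in $\Gamma$. By Lemma \ref{hoff}, $\langle Q_{\mathfrak{h}}(f)\rangle_{\mathfrak{h}}$ again has smallest eigenvalue at least $-3$, and writing out its special matrix one sees that the complement of $Q_{\mathfrak{h}}(f)$ has largest eigenvalue at most $2$, hence is a disjoint union of paths, cycles, and a few exceptional trees, all of maximum degree at most $3$. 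In particular, once $k$ is large $x$ and $y$ sit in a large quasi-clique in which each has at most three non-neighbours, so they have a common neighbour and $d(x,y)=2$; thus $|\Gamma_1(x)\cap\Gamma_1(y)|=c$. One then inspects $\langle\{x,y\}\cup(\Gamma_1(x)\cap\Gamma_1(y))\rangle_{\mathfrak{h}}$ --- a fat Hoffman graph with smallest eigenvalue at least $-3$ in which two non-adjacent slim vertices share the fat vertex $f$ --- and the local analysis of \cite[Theorem 4.1]{Koolen.2021a}, in which the hypothesis $n-k-1>2$ is precisely what discards the complete-multipartite type exceptions such as $K_{\frac{n}{3}\times 3}$, yields a contradiction (equivalently, it produces a way to split or adjoin fat vertices, contradicting the extremal choice of $\mathfrak{h}$). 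Taking $K_4$ to be the larger of the threshold from \cite{kyy3} and the one needed for the ``large quasi-clique'' step then completes the proof.

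The hard part, as is usual with Hoffman-graph arguments, is this last local step: isolating the finite list of small fat Hoffman graphs of smallest eigenvalue at least $-3$ that can occur around a non-adjacent slim pair sharing a fat vertex, and checking each of them against sesqui-regularity. Interlacing (Lemma \ref{hoff}) reduces everything to finitely many explicit integer special matrices, but the bookkeeping --- together with the verification that every admissible configuration is incompatible with $\Gamma$ being sesqui-regular with $n-k-1>2$ and $k$ large --- is what makes this long, which is why we quote \cite[Lemma 5.1]{kyy3} and \cite[Theorem 4.1]{Koolen.2021a} rather than reprove them. A secondary technical point is to make ``$k$ large'' genuinely force the relevant quasi-cliques to be large enough, and to check that the extremal choice of $\mathfrak{h}$ is compatible with the Hoffman-subgraph counting.
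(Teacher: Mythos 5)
The paper does not prove Theorem \ref{Hoffman graph} at all: it is stated with ``cf.'' references and imported wholesale from \cite[Lemma 5.1]{kyy3} and \cite[Theorem 4.1]{Koolen.2021a}. Your ultimate decision to quote those two results and take $K_4$ to be the larger of the two thresholds is therefore exactly what the authors do, and to that extent the proposal matches the paper.

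The reconstruction you wrap around the citations, however, should not be mistaken for the actual argument, and it has a few soft spots. In the cited construction the fat vertices are attached to the members of an edge cover of $\Gamma$ by large (maximal) cliques, produced by the local/Ramsey-type arguments that require $k$ large; condition (iii) then holds \emph{by construction}, rather than being forced a posteriori by a contradiction against an extremal choice of $\mathfrak{h}$. Your device of maximizing slim--fat incidences and then ``splitting or adjoining fat vertices'' is never made concrete, so as written that step is a gap in your version even though it is not needed in the real one. A second, smaller inaccuracy: a connected graph with largest eigenvalue at most $2$ need not have maximum degree at most $3$ --- $K_{1,4}=\tilde D_4$ has largest eigenvalue exactly $2$ and a vertex of degree $4$ --- so the correct statement is that each vertex of a quasi-clique has at most four non-neighbours inside it; this still gives your ``common neighbour once $k$ is large'' conclusion, but the bound should be stated correctly. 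Finally, your reading of the hypothesis $n-k-1>2$ (excluding the nearly complete multipartite configurations whose natural Hoffman covers have non-clique quasi-cliques) is plausible, but that is precisely the content of the cited theorem, not something your sketch re-derives. None of this invalidates the theorem --- the paper itself offers nothing beyond the citations --- but the extra scaffolding you supply is not a substitute for the referenced proofs and should either be removed or tightened into an actual argument.
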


\begin{thm}\label{Hoffman graph integrable}
Let $\Gamma$ be a connected sesqui-regular graph with parameters $(n,k,c)$ and  smallest eigennvalue $\lambda_{\min}(\Gamma)\in[-3,-2)$. Let $\mathfrak{h}$ be a fat Hoffman graph which satisfies the conditions {\rm (i)}--{\rm (iii)} in Theorem \ref{Hoffman graph}. If $k\geq106$ and $k-75\geq c\geq 9$, then $\mathfrak{h}$ has an integral reduced representation of norm $3$.
\end{thm}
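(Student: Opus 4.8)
The plan is to decompose the fat Hoffman graph $\mathfrak{h}$ into its indecomposable factors and apply Theorem \ref{HJAT 1-integrable} to each of them. Recall that $\mathfrak{h}$ has an integral reduced representation of norm $3$ if and only if each of its indecomposable factors does (direct sums of representations give representations of the sum), so it suffices to understand the indecomposable factors of $\mathfrak{h}$. Since $\mathfrak{h}$ is fat, every slim vertex has at least one fat neighbor. By Theorem \ref{HJAT 1-integrable}, an indecomposable fat factor with smallest eigenvalue at least $-3$ will have a norm-$3$ integral reduced representation \emph{provided} one of its slim vertices has at least two fat neighbors. So the crux is to rule out indecomposable factors in which \emph{every} slim vertex has exactly one fat neighbor. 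Call such a factor \emph{thin-fat}. In a thin-fat indecomposable factor, since every slim vertex has exactly one fat neighbor and the special graph is connected, the special graph has only negative edges coming from pairs sharing a fat neighbor, i.e. the factor is essentially a single quasi-clique $Q_{\mathfrak{h}}(f)$ together with its fat vertex $f$; condition (iii) of Theorem \ref{Hoffman graph} then forces this quasi-clique to be a clique. Thus a thin-fat indecomposable factor is a clique all of whose slim vertices are attached to one common fat vertex, and such a Hoffman graph trivially has a norm-$2$ (hence norm-$3$) integral reduced representation.

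**Key steps in order.**
First I would invoke Theorem \ref{Hoffman graph} (valid since $k \ge K_4$; here $k \ge 106$ and $n-k-1 \ge 1+(k-c) \ge 76 > 2$) to get the fat Hoffman graph $\mathfrak{h}$ with $\lambda_{\min}(\mathfrak{h}) \ge -3$, slim graph $\Gamma$, and every quasi-clique a clique. Second, I would decompose $\mathfrak{h} = \mathfrak{h}^1 \uplus \cdots \uplus \mathfrak{h}^r$ into indecomposable factors, using Lemma \ref{combi} to control how fat vertices distribute among the factors (each fat vertex goes to exactly one factor, since a fat vertex adjacent to slim vertices in two different factors would, by Lemma \ref{combi}(iv), be a common fat neighbor of non-adjacent vertices or force extra adjacencies contradicting the block structure of $Sp(\mathfrak{h})$). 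Each $\mathfrak{h}^i$ is fat (being an induced subgraph picking up all relevant fat neighbors) and has $\lambda_{\min}(\mathfrak{h}^i) \ge \lambda_{\min}(\mathfrak{h}) \ge -3$ by Lemma \ref{hoff}. Third, for each factor I would split into two cases: if some slim vertex of $\mathfrak{h}^i$ has $\ge 2$ fat neighbors, Theorem \ref{HJAT 1-integrable} gives a norm-$3$ integral reduced representation directly; otherwise every slim vertex of $\mathfrak{h}^i$ has exactly one fat neighbor, and I argue as above that $\mathfrak{h}^i$ is a clique sharing a single fat vertex, which has an obvious integral reduced representation of norm $2$ (map each slim vertex $x$ to $\mathbf{e}_0 + \mathbf{e}_x$ with $\mathbf{e}_0$ shared and $\mathbf{e}_x$ private, accounting for the one fat neighbor). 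Finally, I would take the direct sum of the representations over all factors, using orthogonal coordinate blocks for distinct factors, to obtain a norm-$3$ integral reduced representation of $\mathfrak{h}$ itself.

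**Main obstacle.**
The hard part is the structural analysis of a thin-fat indecomposable factor — showing that "every slim vertex has exactly one fat neighbor, the special graph is connected, and every quasi-clique is a clique" forces the factor to be a single clique on one fat vertex. The subtlety is that a slim vertex $x$ with unique fat neighbor $f$ could still be a slim neighbor of another slim vertex $y$ whose unique fat neighbor $g \ne f$; then $\{x,y\} \in E^+(\mathcal{S}(\mathfrak{h}^i))$ (a positive special edge, since they share no fat neighbor), so connectedness of the special graph does \emph{not} immediately collapse everything to one quasi-clique. I would need to argue that positive special edges cannot occur in a factor satisfying $\lambda_{\min} \ge -3$ together with the quasi-clique condition and the bound $c \ge 9$, or more likely invoke the precise classification of fat indecomposable Hoffman graphs with smallest eigenvalue at least $-3$ and all quasi-cliques cliques (the numerical hypotheses $k \ge 106$, $k - 75 \ge c$ are almost certainly there to feed into such a classification, e.g. to bound the number of fat vertices or the size of quasi-cliques via an interlacing/counting argument on $\Gamma$). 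This is where I expect the bulk of the work: translating the local combinatorics of $\Gamma$ (sesqui-regularity with $c \ge 9$, large valency) into the statement that no "thin" fat indecomposable factor with a positive special edge can embed in $\mathfrak{h}$, after which Theorem \ref{HJAT 1-integrable} finishes the job.
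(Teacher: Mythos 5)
Your high-level framework matches the paper's: reduce to indecomposable factors, note each factor inherits $\lambda_{\min}\geq-3$ and fatness, apply Theorem \ref{HJAT 1-integrable} when some slim vertex has two fat neighbors, and isolate as the hard case a factor in which every slim vertex has exactly one fat neighbor. You also correctly retract, in your ``main obstacle'' paragraph, your earlier assertion that such a factor collapses to a single quasi-clique: positive special edges (adjacent slim vertices with distinct fat neighbors) are exactly what can keep the special graph connected. But at that point the proposal stops; the entire substantive content of the theorem is the argument you defer, and the route you gesture at (``invoke the precise classification of fat indecomposable Hoffman graphs with smallest eigenvalue at least $-3$'') is not what the paper does and no such classification is cited or available there.

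What the paper actually does for a putative non-representable factor $\mathfrak{g}$ is the following chain, none of which appears in your proposal. First, since every slim vertex of $\mathfrak{g}$ has exactly one fat neighbor and quasi-cliques are cliques, $E^-(\mathcal{S}(\mathfrak{g}))=\emptyset$ and $Sp(\mathfrak{g})=A(\mathcal{S}(\mathfrak{g}))-I$; hence a norm-$3$ integral reduced representation of $\mathfrak{g}$ is precisely a $1$-integrable representation of the graph $\mathcal{S}(\mathfrak{g})$, whose smallest eigenvalue lies in $[-2,-1)$. Non-representability therefore means $\mathcal{S}(\mathfrak{g})$ is not $1$-integrable, and Theorem \ref{thm:cameron} forces $|V_{\mathrm{slim}}(\mathfrak{g})|\leq36$. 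This $36$-vertex bound is the pivot of the whole proof and is missing from your outline. Second, the paper plays this bound against the hypotheses $k\geq106$ and $c\leq k-75$ (together with the fact that two fat vertices have at most two common slim neighbors, coming from a forbidden Hoffman subgraph of smallest eigenvalue $-4$, and an interlacing computation giving $|W|\leq6$) to prove Claim \ref{cla: Dn like}: every edge $xy$ of $\mathcal{S}(\mathfrak{g})$ admits two further neighbors $y_1,y_2$ of $x$ sharing $y$'s fat neighbor. Third, iterating this claim produces an induced subgraph of $\mathcal{S}(\mathfrak{g})$ with smallest eigenvalue below $-2$, contradicting $\lambda_{\min}(\mathcal{S}(\mathfrak{g}))\geq-2$. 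Since your proposal contains neither the Cameron-et-al. size bound nor any counting argument exploiting the sesqui-regular parameters, the crucial step is a genuine gap rather than a routine omission.
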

\begin{proof}
It is sufficient to prove that every indecomposable factor of $\mathfrak{h}$ has an integral reduced representation of norm $3$. Assume that $\mathfrak{g}$ is an indecomposable factor which has no integral reduced representation of norm $3$. Then $\mathfrak{g}$ is fat by Lemma \ref{combi} {\rm (iii)}, as $\mathfrak{h}$ is fat, and 
\begin{equation}\label{eq: g one fat}
|N_{\mathfrak{g}}^{\mathrm{fat}}(x)|=1\text{ for }x\in V_{\mathrm{slim}}(\mathfrak{g})
\end{equation}
by Theorem \ref{HJAT 1-integrable}. Let $\mathcal{S}(\mathfrak{g})$ be the special graph of $\mathfrak{g}$. Since every quasi-clique in $\mathfrak{h}$ is a clique, for any two non-adjacent slim vertices $x$ and $y$ in  $\mathfrak{h}$, $N_\mathfrak{h}^{\mathrm{fat}}(x,y)=\emptyset$ holds. In particular, if both $x$ and $y$ are in  $\mathfrak{g}$, then $N_\mathfrak{g}^{\mathrm{fat}}(x,y)=\emptyset$ also holds. Taking in consideration \eqref{eq: g one fat},  we have $E^-(\mathcal{S}(\mathfrak{g}))=\emptyset$. Hence, we can regard $\mathcal{S}(\mathfrak{g})$ as a graph. The graph $\mathcal{S}(\mathfrak{g})$ is connected, since $\mathfrak{g}$ is indecomposable. 

Notice that $Sp(\mathfrak{g})=A(\mathcal{S}(\mathfrak{g}))-I$, where $A(\mathcal{S}(\mathfrak{g}))$ is the adjacency matrix of $\mathcal{S}(\mathfrak{g})$. Let $\lambda_{\min}(\mathcal{S}(\mathfrak{g}))$ be the smallest eigenvalue of $\mathcal{S}(\mathfrak{g})$. From Lemma \ref{hoff}, we have $\lambda_{\min}(\mathcal{S}(\mathfrak{g}))-1=\lambda_{\min}(\mathfrak{g})\geq\lambda_{\min}(\mathfrak{h})\geq-3$, that is, $\lambda_{\min}(\mathcal{S}(\mathfrak{g}))\geq-2$. Also, $\mathcal{S}(\mathfrak{g})$ is not a complete graph, as $\mathfrak{g}$ has no integral reduced representation of norm $3$.  So
\begin{equation}\label{eq eigenvalue of special graph}
	-2\leq \lambda_{\min}(\mathcal{S}(\mathfrak{g}))<-1.
\end{equation}
Using the fact that $\mathfrak{g}$ has no integral reduced representation of norm $3$ again, we conclude that $\mathcal{S}(\mathfrak{g})$ is not $1$-integrable. By Theorem \ref{thm:cameron}, we obtain
 \begin{equation}\label{eq slim vertex number g}
|V_{\mathrm{slim}}(\mathfrak{g})|=|V(\mathcal{S}(\mathfrak{g}))|\leq 36.
\end{equation}

\begin{cla}\label{cla: Dn like}
 Let $x$ and $y$ be two adjacent vertices in $\mathcal{S}(\mathfrak{g})$. There exist two vertices $y_1,y_2~(\neq y)$ in $\mathcal{S}(\mathfrak{g})$ such that both of them are adjacent to $x$ in  $\mathcal{S}(\mathfrak{g})$ and satisfy $N_{\mathfrak{g}}^{\mathrm{fat}}(y_1)=N_{\mathfrak{g}}^{\mathrm{fat}}(y_2)=N_{\mathfrak{g}}^{\mathrm{fat}}(y)$ in $\mathfrak{g}$.  
\end{cla}

Before showing Claim \ref{cla: Dn like}, we finish the proof of Theorem \ref{Hoffman graph integrable} by using this claim. 
Since the quasi-clique with respect to a fixed fat vertex in $\mathfrak{g}$ is a clique and every slim vertex in  $\mathfrak{g}$  has exactly one fat neighbor in $\mathfrak{g}$ by \eqref{eq: g one fat}, we observe that for any two vertices $z_1,z_2$ in $\mathcal{S}(\mathfrak{g})$, 
\begin{equation}\label{eq non adjacent}
	z_1\not\sim z_2 \text{ in }\mathcal{S}(\mathfrak{g}) \text{ if }N_{\mathfrak{g}}^{\mathrm{fat}}(z_1)=N_{\mathfrak{g}}^{\mathrm{fat}}(z_2) \text{ in }\mathfrak{g}.
\end{equation}

Now under the assumption that Claim \ref{cla: Dn like} holds, we
easily figure out that $\mathcal{S}(\mathfrak{g})$ contains an induced subgraph with smallest eigenvalue less than $-2$ as follows. Since $\mathcal{S}(\mathfrak{g})$ is connected, there are two adjacent vertices $x$ and $y$ in $\mathcal{S}(\mathfrak{g})$. By Claim \ref{cla: Dn like}, there exist two more vertices $y_1$ and $y_2$ which are adjacent to $x$ in $\mathcal{S}(\mathfrak{g})$ and have the same fat neighbor as $y$ in $\mathfrak{g}$. By \eqref{eq non adjacent}, the three vertices $y,y_1,y_2$ are pairwise non-adjacent in $\mathcal{S}(\mathfrak{g})$. Considering that the positions of $x$ and $y$ are symmetric. Using Claim \ref{cla: Dn like} again, we can also find two more vertices $x_1$ and $x_2$ which are adjacent to $y$ in $\mathcal{S}(\mathfrak{g})$ and satisfy $N_{\mathfrak{g}}^{\mathrm{fat}}(x_1)=N_{\mathfrak{g}}^{\mathrm{fat}}(x_2)=N_{\mathfrak{g}}^{\mathrm{fat}}(x)$ in $\mathfrak{g}$.  Similarly, we have that $x,x_1,x_2$ are pairwise non-adjacent in $\mathcal{S}(\mathfrak{g})$ by \eqref{eq non adjacent}.
\begin{enumerate}
	\item If either $y_1$ or $y_2$ is adjacent to one of $x_1$ and $x_2$, then $\mathcal{S}(\mathfrak{g})$ will contain one of the graphs in Figure \ref{fig:1} 
	\begin{figure}[H]
		\centering
		\begin{tikzpicture}[thick][node distance=1cm,on grid]
			\node[circle,inner sep=0.64mm,draw=black,fill=black](x)[label=left:$x$]{};
			\node[circle,inner sep=0.64mm,draw=black,fill=black](y)[right=of x,xshift=-0.4cm,yshift=0.5cm,label=above:$y$]{};
			\node[circle,inner sep=0.64mm,draw=black,fill=black](y1)[right=of x,xshift=-0.4cm,yshift=-0.5cm, label=below:$y_1~\text{or}~y_2$]{};
			\node[circle,inner sep=0.64mm,draw=black,fill=black](x1)[right=of y,xshift=-0.2cm,label=right:$x_i\text{,}~i\in\{1\text{,}2\}$]{};
			\node[circle,inner sep=0.64mm,draw=black,fill=black](x2)[right=of y1,xshift=-0.2cm,label=right:$x_{3-i}$]{};
			\draw[-](x)to(y);
			\draw[-](x)to(y1);
			\draw[-](y)to(x1);
			\draw[-](y)to(x2);
			\draw[-](y1)to(x1);
		\end{tikzpicture}\hspace{1cm}
		\begin{tikzpicture}[thick][node distance=1cm,on grid]
			\node[circle,inner sep=0.64mm,draw=black,fill=black](x)[label=left:$x$]{};
			\node[circle,inner sep=0.64mm,draw=black,fill=black](y)[right=of x,xshift=-0.4cm,yshift=0.5cm,label=above:$y$]{};
			\node[circle,inner sep=0.64mm,draw=black,fill=black](y1)[right=of x,xshift=-0.4cm,yshift=-0.5cm, label=below:$y_1~\text{or}~y_2$]{};
			\node[circle,inner sep=0.64mm,draw=black,fill=black](x1)[right=of y,xshift=-0.2cm,label=right:$x_1$]{};
			\node[circle,inner sep=0.64mm,draw=black,fill=black](x2)[right=of y1,xshift=-0.2cm,label=right:$x_2$]{};
			\draw[-](x)to(y);
			\draw[-](x)to(y1);
			\draw[-](y)to(x1);
			\draw[-](y)to(x2);
			\draw[-](y1)to(x1);
			\draw[-](y1)to(x2);
		\end{tikzpicture}
	    {\caption{Two graphs with smallest eigenvalue less than $-2$}\label{fig:1}}
	\end{figure}
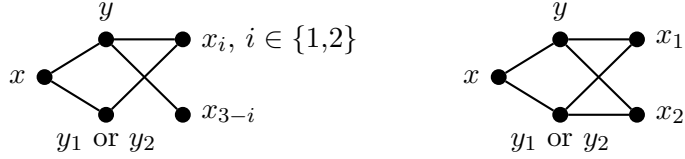
\vspace{-0.5cm}

	which have smallest eigenvalue less than $-2$ as an induced subgraph. 
	\item If both $y_1$ and $y_2$ are adjacent to neither $x_1$ nor $x_2$, then applying Claim \ref{cla: Dn like} to the adjacent vertices $x$ and $y_1$ again, we will find that there are two more new vertices $x_{1,1}$ and $x_{1,2}$ which are adjacent to $y_1$ in $\mathcal{S}(\mathfrak{g})$ and satisfy $N_{\mathfrak{g}}^{\mathrm{fat}}(x_{1,1})=N_{\mathfrak{g}}^{\mathrm{fat}}(x_{1,2})=N_{\mathfrak{g}}^{\mathrm{fat}}(x)$ in $\mathfrak{g}$. Note that $x,x_1,x_2,x_{1,1},x_{1,2}$ have the same fat neigbhor in $\mathfrak{g}$. This means that these five vertices are pairwise non-adjacent in $\mathcal{S}(\mathfrak{g})$ by \ref{eq non adjacent}. Now
if $y_2$ is adjacent to one of $x_{1,1}$ and $x_{1,2}$, then $\mathcal{S}(\mathfrak{g})$ contains an induced subgraph isomorphic to one of the graphs in Figure \ref{fig:1}, which has smallest eigenvalue less than $-2$. If $y_2$ is adjacent to neither $x_{1,1}$ nor $x_{1,2}$, then $\mathcal{S}(\mathfrak{g})$  contains the graph in Figure \ref{fig:2}
	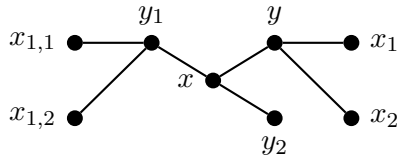
\begin{figure}[H]
		\centering
		\begin{tikzpicture}[thick][node distance=1cm,on grid]
			\node[circle,inner sep=0.64mm,draw=black,fill=black](x)[label=left:$x$]{};
			\node[circle,inner sep=0.64mm,draw=black,fill=black](y)[right=of x,xshift=-0.4cm,yshift=0.5cm,label=above:$y$]{};
			\node[circle,inner sep=0.64mm,draw=black,fill=black](y2)[right=of x,xshift=-0.4cm,yshift=-0.5cm,label=below:$y_2$]{};
			\node[circle,inner sep=0.64mm,draw=black,fill=black](x1)[right=of y,xshift=-0.2cm,label=right:$x_1$]{};
			\node[circle,inner sep=0.64mm,draw=black,fill=black](x2)[right=of y1,xshift=-0.2cm,label=right:$x_2$]{};
			\node[circle,inner sep=0.64mm,draw=black,fill=black](y1)[left=of x,xshift=0.4cm,yshift=0.5cm,label=above:$y_1$]{};
			\node[circle,inner sep=0.64mm](y3)[left=of x,xshift=0.4cm,yshift=-0.5cm]{};
			\node[circle,inner sep=0.64mm,draw=black,fill=black](x11)[left=of y1,xshift=0.2cm,label=left:$x_{1,1}$]{};
			\node[circle,inner sep=0.64mm,draw=black,fill=black](x12)[left=of y3,xshift=0.2cm,label=left:$x_{1,2}$]{};
			\draw[-](x)to(y);
			\draw[-](x)to(y1);
			\draw[-](x)to(y2);
			\draw[-](y)to(x1);
			\draw[-](y)to(x2);
			\draw[-](y1)to(x11);
			\draw[-](y1)to(x12);
		\end{tikzpicture}
	{\caption{A graph with smallest eigenvalue less than $-2$}\label{fig:2}}
	\end{figure}
\vspace{-0.5cm}

	with smalles eigenvalue less than $-2$ as an induced subgraph. 
\end{enumerate} 
This contradicts Lemma \ref{lem: interlacing} {\rm (ii)}, as $\mathcal{S}(\mathfrak{g})$ has smallest eigenvalue at least $-2$ by \eqref{eq eigenvalue of special graph}.

Hence every indecomposable factor of $\mathfrak{h}$ has an integral reduced representation of norm $3$. This completes the proof.
\end{proof}

\begin{proof}[\rm\textbf{Proof of Claim \ref{cla: Dn like}}]
Assume $N_{\mathfrak{g}}^{\mathrm{fat}}(x)={f_1}$ and $N_{\mathfrak{g}}^{\mathrm{fat}}(y)={f_2}$. Let $F=\bigcup_{w\in N_{\mathfrak{g}}^{\mathrm{slim}}(x)}N_{\mathfrak{g}}^{\mathrm{fat}}(w)-\{f_2\}$. Considering  $|V_{\mathrm{slim}}(\mathfrak{g})|\leq 36$ by \eqref{eq slim vertex number g}, we have
	\begin{equation}\label{size F}
		|F|\leq|\{w\mid w\in N_{\mathfrak{g}}^{\mathrm{slim}}(x)\}|-1\leq |V_{\mathrm{slim}}(\mathfrak{g})-\{x\}|-1\leq 34
	\end{equation}
	by \eqref{eq: g one fat} immediately. 
	
	Let $z_1,z_2,z_3\in V_{\mathrm{slim}}(\mathfrak{h})$ be three vertices. If $|N_{\mathfrak{h}}^{\mathrm{fat}}(z_1)\cap N_{\mathfrak{h}}^{\mathrm{fat}}(z_2)\cap N_{\mathfrak{h}}^{\mathrm{fat}}(z_3)|\geq2$, then $z_1,z_2$ and $z_3$ are pairwise adjacent in $\mathfrak{h}$, as they are in the quasi-clique with respect to a fat vertex in $N_{\mathfrak{h}}^{\mathrm{fat}}(z_1)\cap N_{\mathfrak{h}}^{\mathrm{fat}}(z_2)\cap N_{\mathfrak{h}}^{\mathrm{fat}}(z_3)$, which is actually a clique. Thus, the following Hoffman graph
	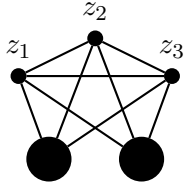
\begin{figure}[H]
		\centering
		\begin{tikzpicture}[thick][node distance=1cm,on grid]
			\node[circle,inner sep=0.64mm,draw=black,fill=black](x)[label=above:$z_1$]{};
			\node[circle,inner sep=0.64mm,draw=black,fill=black](y)[right=of x,xshift=-0.2cm,yshift=0.5cm,label=above:$z_2$]{};
			\node[circle,inner sep=0.64mm,draw=black,fill=black](z)[right=of y,xshift=-0.2cm,yshift=-0.5cm,label=above:$z_3$]{};
			\node[circle,inner sep=2mm,draw=black,fill=black](b1)[below=of x,xshift=0.4cm,yshift=0.3cm]{};
			\node[circle,inner sep=2mm,draw=black,fill=black](b2)[below=of z,xshift=-0.4cm,yshift=0.3cm]{};
			\draw[-](x)to(z);
			\draw[-](x)to(b1);
			\draw[-](x)to(b2);
			\draw[-](y)to(b1);
			\draw[-](y)to(b2);
			\draw[-](z)to(b1);
			\draw[-](z)to(b2);
			\draw[-](x)to(y);
			\draw[-](z)to(y);
		\end{tikzpicture}
	 {\caption{A Hoffman graph with smallest eigenvalue $-4$}\label{fig:}}
	\end{figure}
	\noindent with smallest eigenvalue $-4$ is an induced Hoffman subgraph of $\mathfrak{h}$, which contradicts $\lambda_{\min}(\mathfrak{h})\geq-3$ by Lemma \ref{hoff}. This leads us to obtain the following fact:
	
	\vspace{0.2cm}\noindent{\bf Fact 1}. For any two fat vertices $f_3$ and $f_4$ in $ V_{\mathrm{fat}}(\mathfrak{h})$,  $|N_{\mathfrak{h}}^{\mathrm{slim}}(f_3,f_4)|\leq2$ holds.
	
	\vspace{0.2cm}From Fact $1$ and \eqref{size F}, we obtain 
	\begin{equation}\label{eq: size F}
		|\bigcup\nolimits_{f\in F\cup\{f_1\}}N_{\mathfrak{h}}^{\mathrm{slim}}(f_2,f)|\leq 2(|F|+1)\leq 70.
	\end{equation}
	There is one more essential fact following from Lemma \ref{combi} {\rm (iv)} and \eqref{eq: g one fat}:
	
	\vspace{0.2cm}\noindent{\bf Fact 2}. Let $w_1\in V_{\mathrm{slim}}(\mathfrak{g})$ and $w_2\in V_{\mathrm{slim}}(\mathfrak{h})-V_{\mathrm{slim}}(\mathfrak{g})$ be two slim vertices and let $N_{\mathfrak{g}}^{\mathrm{fat}}(w_1)=\{f\}$. Then $w_1$ and $w_2$ are adjacent in $\mathfrak{h}$ if and only if $w_2\in N_{\mathfrak{h}}^{\mathrm{slim}}(f)$.
	
\vspace{0.2cm} We look at the set $N_{\mathfrak{h}}^{\mathrm{slim}}(y)$. By Fact $2$, a vertex $w_2\in V_{\mathrm{slim}}(\mathfrak{h})-V_{\mathrm{slim}}(\mathfrak{g})$ is in the set $N_{\mathfrak{h}}^{\mathrm{slim}}(y)$ if and only if  $w_2\in N_{\mathfrak{h}}^{\mathrm{slim}}(f_2)$. Then 
$k =|N_{\mathfrak{h}}^{\mathrm{slim}}(y)|
		=|N_{\mathfrak{g}}^{\mathrm{slim}}(y)|+|N_{\mathfrak{h}}^{\mathrm{slim}}(y)-N_{\mathfrak{g}}^{\mathrm{slim}}(y)|
	=|N_{\mathfrak{g}}^{\mathrm{slim}}(y)|+|\bigcup\nolimits_{f\in F\cup\{f_1\}}N_{\mathfrak{h}}^{\mathrm{slim}}(f_2,f)|+|N_{\mathfrak{h}}^{\mathrm{slim}}(f_2)-N_{\mathfrak{g}}^{\mathrm{slim}}(f_2)-\bigcup\nolimits_{f\in F\cup\{f_1\}}N_{\mathfrak{h}}^{\mathrm{slim}}(f_2,f)|
	\geq 106.$ From \eqref{eq slim vertex number g} and \eqref{eq: size F}, we easily obtain  $|N_{\mathfrak{g}}^{\mathrm{slim}}(y)|\leq 35$ and $|\bigcup\nolimits_{f\in F\cup\{f_1\}}N_{\mathfrak{h}}^{\mathrm{slim}}(f_2,f)|\leq 70$.  Thus, $|N_{\mathfrak{h}}^{\mathrm{slim}}(f_2)-N_{\mathfrak{g}}^{\mathrm{slim}}(f_2)-\bigcup\nolimits_{f\in F\cup\{f_1\}}N_{\mathfrak{h}}^{\mathrm{slim}}(f_2,f)|\geq1$, and we are able to find a vertex $z$ in $N_{\mathfrak{h}}^{\mathrm{slim}}(f_2)-N_{\mathfrak{g}}^{\mathrm{slim}}(f_2)-\bigcup\nolimits_{f\in F\cup\{f_1\}}N_{\mathfrak{h}}^{\mathrm{slim}}(f_2,f)$. 
	As $z\not\in N_{\mathfrak{h}}^{\mathrm{slim}}(f_1)$, we have $z\not\in N_{\mathfrak{h}}^{\mathrm{slim}}(x)$ by Fact $2$. Notice that both $x$ and $z$ are adjacent to $y$.  The distance between $x$ and $z$ in the slim graph $\Gamma$ of $\mathfrak{h}$ is $2$, and thus, 
	\begin{equation}\label{eq size of N(x,z)}
		9\leq c=|N_{\mathfrak{h}}^{\mathrm{slim}}(x,z)|=|(V_{\mathrm{slim}}(\mathfrak{h})-V_{\mathrm{slim}}(\mathfrak{g}))\cap N_{\mathfrak{h}}^{\mathrm{slim}}(x,z)|+|V_{\mathrm{slim}}(\mathfrak{g})\cap N_{\mathfrak{h}}^{\mathrm{slim}}(x,z)|. 
	\end{equation} 

  By Claim $2$, we have $(V_{\mathrm{slim}}(\mathfrak{h})-V_{\mathrm{slim}}(\mathfrak{g}))\cap N_{\mathfrak{h}}^{\mathrm{slim}}(x,z)=(V_{\mathrm{slim}}(\mathfrak{h})-V_{\mathrm{slim}}(\mathfrak{g}))\cap N_{\mathfrak{h}}^{\mathrm{slim}}(z,f_1)$. For any $w_1\in V_{\mathrm{slim}}(\mathfrak{g})$, if $w_1\in N_{\mathfrak{h}}^{\mathrm{slim}}(f_1)$, then $N_{\mathfrak{h}}^{\mathrm{fat}}(w_1)=\{f_1\}$ by \eqref{eq: g one fat}. Furthermore, $w_1$ and $z$ are not adjacent in $\mathfrak{h}$, as $z\not\in N_{\mathfrak{h}}^{\mathrm{slim}}(f_1)$. In other words, $w_1\not\in N_{\mathfrak{h}}^{\mathrm{slim}}(z)$. This means $	(V_{\mathrm{slim}}(\mathfrak{h})-V_{\mathrm{slim}}(\mathfrak{g}))\cap N_{\mathfrak{h}}^{\mathrm{slim}}(z,f_1)= N_{\mathfrak{h}}^{\mathrm{slim}}(z,f_1).$

Let $W=N_{\mathfrak{h}}^{\mathrm{slim}}(z,f_1)$. We will show $|W|\leq 6$. Since the quasi-clique with respect to the fat vertex $f_1$ in $\mathfrak{h}$ is a clique, we have $N_{\mathfrak{g}}^{\mathrm{slim}}(f_1)-\{x\}\subset N_{\mathfrak{h}}^{\mathrm{slim}}(x)$. Also by Fact 2, we have $N_{\mathfrak{h}}^{\mathrm{slim}}(f_1)-N_{\mathfrak{g}}^{\mathrm{slim}}(f_1)=N_{\mathfrak{h}}^{\mathrm{slim}}(x)-N_{\mathfrak{g}}^{\mathrm{slim}}(x)$. It follows that 
	\[
	\begin{split}
		|N_{\mathfrak{h}}^{\mathrm{slim}}(f_1)|&=|(N_{\mathfrak{h}}^{\mathrm{slim}}(f_1)-N_{\mathfrak{g}}^{\mathrm{slim}}(f_1))\cup N_{\mathfrak{g}}^{\mathrm{slim}}(f_1)|\\
		&=|(N_{\mathfrak{h}}^{\mathrm{slim}}(x)-N_{\mathfrak{g}}^{\mathrm{slim}}(x))\cup N_{\mathfrak{g}}^{\mathrm{slim}}(f_1)|\\
		&=|(N_{\mathfrak{h}}^{\mathrm{slim}}(x)\cup N_{\mathfrak{g}}^{\mathrm{slim}}(f_1))-(N_{\mathfrak{g}}^{\mathrm{slim}}(x)- N_{\mathfrak{g}}^{\mathrm{slim}}(f_1))|\\
		&\geq |N_{\mathfrak{h}}^{\mathrm{slim}}(x)\cup \{x\}|-|N_{\mathfrak{g}}^{\mathrm{slim}}(x)|\\
		&\geq k+1-35=k-34\geq c+41.
	\end{split}
	\]
As $|W|\leq |N_\mathfrak{h}^{\mathrm{slim}}(x,z)|\leq c$, there exists $W'\subset N_{\mathfrak{h}}^{\mathrm{slim}}(f_1)-W$ such that $|W'|=41$. Using the fact that the quasi-clique with respect to $f_1$ in $\mathfrak{h}$ is a clique again, we obtain that the quotient matrix of the subgraph of $\Gamma$ induced on $\{z\}\cup W\cup W'$ relative to the partition $\{\{z\},W, W'\}$ is the matrix  
	\[
	M:=\begin{pmatrix}
		0 & |W| &0\\
		1 & |W|-1 & 41\\ 
		0 & |W| & 40
	\end{pmatrix}.
	\]
By Lemma \ref{lem: interlacing} {\rm (ii)} and Lemma \ref{lem: interlacing} {\rm (i)},  the smallest eigenvalue of $M$ is at least $-3$. Thus $\det({M+3I})=-37|W|+258\geq 0$, that is, $|W|\leq 6$. By \eqref{eq size of N(x,z)}, we obtain 
\begin{equation}\label{eq N(x,z) in g} 
	|V_{\mathrm{slim}}(\mathfrak{g})\cap N_{\mathfrak{h}}^{\mathrm{slim}}(x,z)|\geq3.
	\end{equation}

For any $w\in V_{\mathrm{slim}}(\mathfrak{g})$, if $w\in N_{\mathfrak{h}}^{\mathrm{slim}}(x,z)$, then $N_{\mathfrak{h}}^{\mathrm{fat}}(w)\subseteq N_{\mathfrak{h}}^{\mathrm{fat}}(z)$ by Claim $2$, as $w$ and $z$ are adjacent in $\mathfrak{h}$. If $N_{\mathfrak{h}}^{\mathrm{fat}}(w)\neq\{f_2\}$, then $z\in \bigcup\nolimits_{f\in F\cup\{f_1\}}N_{\mathfrak{h}}^{\mathrm{slim}}(f_2,f)$, which contradicts the choice of $z$. So 
$N_{\mathfrak{h}}^{\mathrm{fat}}(w)=\{f_2\}$. This leads to $ V_{\mathrm{slim}}(\mathfrak{g})\cap N_{\mathfrak{h}}^{\mathrm{slim}}(x,z)=V_{\mathrm{slim}}(\mathfrak{g})\cap N_{\mathfrak{g}}^{\mathrm{slim}}(x,f_2)$. By \eqref{eq N(x,z) in g}, $|V_{\mathrm{slim}}(\mathfrak{g})\cap N_{\mathfrak{g}}^{\mathrm{slim}}(x,f_2)|\geq 3$ follows. 
	
%
%
	Our claim follows immediately, if we choose two different vertices distinct from $y$ in the set $V_{\mathrm{slim}}(\mathfrak{g})\cap N_{\mathfrak{g}}^{\mathrm{slim}}(x,f_2)$. This completes the proof of Claim \ref{cla: Dn like}.
\end{proof}

\begin{proof}[\rm\textbf{Proof of Theorem \ref{1 integrability}}]
	By Theorem \ref{c equals k or k-1}, only the case where $c\leq k-2$ should be considered. As $k-c\geq 2$, $n-k-1>2$ follows. Now let $K_3=\max\{K_1(3),K_4,106\}$, where $K_1(3)$ and $K_4$ are such that Theorem \ref{kgyy} and Theorem \ref{Hoffman graph} hold respectively. By Theorem \ref{Hoffman graph integrable}, there exists a Hoffman graph $\mathfrak{h}$ with  $\Gamma$ as its slim graph which has an integral reduced representation of norm $3$. Lemma \ref{rela} tells that $\mathfrak{h}$ also has an integral representation of norm $3$. It follows immediately that $\Gamma$ is $1$-integrable.   
	\end{proof}

\begin{proof}[\rm\textbf{Proof of Theorem \ref{new}}]
Let $K_2=\max\{K_3,11\}$, where $K_3$ is such that Theorem \ref{1 integrability} holds. It follows from Theorem \ref{c equals k or k-1}, Theorem  \ref{c equals 9} and Theorem \ref{1 integrability} immediately.
\end{proof}

\section*{Acknowledgements}

Q. Yang is partially supported by the Fellowship of China Postdoctoral Science Foundation (No. 2020M671855).

B. Gebremichel is supported by the Chinese Scholarship Council at USTC, China.

J.H. Koolen is partially supported by the National Natural Science Foundation of China (No. 12071454), Anhui Initiative in Quantum Information Technologies (No. AHY150000) and the National Key R and D Program of China (No. 2020YFA0713100).

We are also grateful to Mr.~Kiyoto Yoshino for his comments.
\bibliographystyle{plain}
\bibliography{KGRYY}

\end{document}